\begin{document}

\title{String Topology and the Based Loop Space}

\author{Eric J. Malm}
\address{Eric J. Malm, Simons Center for Geometry and Physics, Stony Brook University, Stony Brook, NY 11794-3636, USA}
\email{emalm@scgp.stonybrook.edu}
\thanks{Partly supported by a National Defense Science and Engineering Graduate Fellowship.}

\date{\today}

\begin{abstract}
For $M$ a closed, connected, oriented manifold, we obtain the Batalin-Vilkovisky (BV) algebra of its string topology through homotopy-theoretic constructions on its based loop space. In particular, we show that the Hochschild cohomology of the chain algebra $C_* \Omega M$ carries a BV algebra structure isomorphic to that of the loop homology $\mathbb{H}_*(LM)$. Furthermore, this BV algebra structure is compatible with the usual cup product and Gerstenhaber bracket on Hochschild cohomology. To produce this isomorphism, we use a derived form of Poincar\'e duality with $C_*\Omega M$-modules as local coefficient systems, and a related version of Atiyah duality for parametrized spectra connects the algebraic constructions to the Chas-Sullivan loop product.
\end{abstract}

\maketitle

\section{Introduction}\label{ch:intro}

\stepcounter{subsection}

String topology, as initiated by Chas and Sullivan in their 1999 paper~\cite{chas-sullivan:1999}, is the study of algebraic operations on $H_*(LM)$, where $M$ is a closed, smooth, oriented $d$-manifold and $LM = \Map(S^1, M)$ is its space of free loops. They show that, because $LM$ fibers over $M$ with fiber the based loop space $\Omega M$ of $M$, $H_*(LM)$ admits a graded-commutative \term{loop product}
\[
  \circ: H_p(LM) \otimes H_q(LM) \to H_{p + q - d}(LM)
\]
of degree $-d$. Geometrically, this loop product arises from combining the intersection product on $H_*(M)$ and the Pontryagin or concatenation product on $H_*(\Omega M)$. Writing $\mathbb{H}_*(LM) = H_{* + d}(LM)$ to regrade $H_*(LM)$, the loop product makes $\mathbb{H}_*(LM)$ a graded-commutative algebra. Chas and Sullivan describe this loop product on chains in $LM$, but because of transversality issues they are not able to construct the loop product on all of $C_*LM$ this way. Cohen and Jones instead give a homotopy-theoretic description of the loop product in terms of a ring spectrum structure on a generalized Thom spectrum $LM^{-TM}$. 

$H_*(LM)$ also admits a degree-$1$ operator $\Delta$ with $\Delta^2 = 0$, coming from the $S^1$-action on $LM$ rotating the free loop parameterization. Furthermore, the interaction between $\Delta$ and $\circ$ makes $\mathbb{H}_*(LM)$ a Batalin-Vilkovisky (BV) algebra, or, equivalently, an algebra over the homology of the framed little discs operad. Consequently, it is also a Gerstenhaber algebra, an algebra over the homology of the (unframed) little discs operad, via the loop product $\circ$ and the \term{loop bracket} $\{-, -\}$, a degree-$1$ Lie bracket defined in terms of $\circ$ and $\Delta$.

Such algebraic structures arise in other mathematical contexts. For example, if $A$ is a differential graded algebra, its Hochschild homology $HH_*(A)$ has a degree-$1$ Connes operator $B$ with $B^2 = 0$, and its Hochschild cohomology $HH^*(A)$ is a Gerstenhaber algebra under the Hochschild cup product $\cup$ and the Gerstenhaber Lie bracket $[-, -]$. Consequently, it is natural to ask whether these constructions recover some of the structure of string topology for a choice of algebra $A$ related to $M$. Two algebras that arise immediately as candidates are $C^*M$, the differential graded algebra of cochains of $M$ under cup product, and $C_*\Omega M$, the algebra of chains on the based loop space $\Omega M$ of $M$, with product induced by the concatenation of based loops.

In the mid-1980s, Goodwillie and Burghelea and Fiedorowicz independently developed the first result of this form~\cite{burghelea-fiedorowicz:1986, goodwillie:1985}, showing an isomorphism between $H_*(LX)$ and $HH_*(C_*\Omega X)$ for a connected space $X$ that takes $\Delta$ to the $B$ operator. Shortly after this result, Jones used a cosimplicial model for $LM$ to show an isomorphism between $H^*(LX)$ and $HH_*(C^*X)$ when $X$ is simply connected, taking a cohomological version of the $\Delta$ operator to $B$~\cite{jones:1987}. 

With the introduction of string topology, similar isomorphisms relating the loop homology $\mathbb{H}_*(LM)$ of $M$ to the Hochschild cohomologies $HH^*(C^*M)$ and $HH^*(C_*\Omega M)$ were developed. One such family of isomorphisms arises from variations on the Jones isomorphism, and so also requires $M$ to be simply connected. More closely reflecting the Burghelea-Fiedorowicz--Goodwillie perspective, Abbaspour, Cohen, and Gruher~\cite{abbaspour-cohen-gruher:2008} instead show that, if $M$ is a $K(G, 1)$ manifold for $G$ a discrete group, then there is an isomorphism of graded algebras between $\mathbb{H}_*(LM)$ and $H^*(G, kG^c)$, the group cohomology of $G$ with coefficients in the group ring $kG$ with the conjugation action. Vaintrob~\cite{vaintrob:2007} notes that this is also isomorphic to $HH^*(kG)$ and shows that, when $k$ is a field of characteristic $0$, $HH^*(kG)$ admits a BV structure isomorphic to that of string topology. 

Our main result is a generalization of this family of results, replacing the group ring $kG$ with the chain algebra $C_*\Omega M$. When $M = K(G, 1)$, $\Omega M \simeq G$, so $C_*\Omega M$ and $kG = C_*G$ are equivalent algebras.

\begin{thm}\label{thm:main-st-hh}
Let $k$ be a commutative ring, and let $X$ be a $k$-oriented, connected Poincar\'e duality space of dimension $d$. Poincar\'e duality, extended to allow $C_*\Omega X$-modules as local coefficients, gives a sequence of weak equivalences inducing an isomorphism of graded $k$-modules
\[
  D: HH^*(C_*\Omega X) \to HH_{* + d}(C_*\Omega X).
\]
Pulling back $-B$ along $D$ gives a degree-$1$ operator $-D^{-1}BD$ on $HH^*(C_*\Omega X)$. This operator interacts with the Hochschild cup product to make $HH^*(C_*\Omega X)$ a BV algebra, where the induced bracket coincides with the usual bracket on Hochschild cohomology.

When $X$ is a manifold as above, the composite of $D$ with the Goodwillie isomorphism between $HH_*(C_*\Omega X)$ and $H_*(LX)$ gives an isomorphism $HH^*(C_*\Omega X) \isom \mathbb{H}_*(LX)$ taking this BV algebra structure to that of string topology.
\end{thm}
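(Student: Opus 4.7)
The plan is to reinterpret both sides of $D$ as (co)homology of $X$ with twisted $C_*\Omega X$-module coefficients, construct $D$ via a derived form of Poincaré duality at that intermediate stage, and then leverage the resulting derived Poincaré-duality structure on $C_*\Omega X$ to obtain the BV algebra structure and identify it with string topology. Write $A = C_*\Omega X$ and let $A^{ad}$ denote $A$ equipped with the adjoint action, i.e.\ conjugation by loops. Using bar resolutions together with a change-of-rings argument (in the spirit of the Serre fibration $\Omega X \to LX \to X$), I would first identify
\[
  HH_*(A) \simeq H_*(X; A^{ad}) \quad \text{and} \quad HH^*(A) \simeq H^*(X; A^{ad}).
\]
Under these identifications, the Connes operator $B$ corresponds to the $S^1$-rotation of the loop fiber, and the Hochschild cup product corresponds to an $A^{ad}$-valued cup product on $X$ induced by the algebra multiplication in $A$. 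Since $X$ is a $k$-oriented Poincaré duality space of dimension $d$, I would then extend the standard Poincaré duality with local coefficients to allow arbitrary $A$-module coefficients: the orientation class $[X] \in H_d(X;k)$ lifts to a chain-level capping operation against $A^{ad}$-valued cochains, producing a natural zig-zag of weak equivalences realizing the isomorphism $D \colon HH^*(A) \to HH_{*+d}(A)$.

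The main obstacle is showing that $\Delta := -D^{-1} B D$ makes $(HH^*(A), \cup, \Delta)$ a BV algebra whose induced bracket recovers the usual Gerstenhaber bracket. The existence of $D$ packages $A$ as a derived Poincaré-duality algebra, so I would apply a derived version of Tradler's theorem: $D$ transports the cup product on $HH^*(A)$ to the natural cap-type action of $HH^*(A)$ on $HH_*(A)$, under which the deviation of $B$ from being a derivation is tautologically governed by the Connes coproduct. Pulling that identity back through $D$ then yields the BV relation for $\Delta$ and identifies the derived bracket with the Gerstenhaber bracket on $HH^*(A)$. The technical heart is verifying chain-level compatibility of $D$ with $\cup$ and $B$ through the zig-zag of quasi-isomorphisms defining it; signs and higher homotopy coherences must be tracked carefully, as one cannot expect strict Frobenius symmetry and must instead use the homotopy-coherent data furnished by the orientation.

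Finally, when $X = M$ is a smooth closed oriented manifold, I would compose $D$ with the Goodwillie isomorphism $HH_*(C_*\Omega M) \cong H_*(LM)$, which intertwines $B$ with the $S^1$-rotation operator on $H_*(LM)$, so that the two BV operators agree tautologically. To see that the cup product corresponds to the Chas--Sullivan loop product, I would invoke the parametrized Atiyah duality advertised in the abstract for the fibration $LM \to M$: this presents the Cohen--Jones Thom ring spectrum $LM^{-TM}$ as a parametrized avatar of the Hochschild cochain construction on $C_*\Omega M$, so that the Pontryagin-plus-intersection loop product matches the Hochschild cup product under $D$. Combined with the operator match from the Goodwillie isomorphism, this yields the desired isomorphism of BV algebras.
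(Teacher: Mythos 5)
Your high-level plan matches the paper's structure almost exactly: identify $HH_*(A)$ and $HH^*(A)$ with (co)homology of $X$ with coefficients in the adjoint module $A^{ad}$, produce $D$ via a derived Poincar\'e duality with $C_*\Omega X$-module coefficients, then feed the resulting ``derived Frobenius'' structure into an algebraic BV argument and compare to the Cohen--Jones/Cohen--Klein spectrum-level picture of the loop product. Two points in the middle of your argument are, however, where the paper spends most of its technical effort, and they are glossed over in your sketch.

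First, the identification $HH_*(A) \simeq H_*(X; A^{ad})$ and $HH^*(A) \simeq H^*(X; A^{ad})$ is not a formal change of rings. That identification is classical for $A = kG$ with $G$ discrete because $kG$ is a \emph{strict} cocommutative Hopf algebra; but $C_*G$ for a topological group $G$ (and in particular $C_*\Omega X$) only satisfies the antipode identity up to chain homotopy. Consequently, the pullback module $\ad_0^* A^e$ and the topologically natural adjoint module $\Ad(G)$ are not literally isomorphic as $A$-modules, and one must interpolate via an explicit $A_\infty$-module quasi-isomorphism built inductively by acyclic models (using higher Eilenberg--Zilber/Alexander--Whitney homotopies). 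Without this step, the purported isomorphism between $CH^*(A)$ and $\Hom_A(B(k,A,A), \Ad(G))$ is not established, and the entire $D$ construction is incomplete. Second, the passage from ``derived Poincar\'e duality'' to a BV structure is not automatic from Frobenius-like symmetry. The paper reduces to the precise algebraic criterion needed for Ginzburg's argument (as sign-corrected by Menichi): that $D$ is literally cap product against a class $z_H \in HH_d(A)$ satisfying $B(z_H) = 0$. You invoke ``a derived version of Tradler's theorem'' without naming a concrete reference or verifying these hypotheses; the paper instead derives $D(f) = \pm z_H \cap f$ from its cap-pairing description, proves $B(z_H)=0$ using the $S^1$-equivariance of the constant-loop inclusion $M \hookrightarrow LM$, and then runs the Tamarkin--Tsygan calculus identities ($L_a = [B,i_a]$, $i_{[a,b]} = \pm[L_a,i_b]$, etc.). A minor further point: the BV operator one obtains on $\mathbb{H}_*(LM)$ under this dictionary is $-\Delta$, not $\Delta$ --- the paper tracks this sign carefully (citing Tamanoi) and you should not assert that the operators ``agree tautologically'' without addressing it.
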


We produce the $D$ isomorphism in Theorem~\ref{thm:hh-additive-isom}, and we establish the BV algebra structure on $HH^*(C_*\Omega M)$ and its relation to the string topology BV algebra in Theorems~\ref{thm:bv-struct-hh}, and~\ref{thm:bv-isom-hh-string-top}. Since the $D$ isomorphism ultimately comes from Poincar\'e duality with local coefficients, this result also allows us to see more directly that the Chas-Sullivan loop product comes from the intersection product on the homology of $M$ with coefficients taken in $C_*\Omega M$ with the loop-conjugation action.

We summarize the structure of the rest of this document. In Section~\ref{ch:bg}, we provide background and preliminary material for our comparison of string topology and Hochschild homology. We state the basic properties of the singular chains $C_*X$ of a space $X$, including the algebra structure when $X$ is a topological monoid. We also develop the notions of $\Ext$, $\Tor$, and Hochschild homology and cohomology over a differential graded algebra $A$ in terms of a model category structure on the category of $A$-modules, and we use two-sided bar constructions as models for this homological algebra. Via Rothenberg-Steenrod constructions, we relate these algebraic constructions to the topological setting. Additionally, we state the key properties of the loop product $\circ$ and BV operator $\Delta$ in string topology, and we survey previous connections between the homology of loop spaces and Hochschild homology and cohomology. Finally,  we review the extended or ``derived'' Poincar\'e duality we use above, which originates in work of Klein~\cite{klein:1999b} and of Dwyer, Greenlees, and Iyengar~\cite{dwyer-greenlees-iyengar:2006}. In this setting, we broaden the notion of local coefficient module for $M$ to include modules over the DGA $C_*\Omega M$, instead of simply modules over $\pi_1 M$, and we show that Poincar\'e duality for $\pi_1 M$-modules implies Poincar\'e duality for this wider class of coefficients.  

In Section~\ref{ch:hh}, we relate the Hochschild homology and cohomology of $C_*\Omega M$ to this extended notion of homology and cohomology with local coefficients, where the coefficient module is $C_*\Omega M$ itself with an action coming from loop conjugation. In fact, there are several different models of this adjoint action that are convenient to use in different contexts, and in order to switch between them we must employ some technical machinery involving morphisms of $A_\infty$-modules between modules over an ordinary DGA. In any case, this result combines with Poincar\'e duality to establish the isomorphism $D$ above, coming from a sequence of weak equivalences on the level of chain complexes.

Section~\ref{ch:bv} relates the BV structure of the string topology of $M$ to the algebraic structures present on the Hochschild homology and cohomology of $C_*\Omega M$. In order to do so, we must engage with a spectrum-level, homotopy-theoretic description of the Chas-Sullivan loop product. We show that the Thom spectrum $LM^{-TM}$ and the topological Hochschild cohomology of $S[\Omega M]$, the suspension spectrum of $\Omega M$, are equivalent as ring spectra, using techniques in fiberwise spectra from Cohen and Klein~\cite{cohen-klein:2009}. We recover the chain-level equivalences established earlier by smashing with the Eilenberg-Mac Lane spectrum $Hk$ and passing back to the equivalent derived category of chain complexes over $k$. 

We then show that the pullback $-D^{-1}BD$ of the $B$ operator to $HH^*(C_* \Omega M)$ forms a BV algebra structure on $HH^*(C_* \Omega M)$, and that this structure coincides with that of string topology. We do this by establishing that $D$ is in fact given by a Hochschild cap product against a fundamental class $z \in HH_d(C_* \Omega M)$, for which $B(z) = 0$. These two conditions allow us to apply an algebraic argument of Ginzburg~\cite{ginzburg:2006}, with some sign corrections by Menichi~\cite{menichi:2009a}, to establish this BV algebra structure.  

Appendix~\ref{app:algebra} contains some conventions regarding chain complexes, coalgebras, and adjoint actions for modules over Hopf algebras. It also contains  our working definitions of $A_\infty$ algebras and $A_\infty$ modules and how they relate to two-sided bar constructions and $\Ext$ and $\Tor$. 

This document contains results from the author's Stanford University Ph.D.\ thesis, and we wish to thank our advisor, Ralph Cohen, for his insight, guidance, and patience. We also wish to thank Gunnar Carlsson, John Klein, and Dennis Sullivan for many useful discussions.

\section{Background and Preliminaries}\label{ch:bg}

\subsection{Singular Chain Complex Conventions}

We briefly state our conventions regarding simplicial objects and singular chain complexes. Let $\Delta$ denote the \term{simplicial category}, with objects ordered sequences $[n] = \{ 0 < 1 < \dotsb < n \}$ and morphisms order-preserving maps. Let $\Delta^{\bullet}: \Delta \to \Top$ be the canonical cosimplicial space of geometric simplices, with coface maps $d^i$ and codegenerancy maps $s^i$.

Throughout, $k$ will denote a fixed commutative ring, and $\Ch(k)$ the category of unbounded chain complexes of $k$-modules. For a given topological space $X$, let $S_*X$ denote the singular complex of $X$, with face maps $d_i$ and degeneracy maps $s_i$. Applying the Dold-Kan correspondence yields $C_*(X; k)$, the \term{normalized singular chain complex of $X$}, with total differential $d = \sum_{i = 0}^n (-1)^i d_i$ the signed sum of the face maps $d_i$. When $k$ is understood, we also write it as $C_*X$. Note that $C_*(\text{pt}) \isom k$.

The classical Eilenberg-Zilber equivalences relate $C_*(X \times Y)$ and $C_*(X) \otimes C_*(Y)$:

\begin{defn}
Define the \term{Alexander-Whitney map} $AW: C_*(X \times Y) \to C_*X \otimes C_*Y$ by
\[
  AW(\phi) = \sum_{i = 0}^n d_{n-i+1} \dotsm d_{n-1} d_n \pi_X\phi \otimes d_0^{n-i} \pi_Y\phi.
\]
Similarly, define the \term{Eilenberg-Zilber map} $EZ: C_*X \otimes C_*Y \to C_*(X \times Y)$ on simplices $\rho: \Delta^n \to X$ and $\sigma: \Delta^m \to Y$ by
\[
  EZ(\rho \otimes \sigma) = \sum_{(\mu, \nu) \in S_{n, m}} (-1)^{\epsilon(\mu, \nu)} (s_{\nu(n)} \dotsm s_{\nu(1)} \rho, s_{\mu(m)} \dotsm s_{\mu(1)} \sigma),
\]
where $S_{n,m}$ is the set of $(n, m)$-shuffles in $S_{n + m}$, and $\epsilon: S_{n + m} \to \{ \pm 1 \}$ is the sign homomorphism. Note that the $\mu(i)$ and $\nu(j)$ values together range from $0$ through $n + m - 1$.
\end{defn}

See \cite[p.~55]{felix-halperin-thomas:2001} for standard facts about $EZ$ and $AW$, including naturality, associativity, and compatibility with the symmetry maps $t$ in $\Top$ and $\tau$ in $\Ch(k)$. The other key property of these maps is that they are homotopy equivalences: in fact, $AW \circ EZ = \id$, and $EZ \circ AW$ is homotopic to $\id$ by a natural chain homotopy $H$.

It is then standard 
that for any space $X$, $C_*X$ is a counital differential graded coalgebra (DGC) via $\Delta = AW \circ C_*\delta$, and if $(X, m)$ is a topological monoid, then $\mu = C_*m \circ EZ$ makes $C_*X$ a differential graded Hopf algebra.

\subsection{Topological Groups and Singular Complexes}

We focus extensively on constructions involving $\Omega X$, the based loop space of $X$, and consequently want to have models for it that are compatible with the chain-level and spectrum-level constructions we will employ. $\Omega X$ is an $A_\infty$-monoid with a homotopy inverse, and, following Klein~\cite{klein:2001}, we will use a homotopy equivalent topological group model for $\Omega X$. Burghelea and Fiedorowicz \cite[p.~311]{burghelea-fiedorowicz:1986} have one such construction utilizing the Moore loops $MX$ of $X$ and May's categorical two-sided bar construction~\cite{may:1972}, and the Kan loop group $\tilde{G}_\bullet(K)$ of a simplicial set $K$~\cite{goerss-jardine:1999,kan:1958b} is another related construction.

Suppose $G$ is a general topological group, with inverse map $i: G \to G$. From above, $C_*G$ is a DG Hopf algebra, and we discuss the role of $C_*i$ as an antipode map.

\begin{prop} \label{prop:inverse-almost-antipode}
Let $S = C_*i$. Then $S$ is an algebra anti-automorphism of $C_*G$, $S^2 = \id$, and $\Delta (\id \otimes S) \mu \simeq \eta \epsilon$ (similarly for $S \otimes \id$).
\begin{proof}
The anti-automorphism identity, $S \circ \mu = \mu \circ \tau \circ (S \otimes S)$, follows from $i \circ m = m \circ t \circ (i \times i)$ and $C_*t \circ EZ = EZ \circ \tau$. Since $i^2 = \id$, $(C_*i)^2 = C_* i^2 = \id$, so $C_*i$ is an involution of $C_*G$.

Finally, the antipode diagrams for $\id \otimes S$ (shown below) and $S \otimes \id$ commute up to chain homotopy, using the chain homotopy $H$ from $EZ \circ AW$ to $\id$:
\[
	\xymatrix{
  	& & C_*G^{\otimes 2} \ar[rr]^{\id \otimes S} & & C_*G^{\otimes 2} \ar[dr]^{EZ} \\
	& C_*(G \times G) \ar[ur]^{AW} \ar[rr]^{C_*(\id \times i)} & & C_*(G \times G) \ar[ur]^{AW} \ar[rr]^{\id} \ar[dr]^{C_*m} & & C_*(G \times G) \ar[dl]_{C_*m} \\
	C_*G \ar[ur]^{C_*\Delta} \ar[rr]^{\epsilon} & & k \ar[rr]^{\eta} & & C_*G \\
	}
\]
\end{proof}
\end{prop}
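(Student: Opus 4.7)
The plan is to translate three successively-weaker identities on the topological group $G$ into chain-level identities on $C_*G$, using functoriality of $C_*$ together with the naturality and compatibility properties of $EZ$ and $AW$ recalled in the previous subsection. The only failure of strictness will come from $EZ \circ AW$ being only chain-homotopic, rather than equal, to the identity, and this will be exactly what forces the antipode condition to hold only up to homotopy.

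First, for the anti-automorphism statement, I would start from the group identity $i \circ m = m \circ t \circ (i \times i)$, which encodes $(gh)^{-1} = h^{-1} g^{-1}$. Applying $C_*$ gives $C_*i \circ C_*m = C_*m \circ C_*t \circ C_*(i \times i)$. Precomposing with $EZ$ and using both the naturality of $EZ$ (to pull $C_*(i \times i)$ through as $C_*i \otimes C_*i$) and the stated compatibility $C_*t \circ EZ = EZ \circ \tau$, one rewrites the right-hand side as $\mu \circ \tau \circ (S \otimes S)$, while the left-hand side becomes $S \circ \mu$. The involution property $S^2 = \id$ is then immediate from functoriality of $C_*$ applied to $i^2 = \id_G$.

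The substantive step, and the main obstacle, is the antipode relation $\mu \circ (\id \otimes S) \circ \Delta \simeq \eta\epsilon$. On $G$ the relation $m \circ (\id \times i) \circ \delta = c_e \circ \epsilon_G$ (where $c_e: \mathrm{pt} \to G$ picks out the identity and $\epsilon_G: G \to \mathrm{pt}$) holds strictly, and applying $C_*$ gives a strict equality of chain maps. The issue is that the coproduct on $C_*G$ is $\Delta = AW \circ C_*\delta$ and the product is $\mu = C_*m \circ EZ$, so forming $\mu \circ (\id \otimes S) \circ \Delta$ inserts an $EZ \circ AW$ in the middle. Concretely, by naturality of $EZ$, $EZ \circ (\id \otimes C_*i) = C_*(\id \times i) \circ EZ$, so
\[
\mu \circ (\id \otimes S) \circ \Delta = C_*m \circ C_*(\id \times i) \circ (EZ \circ AW) \circ C_*\delta.
\]
Replacing $EZ \circ AW$ by $\id$ would give $C_*(m \circ (\id \times i) \circ \delta) = \eta \circ \epsilon$ on the nose; what one actually has is a chain homotopy $H: EZ \circ AW \simeq \id$, and conjugating $H$ by the flanking maps yields the desired chain homotopy. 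The diagram drawn in the proof is exactly a record of this argument, with the only non-commuting square being the identity vs. $EZ \circ AW$ square, which is filled by $H$. The argument for $S \otimes \id$ is identical, starting from $m \circ (i \times \id) \circ \delta = c_e \circ \epsilon_G$.
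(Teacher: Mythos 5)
Your proof is correct and follows essentially the same approach as the paper: translate strict group-level identities through $C_*$, and observe that the only obstruction to a strict antipode is the insertion of $EZ \circ AW$ between $\Delta$ and $\mu$, which is filled by the homotopy $H$. The only cosmetic difference is that you slide the chain-homotopy correction past $C_*(\id \times i)$ using naturality of $EZ$, while the paper's diagram uses naturality of $AW$ to do the same.
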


We will use these structures in Section~\ref{ch:hh} to express the Hochschild homology and cohomology of $C_*\Omega X$ in terms of $\Ext$ and $\Tor$ over $C_*\Omega X$ itself.

\subsection{Model Categories and Homological Algebra}\label{sec:dgha}

\subsubsection{Model Categories of $A$-modules}

As stated above, we seek to express various features of string topology in terms of the homological algebra of $A$-modules for various choices of DGAs $A$. Consequently, we would like to have a suitable model structure on the category $\modulecategory{A}$ of $A$-modules. In fact, when $A = C_*G$ for $G$ a topological monoid, $A$ is cofibrant in the projective model structure on chain complexes of $k$-modules, and so by results of Schwede and Shipley~\cite[Lemma~2.3]{schwede-shipley:2000} its category of modules has a cofibrantly generated model category structure, which we cover quickly below.

Recall~\cite[\S2.2]{hovey:1999} that $\Ch(k)$, the category of unbounded chain complexes over $k$, has a cofibrantly generated model structure with the generating cofibrations the inclusions $i_n: S^{n - 1} \to D^n$ and the generating trivial cofibrations the $j_n: 0 \to D^n$. Here, $S^n$ is $k$ in degree $n$ and $D^n$ is $\id: k \to k$ in degrees $n$ and $n - 1$. Let $I_A$ and $J_A$ be the images of $I$ and $J$ under the endofunctor $F_A = A \otimes -$. Using the basic properties of monoidal model categories to check the hypotheses of the lemma yields the following result.

\begin{prop}
Suppose that $A$ is a cofibrant object in $\Ch(k)$. Then $\modulecategory{A}$ has a cofibrantly generated model category structure with $I_A$ as the set of generating cofibrations and $J_A$ as the set of generating trivial cofibrations. A morphism in $\modulecategory{A}$ is a weak equivalence or a fibration if the underlying morphism of chain complexes is one.
\end{prop}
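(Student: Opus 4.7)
The plan is to verify the hypotheses of Kan's lifting/transfer theorem for cofibrantly generated model structures (the form used by Schwede and Shipley in their Lemma~2.3) applied to the free--forgetful adjunction
\[
  F_A = A \otimes (-) : \Ch(k) \rightleftarrows \modulecategory{A} : U.
\]
We want to declare a morphism of $A$-modules to be a weak equivalence or fibration precisely when $Uf$ is one in $\Ch(k)$, and we want $I_A = F_A(I)$ and $J_A = F_A(J)$ to serve as generating (trivial) cofibrations. By the transfer principle, the existence of the model structure reduces to two conditions: first, that the domains of $I_A$ and $J_A$ are small relative to the $I_A$-cell and $J_A$-cell complexes respectively; and second, that every relative $J_A$-cell complex is a weak equivalence in $\Ch(k)$.

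The smallness condition is essentially automatic: the domains $A \otimes S^{n-1}$ and $A \otimes 0 = 0$ of generators in $I_A$ and $J_A$ are small in $\Ch(k)$ (indeed, every object in $\Ch(k)$ is small with respect to the whole category), and the forgetful functor $U$ preserves the filtered colimits along which smallness is tested. So I would dispense with this quickly by citing the analogous smallness statement for $\Ch(k)$ recorded in Hovey.

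The main step is verifying that relative $J_A$-cell complexes are weak equivalences in $\Ch(k)$. Here I would exploit the hypothesis that $A$ is cofibrant. The key observation is that because $\Ch(k)$ is a monoidal model category satisfying the pushout-product axiom, tensoring with the cofibrant object $A$ sends the generating trivial cofibrations $j_n : 0 \to D^n$ to trivial cofibrations $A \otimes j_n$ in $\Ch(k)$ (apply pushout-product of $0 \to A$ with $j_n$). Next, since the forgetful functor $U$ creates colimits, a pushout in $\modulecategory{A}$ of a generating element $A \otimes j_n$ along an arbitrary $A$-module map is, after forgetting, the same as the corresponding pushout in $\Ch(k)$, where it is again a trivial cofibration because trivial cofibrations are closed under pushout and transfinite composition. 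Iterating, every relative $J_A$-cell complex has underlying map a trivial cofibration in $\Ch(k)$, and in particular a weak equivalence.

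The main obstacle is precisely this last step, ensuring that the pushouts of free module maps of the form $A \otimes j_n$ stay weakly contractible after forgetting --- essentially a miniature form of the monoid axiom of Schwede--Shipley, which here follows cleanly from cofibrancy of $A$ plus the pushout-product axiom. Once this is established, the characterization of cofibrations as retracts of relative $I_A$-cell complexes, the description of fibrations and trivial fibrations via lifting against $J_A$ and $I_A$, and the two-out-of-three and retract closure properties for weak equivalences all transfer formally along $U$. This yields the cofibrantly generated model structure on $\modulecategory{A}$ with the stated generators and with weak equivalences and fibrations detected on underlying chain complexes.
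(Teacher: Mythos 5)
Your proof is correct and takes essentially the same route as the paper: the paper's argument is simply to cite Schwede--Shipley's Lemma~2.3 (the transfer/lifting theorem) and remark that its hypotheses follow from the monoidal model category axioms, and you have carried out exactly that verification, including the key step where cofibrancy of $A$ together with the pushout-product axiom shows that $A \otimes j_n$ is a trivial cofibration in $\Ch(k)$, whence relative $J_A$-cell complexes are weak equivalences after forgetting to $\Ch(k)$.
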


This model category structure on $\modulecategory{A}$ is also related to the notion of semifree extensions and $A$-semifree modules, as presented by F\'elix, Halperin, and Thomas~\cite[\S 2]{felix-halperin-thomas:1995}. In particular, the semifree extensions coincide with the $I_A$-cellular maps, and cofibrations coincide with retracts of these semifree extensions. In particular, many  results about semifree extensions also apply to cofibrations, and specialize as follows:

\begin{prop}\label{prop:cof-pres-wk-equivs}
Suppose $P$ and $Q$ are cofibrant (left or right, as is appropriate) $A$-modules.
\begin{compactenum}[(\itshape a\/\upshape)]
	\item $P \otimes_A -$ and $\Hom_A(P, -)$ are exact functors from $\modulecategory{A}$ to $\Ch(k)$ and preserve all weak equivalences in $\modulecategory{A}$.
	
	\item If $f: P \to Q$ is a weak equivalence in $\modulecategory{A}$, then $f \otimes_A M$ and $\Hom_A(f, M)$ are also weak equivalences for all $A$-modules $M$.
\end{compactenum}
\end{prop}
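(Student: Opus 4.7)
The plan is to leverage the identification, already noted in the text, of cofibrant $A$-modules as retracts of semifree modules in the sense of F\'elix, Halperin, and Thomas. A semifree module $P$ admits an exhaustive filtration $0 = P_{-1} \subset P_0 \subset P_1 \subset \dotsb$ whose subquotients $F_i = P_i/P_{i-1}$ are direct sums of shifts $\Sigma^{n_\alpha} A$. Since the properties in question---exactness and preservation of weak equivalences---are stable under retracts, it suffices to treat the case where $P$ (and $Q$ in part~(b)) is semifree. The principal obstacle I anticipate is the passage through the filtration at limit stages: for $\otimes_A$ this is routine because filtered colimits of quasi-isomorphisms in $\Ch(k)$ are again quasi-isomorphisms, whereas for $\Hom_A$ it demands a Mittag-Leffler-style argument controlling $\varprojlim^1$.

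For part~(a), each short exact sequence $0 \to P_{i-1} \to P_i \to F_i \to 0$ splits as graded $A$-modules, so it remains exact after applying $-\otimes_A M$ or $\Hom_A(-, M)$ for any $A$-module $M$. Because $F_i$ is free, $F_i \otimes_A -$ and $\Hom_A(F_i, -)$ reduce, up to shifts and (co)products, to the identity functor on $\Ch(k)$ and thus manifestly preserve both exact sequences and weak equivalences. Induction on $i$ using the resulting long exact sequences in homology and the five lemma then shows that each $P_i \otimes_A -$ and $\Hom_A(P_i, -)$ has both properties. The functor $P \otimes_A -$ is the filtered colimit of the $P_i \otimes_A -$, so it inherits them. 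For $\Hom_A(P, M) = \varprojlim_i \Hom_A(P_i, M)$, the restriction maps $\Hom_A(P_i, M) \to \Hom_A(P_{i-1}, M)$ are surjective in each degree because $P_{i-1} \hookrightarrow P_i$ splits as graded $A$-modules; hence $\varprojlim^1$ vanishes, homology commutes with the tower limit, and exactness of short exact sequences is preserved under the inverse limit.

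For part~(b), take a weak equivalence $f: P \to Q$ between cofibrant modules and an arbitrary $A$-module $M$, and choose a cofibrant replacement $q: \tilde{M} \to M$. Part~(a) applied to $P$ and to $Q$ shows that $P \otimes_A q$ and $Q \otimes_A q$ are weak equivalences, so $f \otimes_A M$ is a weak equivalence if and only if $f \otimes_A \tilde{M}$ is. The latter is the image of the weak equivalence $f$ between cofibrant objects under the left Quillen functor $- \otimes_A \tilde{M}$, so Ken Brown's lemma supplies it. For $\Hom_A(f, M)$ the situation is even simpler: every object of $\modulecategory{A}$ is fibrant in this model structure, and $\Hom_A(-, M)$ carries cofibrations to surjections and trivial cofibrations to trivial fibrations, so the contravariant form of Ken Brown's lemma applies immediately, with no need to replace $M$.
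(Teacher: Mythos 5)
Your proposal takes essentially the route the paper delegates to F\'elix--Halperin--Thomas: reduce by retract stability to the semifree case and induct up a filtration with free graded subquotients. The $\otimes_A$ half of part~(a) and the exactness statement for $\Hom_A$ are sound. The gap is in the claim that degreewise surjectivity of the tower $\Hom_A(P_i, M) \to \Hom_A(P_{i-1}, M)$ implies that ``homology commutes with the tower limit.'' It does not: surjectivity of the transition maps gives only the Milnor exact sequence
\[
0 \to \textstyle\varprojlim^1 H_{n+1}\Hom_A(P_i, M) \to H_n\Hom_A(P, M) \to \varprojlim H_n\Hom_A(P_i, M) \to 0,
\]
and the left-hand $\varprojlim^1$ of homology groups need not vanish. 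The conclusion you want --- that a weak equivalence $g: M \to N$ induces a weak equivalence on $\Hom_A(P,-)$ --- is still true, but you must compare the two Milnor sequences for $M$ and $N$ and invoke the five lemma, using that each $\Hom_A(P_i, g)$ is a quasi-isomorphism and hence induces isomorphisms on both the $\varprojlim$ and $\varprojlim^1$ of the homology towers. As written, the justification for this step is incorrect even though the statement holds.

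For part~(b) your Ken Brown arguments are valid, but more elaborate than needed: since every object of $\modulecategory{A}$ is fibrant, a weak equivalence $f: P \to Q$ between cofibrants is an $A$-linear homotopy equivalence, and both $- \otimes_A M$ and $\Hom_A(-, M)$ carry $A$-linear chain homotopies to $k$-linear ones, so $f \otimes_A M$ and $\Hom_A(f, M)$ are chain homotopy equivalences outright for any $M$, with no replacement or Quillen-functor considerations required. This homotopy-equivalence mechanism is exactly what the paper exploits in the neighboring Proposition~\ref{prop:cof-wk-equiv-htpy-equiv}. One further point worth acknowledging: you implicitly assume the semifree filtration is $\omega$-indexed, whereas cofibrant objects are retracts of possibly transfinitely filtered $I_A$-cell complexes; the inductive and limit arguments do extend, but the hypothesis deserves a remark.
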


With these foundations in place, $\Ext_A^*$ and $\Tor_A^*$ arise naturally as the homology groups of the derived versions of $\otimes_A$ and $\Hom_A$. To be specific, let $Q$ and $R$ denote cofibrant- and fibrant-replacement functors for $\modulecategory{A}$. The the left derived tensor product $\otimes_A^L$ and right derived $\Hom$ complex $R\Hom_A$ are given by $M \otimes_A^L N = QM \otimes_A QN$ and $R\Hom_A(M, P) = \Hom_A(QM, RP)$. By the convenient properties of cofibrant $A$-modules, it suffices to replace only one of $M$ and $N$ when forming $M \otimes_A^L N$.

Since fibrations of $A$-modules are precisely the levelwise surjections, every $A$-module is fibrant (so $\id$ may be taken as $R$). Since weak equivalences between fibrant-cofibrant objects are homotopy equivalences, and since $Q$ preserves weak equivalences, we have the following:

\begin{prop}\label{prop:cof-wk-equiv-htpy-equiv}
If $M$, $N$ are cofibrant $A$-modules connected by a zigzag of weak equivalences, there is a homotopy equivalence $h: M \to N$ homotopic to this zigzag in $\Ho \modulecategory{A}$.
\begin{proof}
$Q$ lifts this zigzag to one of homotopy equivalences, and the maps $QM \to M$ and $QN \to N$ are also homotopy equivalences.
\end{proof}
\end{prop}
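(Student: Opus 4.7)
The plan is to exploit the observation, noted in the preceding paragraph, that every object of $\modulecategory{A}$ is fibrant, so that cofibrant $A$-modules such as $M$ and $N$ are automatically bifibrant. The general model-categorical principle I will invoke is that a weak equivalence between bifibrant objects admits a two-sided homotopy inverse, so any zigzag of such weak equivalences composes to a single homotopy equivalence in $\Ho \modulecategory{A}$. The one subtlety is that the intermediate terms of the given zigzag need not themselves be cofibrant, so cofibrant replacement must be applied first.

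Concretely, write the given zigzag as $M = X_0 \leftarrow X_1 \to X_2 \leftarrow \cdots \to X_n = N$ and apply the cofibrant-replacement functor $Q$ termwise. This yields a parallel zigzag of weak equivalences connecting $QM$ to $QN$ in which every vertex is cofibrant, and hence bifibrant by the remark above. Each arrow of this new zigzag is therefore an honest homotopy equivalence of $A$-modules, and composing these arrows (together with chosen homotopy inverses for the backward-pointing ones) produces a homotopy equivalence $g: QM \to QN$. By naturality of $Q$, the class of $g$ in $\Ho \modulecategory{A}(QM, QN)$ corresponds, under the canonical bijection with $\Ho \modulecategory{A}(M, N)$, to the class of the original zigzag.

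To descend to a genuine map $M \to N$, note that the trivial fibrations $p_M: QM \to M$ and $p_N: QN \to N$ admit sections $s_M, s_N$ by lifting against the cofibrations $0 \to M$ and $0 \to N$. Since $p_M$ and $p_N$ are weak equivalences between bifibrant objects they are themselves homotopy equivalences, and a two-out-of-three argument identifies $s_M, s_N$ as their two-sided homotopy inverses. Setting $h = p_N \circ g \circ s_M$ then gives the required homotopy equivalence $M \to N$. The main work lies in verifying that the class of $h$ in $\Ho \modulecategory{A}$ really does agree with that of the original zigzag, which amounts to bookkeeping of the naturality of $Q$ and of the homotopies chosen at each stage; once that is accepted, everything else reduces to standard facts about bifibrant objects in the model structure supplied by the previous proposition.
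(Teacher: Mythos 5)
Your proposal is correct and is essentially the paper's argument, spelled out in more detail: both apply the cofibrant-replacement functor $Q$ to replace the zigzag by one between bifibrant objects (hence honest homotopy equivalences), and both invoke the fact that $QM \to M$ and $QN \to N$ are themselves homotopy equivalences to transport the resulting map back to one from $M$ to $N$. The extra care you take in choosing explicit sections $s_M, s_N$ and tracking the identification in $\Ho \modulecategory{A}$ is a welcome elaboration of what the paper leaves implicit, but it is the same route.
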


\subsubsection{Bar constructions, \texorpdfstring{$\Ext$}{Ext}, and \texorpdfstring{$\Tor$}{Tor}}

Bar constructions often provide a convenient construction of cofibrant replacements, and hence of complexes representing $\Ext$ and $\Tor$, and are therefore used extensively in the literature~\cite{burghelea-fiedorowicz:1986,felix-menichi-thomas:2005,goodwillie:1985,jones:1987,loday:1992}. We state our conventions regarding them, which are general enough to perform in a general monoidal category.

\begin{defn}
Let $(\category{C}, \otimes, I)$ be a monoidal category and let $(A,\mu, \eta)$ be a monoid in $\category{C}$. Given a right $A$-module $M$ and a left $A$-module $N$, define the \term{(two-sided) bar construction} to be the simplicial $\category{C}$-object $B_\bullet(M, A, N)$, with $B_n(M, A, N) = M \otimes A^{\otimes n} \otimes N$, $k \geq 0$. The face and degeneracy maps $d_i$ and $s_i$ are given by
\begin{align*}
  s_i &= \id^{\otimes i + 1} \otimes \eta \otimes \id^{\otimes n + 1 - i}, &
  d_i &= \begin{cases}
          a_M \otimes \id^n, & i = 0, \\
		  \id^{\otimes i} \otimes \mu \otimes \id^{\otimes n - i}, & 1 \leq i \leq n - 1, \\
		  \id^{\otimes n} \otimes a_N, & i = n.
         \end{cases} 
\end{align*}
\end{defn}

When $\category{C}$ is $\Ch(k)$ and $A$ is a DGA, then $m[a_1 \mid \dotsb \mid a_n]n$ is often written for a simple element $m \otimes a_1 \otimes  \dotsb \otimes a_n \otimes n \in B_n(M, A, N)$. Each $B_n(M, A, N)$ is itself a chain complex, with the differential given by the Leibniz rule (with appropriate Koszul signs introduced).

We pass from this simplicial object to a single chain complex via an algebraic version of geometric realization, following Shulman~\cite[\S 12]{shulman:2006}.

\begin{defn}\label{defn:geom-real-chain-cx}
Let $X_\bullet$ be a simplicial chain complex, and let $\Delta^\bullet$ denote the standard cosimplicial simplicial set of simplices, with $\Delta^n_m = \Delta([m], [n])$. Then $N(k\Delta^\bullet)$ is the cosimplicial chain complex obtained by applying the Dold-Kan normalization functor $N$ levelwise. Define the \term{geometric realization} of $X_\bullet$ to be the coend $|X_\bullet| = N(k\Delta^\bullet) \otimes_{\Delta^{\op}} X_\bullet$, and the \term{thick realization} of $X_\bullet$ to be $\| X_\bullet \| = k\Delta^\bullet \otimes_{\Delta^{\op}} X_\bullet$.

Define $B(M, A, N) = \| B_\bullet(M, A, N) \|$ and $\bar{B}(M, A, N) = | B_\bullet(M, A, N) |$.
\end{defn}

In fact, treating $B_\bullet(M, A, N)$ as a chain complex of chain complexes $B_*(M, A, N)$ via the Moore complex differential, $d_s = \sum_{i = 0}^k (-1)^i d_i$, $B(M, A, N)$ is isomorphic to the total complex $\Tot(B_*(M, A, N))$.

Furthermore, since $A \otimes -$ commutes with this coend, if $M$ is also a left $A$-module, then so is $B(M, A, N)$. In particular, $B(A, A, N)$ is a left $A$-module, with a natural trivial fibration $q_N$ mapping to $N$. If $N$ is also a cofibrant chain complex, $B(A, A, N)$ is a cofibrant $A$-module, and so these bar constructions give combinatorial models for $\Ext$ and $\Tor$:
\[
  \Ext_A^*(M, N) \isom H_*(\Hom_A(B(A, A, M), N))
  \quad \text{and} \quad
  \Tor^A_*(P, M) \isom H_*(B(P, A, M))
\]

\subsubsection{Hochschild Homology and Cohomology}

Hochschild homology and cohomology provide an algebraic homology and cohomology theory for bimodules over a DGA $A$. We formulate them in terms of $\Ext$ and $\Tor$ over $A^e = A \otimes A^{\op}$, the enveloping algebra of $A$. $A$-$A$-bimodules canonically have both left and right $A^e$-module structures.

\begin{defn}
Given a DGA $A$ and an $A$-$A$-bimodule $M$, define the \term{Hochschild homology} of $A$ with coefficients in $M$ to be $HH_*(A, M) = \Tor^{A^e}_*(M, A)$, treating $M$ canonically as a right $A^e$-module and $A$ as a left $A^e$-module. Similarly, define the \term{Hochschild cohomology} of $A$ with coefficients in $M$ to be $HH^*(A, M) = \Ext_{A^e}^*(A, M)$, where $M$ is now canonically a left $A^e$-module.

When $M = A$, we write $HH_*(A)$ for $HH_*(A, A)$ and $HH^*(A)$ for $HH^*(A, A)$.
\end{defn}

From above, $B(A, A, A)$ is an $A$-$A$-bimodule, and hence canonically a left $A^e$-module. Since $A$ is assumed to be cofibrant in $\Ch(k)$, $B(A, A, A)$ is a cofibrant $A^e$-module, weakly equivalent to $A$.

\begin{defn}
Define the \term{Hochschild (co)chains} of $A$ with coefficients in $M$ to be
\[
  CH_*(A, M) = M \otimes_{A^e} B(A, A, A)
  \quad \text{and} \quad
  CH^*(A, M) = \Hom_{A^e}(B(A, A, A), M).
\]
\end{defn}

In the literature, $CH_n(A, M)$ is often expressed as $M \otimes A^{\otimes n}$, which is isomorphic to $M \otimes_{A^e} B_n(A, A, A)$. In this form, when $M = A$, the cyclic permutation of the $n + 1$ factors of $A$ gives rise to a degree-$1$ operation $B$ on $HH_*(A)$, originally due to Connes~\cite{loday:1992}.

The Hochschild cohomology $HH^*(A)$ also admits natural operations, first due to Gerstenhaber~\cite{gerstenhaber:1963}. The homogeneous complex $\Hom_{A^e}(B_n(A, A, A), A)$ is naturally isomorphic to $\Hom_k(A^{\otimes n}, A)$. Given a $p$-cochain $f: A^{\otimes p} \to A$ and a $q$-cochain $g: A^{\otimes q} \to A$, their cup product $f \cup g$ is $\mu \circ (f \otimes g): A^{\otimes (p + q)} \to A$, and descends to a graded-commutative cup product on $HH^*(A)$. This product is equivalent to the Yoneda product on $R\Hom_{A^e}(A, A)$. Additionally, a commutator-like construction gives a degree-$1$ Lie bracket $[f,g]: A^{\otimes (p + q - 1)} \to A$, and this bracket interacts with the cup product to make $HH^*(A)$ a Gerstenhaber algebra.

Finally, $HH_*(A, M)$ is a right module for the algebra $HH^*(A)$, with the action coming from the action of $R\Hom_{A^e}(A, A)$ on $M \otimes_{A^e}^L A$. This action can also be formulated combinatorially on the cochain level, as with the $B$ operator, the cup product, and the bracket. When $M = A$, this cap product is part of a calculus structure on $(HH^*(A), HH_*(A))$ that formalizes the interaction of differential forms and (poly)vector fields on a manifold~\cite{tamarkin-tsygan:2005}. 

\subsubsection{Rothenberg-Steenrod constructions}

In the case where $A = C_*G$ for $G$ a topological monoid, F\'elix et al.~\cite{felix-halperin-thomas:1995} determine several results which generalize the Rothenberg-Steenrod spectral sequence~\cite[\S 7.4]{mccleary:2001}\cite{rothenberg-steenrod:1965} to equivalences of chain complexes and of differential graded coalgebras, phrased in terms of bar constructions. The main results of consequence to us are as follows:

\begin{thm}
For any path connected space $X$, $C_*X$ and the bar construction $B(k, C_*\Omega X, k)$ are weakly equivalent as differential graded coalgebras.
\end{thm}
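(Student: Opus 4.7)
The plan is to realize $X$ as the classifying space of a topological group model of $\Omega X$, and then compare chain-level and topological bar constructions via the Eilenberg-Zilber maps. First I would replace $\Omega X$ by a weakly equivalent topological group $G$, using either the Kan loop group or the Moore-loop / May bar construction discussed above, so that $X$ is weakly equivalent to $BG$, modeled as the geometric realization of the simplicial space $B_\bullet(\text{pt}, G, \text{pt})$ whose $n$-th level is $G^n$.

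Applying $C_*$ levelwise yields a simplicial chain complex, which I would compare to $B_\bullet(k, C_*G, k)$, whose $n$-th level is $(C_*G)^{\otimes n}$. The iterated Eilenberg-Zilber shuffle map $EZ^{(n)}: (C_*G)^{\otimes n} \to C_*(G^n)$ is a quasi-isomorphism, and naturality of $EZ$ together with the fact that $C_*G$ is a DGA via $C_*m \circ EZ$ ensure that these maps commute with the face and degeneracy operators, producing a morphism of simplicial chain complexes. Passing to geometric realization --- equivalently, totalizing the associated double complex --- then yields a quasi-isomorphism $B(k, C_*G, k) \to C_*BG \simeq C_*X$, which handles the underlying chain equivalence.

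To lift this to a weak equivalence of differential graded coalgebras, I would compare the coproducts on each side. On $C_*X$ the coproduct is $AW \circ C_*\delta$ for the topological diagonal $\delta$; on $B(k, C_*G, k)$ the coproduct is built from the shuffle diagonal of the simplicial bar together with the DGC structure on $C_*G$ coming from $AW$. Compatibility under the realized $EZ$ map then reduces to the naturality and coassociativity of $AW$, the identity $AW \circ EZ = \id$, and the fact that $C_*G$ is a differential graded Hopf algebra (so the bar differential, which uses the product $\mu$, is compatible with the coproduct).

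The principal obstacle is precisely this coalgebra compatibility: $EZ$ is only a homotopy coalgebra map (while $AW$ is a strict coalgebra map), so the realized map is a DGC morphism only up to coherent homotopy. Accordingly, the theorem will not be realized by a single strict DGC map but rather by a zig-zag of DGC quasi-isomorphisms, following the approach of F\'elix-Halperin-Thomas~\cite{felix-halperin-thomas:1995} in which the requisite coherence data --- and thus the acceptability of a zig-zag in place of a strict map --- is made explicit.
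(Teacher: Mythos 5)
The paper does not actually prove this theorem; it cites it directly from F\'elix--Halperin--Thomas~\cite{felix-halperin-thomas:1995}, presenting it as one of the ``main results of consequence'' from their generalization of the Rothenberg--Steenrod spectral sequence. So there is no internal proof to compare against verbatim. That said, your sketch is consistent with how the paper handles the closely related comparison $B(C_*Y, C_*G, C_*X) \to C_*(B(Y, G, X))$ via levelwise Eilenberg--Zilber maps (see the proposition following Definition~\ref{defn:ad-module-top}), and you correctly identify the heart of the difficulty: $EZ$ is a strict DGA map but only a homotopy DGC map, so a single strict DGC morphism $B(k, C_*\Omega X, k) \to C_*X$ is too much to hope for and one must settle for a zig-zag with coherence data, which is precisely what F\'elix--Halperin--Thomas supply. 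Two small points you gloss over: (i) passing from the simplicial chain complex $C_*(B_\bullet(*,G,*))$ to $C_*(|B_\bullet(*,G,*)|)$ requires the standard comparison between the total complex of a simplicial chain complex and chains on the realization, which the paper handles by citing Goodwillie~\cite[\S V.1]{goodwillie:1985}; and (ii) replacing $\Omega X$ by a topological group $G$ introduces another zig-zag of DGC quasi-isomorphisms relating $C_*X$ to $C_*BG$ and $C_*\Omega X$ to $C_*G$, which should be acknowledged explicitly. Your description of the coproduct on $B(k, C_*G, k)$ as built from ``the shuffle diagonal ... together with the DGC structure on $C_*G$'' is somewhat imprecise --- the relevant coproduct comes from the simplicial diagonal on $B_\bullet(*,G,*)$ transported through $AW$ --- but this does not affect the correctness of the overall strategy, and in any case the details belong to the cited reference.
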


\begin{prop} \label{prop:chains-borel-construction}
Let $G$ be a topological group and let $F$ be a right $G$-space. Then $C_*(F \times_G EG)$ and $B(C_*F, C_*G, k)$ are weakly equivalent as differential graded coalgebras. 
\end{prop}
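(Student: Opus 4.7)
The plan is to model the Borel construction simplicially and transport this model through normalized chains. Specifically, I would start from the standard simplicial space $B_\bullet(F, G, *)$ with $B_n(F, G, *) = F \times G^n$ and the usual bar-construction face and degeneracy maps. Its (thick) geometric realization is weakly equivalent to $F \times_G EG$: the simplicial space is proper (degeneracies are cofibrations), so the thick and thin realizations agree up to weak equivalence, and $|B_\bullet(*, G, *)| \simeq BG$ classifies principal $G$-bundles, with $|B_\bullet(F, G, *)| \to |B_\bullet(*, G, *)|$ the associated $F$-bundle. Hence $C_*(F \times_G EG) \simeq C_*|B_\bullet(F, G, *)|$ as DG coalgebras via the natural Alexander-Whitney diagonal.

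The next step is to pass from topological realization to chain-level realization. Iterated Eilenberg-Zilber gives natural quasi-isomorphisms
\[
  EZ_n : C_*F \otimes C_*G^{\otimes n} \longrightarrow C_*(F \times G^n) = C_* B_n(F, G, *),
\]
and these assemble into a map of simplicial chain complexes $B_\bullet(C_*F, C_*G, k) \to C_* B_\bullet(F, G, *)$ once one checks that the bar face maps on the left (using the right $C_*G$-action on $C_*F$ induced from $F \times G \to F$, the multiplication on $C_*G$, and the augmentation $C_*G \to k$) correspond under $EZ$ to the face maps induced by the topological bar structure. This follows from naturality of $EZ$, the associativity of $EZ$, and the definition of the DG Hopf algebra structure on $C_*G$ in Section 2.1. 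Passing to geometric realization of simplicial chain complexes and combining with the Eilenberg-Zilber-type equivalence $C_*|X_\bullet| \simeq \|C_*X_\bullet\|$ for proper simplicial spaces, one gets a zigzag of quasi-isomorphisms
\[
  B(C_*F, C_*G, k) = \|B_\bullet(C_*F, C_*G, k)\| \;\xrightarrow{\simeq}\; \|C_* B_\bullet(F, G, *)\| \;\xleftarrow{\simeq}\; C_*|B_\bullet(F, G, *)| \;\simeq\; C_*(F \times_G EG).
\]

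The last and main task is to upgrade this zigzag to one of differential graded coalgebras. The coalgebra structure on $C_*(F \times_G EG)$ is the standard AW-diagonal; on $B(C_*F, C_*G, k)$ it is the shuffle-style coproduct induced by the DGC structures of $C_*F$, $C_*G$, and $k$ along the cosimplicial $N(k\Delta^\bullet)$ direction. Compatibility comes from two ingredients: first, $AW$ is a strict DGC map, so on each $F \times G^n$ the topological diagonal decomposes under $AW$ into a tensor of the diagonals on $F$ and each factor of $G$; second, the shuffle coproduct on realizations arises precisely from the $AW$ of the cosimplicial simplex $\Delta^\bullet$ tensored with these levelwise diagonals. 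The usual natural chain homotopies $H$ from $EZ \circ AW$ to the identity then give the required higher-coherence on the zigzag, which, combined with Proposition \ref{prop:cof-wk-equiv-htpy-equiv}, produces an honest weak equivalence in the homotopy category of DG coalgebras.

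The principal obstacle is the coalgebra compatibility in the last step: the quasi-isomorphism of chain complexes is routine Eilenberg-Zilber bookkeeping, but keeping track of the diagonal through the realization and verifying that the shuffle coproduct on $B(C_*F, C_*G, k)$ matches the Alexander-Whitney coproduct on $C_*|B_\bullet(F, G, *)|$ requires the standard (but nontrivial) interplay between $EZ$, $AW$, and the cosimplicial structure of $\Delta^\bullet$, and is where one ultimately invokes the Félix-Halperin-Thomas machinery cited just above.
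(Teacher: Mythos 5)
Your chain-level argument matches the paper's own treatment of the closely related statement proved later in Section~\ref{ch:hh}: there, the map $EZ: B(C_*Y, C_*G, C_*X) \to C_*(B(Y, G, X))$ is shown to be a weak equivalence of chain complexes precisely by checking that $EZ_n$ commutes with the bar face and degeneracy maps and then passing from $\Tot C_*(E_\bullet)$ to $C_*|E_\bullet|$. For the present proposition, however, the paper does not supply a proof at all: it is stated as a consequence of the F\'elix--Halperin--Thomas Rothenberg--Steenrod machinery, which is where the DGC content actually lives.

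The genuine gap in your sketch is exactly where you predicted it would be, but the resolution you offer does not work. First, there is a directionality problem: the map that commutes strictly with the simplicial structure is $EZ$, but $EZ$ is \emph{not} a DGC morphism --- only $AW$ is, strictly. Conversely, $AW_n: C_*(F \times G^n) \to C_*F \otimes C_*G^{\otimes n}$ is a strict coalgebra map at each level, but it does \emph{not} commute with the bar face maps, because the multiplication $\mu = C_*m \circ EZ$ involves an $EZ$ that is only inverse to $AW$ up to homotopy ($EZ \circ AW \neq \id$). So whichever direction you pick, you lose either simplicial compatibility or coalgebra compatibility on the nose, and the natural chain homotopy $H$ only controls the defect up to a single homotopy, not the full coherence needed to produce a DGC zigzag. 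Second, Proposition~\ref{prop:cof-wk-equiv-htpy-equiv} cannot be invoked here: it is a statement about modules over a DGA that is cofibrant in $\Ch(k)$, relying on the fact that all objects are fibrant in the projective model structure. There is no such model structure on DG coalgebras in which that argument transfers, and the homotopy theory of DGCs is genuinely different from that of $A$-modules. To close this gap one really does need the F\'elix--Halperin--Thomas framework (their notion of weak equivalence of DGCs and their comparison of the bar and Borel constructions within it), which is what the paper's citation is for; your final sentence acknowledges this, but the intervening text reads as though the result is being proved from scratch, and the specific invocations of $AW$-strictness and Proposition~\ref{prop:cof-wk-equiv-htpy-equiv} do not fill the hole.
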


\subsection{String Topology and Hochschild Constructions}

\subsubsection{String Topology Operations}

We describe some of the conventions and fundamental operations in string topology. Let $M$ be a closed, smooth, $k$-oriented manifold of dimension $d$, and let $LM = \Map(S^1, M)$ be the space of free loops in $M$, taking $S^1 = \reals/\integers = \Delta^1/\partial \Delta^1$ as our model for $S^1$. 

For $M$ any space, note that $S^1$ acts on $LM$, with the action map $\rho: S^1 \times LM \to LM$ given by $\rho(t, \gamma)(s) = \gamma(s + t)$. Then $\rho$ induces a map
\[
  H_p(S^1) \otimes H_q(LM) \xrightarrow{\times} H_{p + q}(S^1 \times LM) \xrightarrow{\rho_*} H_{p + q}(LM).
\]
For $\alpha \in H_p(LM)$, define $\Delta(\alpha) = \rho_*([S^1] \times \alpha)$, where $[S^1] \in H_1(S^1)$ is the fundamental class of $S^1$ determined by the quotient map $\Delta^1 \to \Delta^1/\partial\Delta^1$. Then $\Delta$ is a degree-$1$ operator on $H_*(LM)$. Since degree considerations force $\mu_*([S^1] \times [S^1]) \in H_2(S_1)$ to be $0$, $\Delta^2$ is identically $0$.

With a different choice $[S^1]'$ for the fundamental class, so that $[S^1]' = \lambda[S^1]$ for $\lambda \in k^\times$, then the corresponding operator $\Delta'$ is $\lambda\Delta$. In particular, choosing the opposite orientation for the cycle $\Delta^1 \to \Delta^1/\partial\Delta^1$, $t \mapsto 1 - t$, yields the operator $-\Delta$. 

We postpone detailed discussion of the Chas-Sullivan loop product on $H_*(LM)$ until Section~\ref{ssec:cs-loop-product}, where we give a homotopy-theoretic construction using Thom spectra due to Cohen and Jones~\cite{cohen-jones:2002}. For now, we record that the loop product arises from a combination of the degree-$(-d)$ intersection product on $H_*(M)$ and of the Pontryagin product on $H_*(\Omega M)$ induced by concatenation of based loops. Consequently, the loop product also exhibits a degree shift of $-d$:
\[
  \circ: H_p(LM) \otimes H_q(LM) \to H_{p + q - d}(LM).
\]
In order that $\circ$ define a graded algebra structure, we shift $H_*(LM)$ accordingly:

\begin{defn}
Denote $\Sigma^{-d} H_*(LM)$ as $\mathbb{H}_*(LM)$, called the \term{loop homology} of $M$, so that $\mathbb{H}_q(LM) = H_{q + d}(LM)$. 
\end{defn}

Under this degree shift, $\Delta$ gives a degree-$1$ operator on $\mathbb{H}_*(LM)$. One of the key results of Chas and Sullivan is that $\circ$ and $\Delta$ interact to give a BV algebra structure on $\mathbb{H}_*(LM)$. This BV algebra structure gives a canonical Gerstenhaber algebra structure~\cite[Prop~1.2]{getzler:1994}, and the resulting Lie bracket, denoted $\{-, -\}$, is called the \term{loop bracket}. The loop bracket can also be defined more directly using operations on Thom spectra~\cite{cohen-hess-voronov:2006,tamanoi:2007b}.

\subsubsection{Relations to Hochschild Constructions}

There are already substantial connections between the homology and cohomology of the free loop space $LX$ of a space $X$ and the Hochschild homology and cohomology of the DGAs $C_*\Omega X$ and $C^*X$. We state the key results that we employ below and survey the other relevant results.

The main result we will use is due to Goodwillie~\cite[\S V]{goodwillie:1985} and Burghelea and Fiedorowicz~\cite[Theorem~A]{burghelea-fiedorowicz:1986}.

\begin{thm}\label{thm:bfg-iso}
For $X$ a connected space, there is an isomorphism $BFG: HH_*(C_*\Omega X) \to H_*LM$ of graded $k$-modules, such that $BFG \circ B = \Delta \circ BFG$. 
\end{thm}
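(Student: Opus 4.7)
The plan is to realize both sides as the homology of a common bar construction built from a strict topological group model for $\Omega X$. First, I would replace $\Omega X$ by a weakly equivalent topological group $G$ (via the Kan loop group, or May's two-sided bar rectification of Moore loops), so that $BG \simeq X$, $C_*G$ is DGA-equivalent to $C_*\Omega X$, and the strict inverse $i: G \to G$ yields the chain-level antipode of Proposition~\ref{prop:inverse-almost-antipode}. Second, I would identify $LX \simeq LBG$ with the adjoint Borel construction $EG \times_G G^{\mathrm{ad}}$, in which $G$ acts on itself by conjugation; the standard identification sends a free loop $\gamma \in LBG$ to a pair consisting of a basepoint lift $e \in EG$ of $\gamma(0)$ together with the holonomy $g \in G$ recorded by path-lifting $\gamma$ to $EG$. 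Proposition~\ref{prop:chains-borel-construction} applied with $F = G^{\mathrm{ad}}$ then yields a weak equivalence of differential graded coalgebras $C_*LX \simeq B(C_*G^{\mathrm{ad}}, C_*G, k)$.

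Third, I would construct a chain equivalence between $B(C_*G^{\mathrm{ad}}, C_*G, k)$ and the Hochschild complex $CH_*(C_*G) = C_*G \otimes_{(C_*G)^e} B(C_*G, C_*G, C_*G)$. Both are built from the graded module $C_*G \otimes (C_*G)^{\otimes \bullet}$ in simplicial degree $\bullet$, and the comparison map uses the coproduct and antipode $S$ to convert the Hochschild bimodule face $d_n(a_0 \mid \cdots \mid a_n) = (a_n a_0 \mid a_1 \mid \cdots \mid a_{n-1})$ into the right-conjugation action face of the bar construction. Equivalently, the adjoint Hopf-algebra trick gives an isomorphism of derived tensor products $A \otimes^L_{A^e} A \simeq A^{\mathrm{ad}} \otimes^L_A k$ that follows formally from the antipode axiom, and specializing to $A = C_*G$ identifies $HH_*(C_*G)$ with $H_*(B(C_*G^{\mathrm{ad}}, C_*G, k))$. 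Composing with step two yields the additive isomorphism $BFG$.

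Fourth, I would verify $BFG \circ B = \Delta \circ BFG$ by showing that the zigzag of steps two and three is compatible with the natural $S^1$-actions on both sides: the rotation action $\rho$ on $LX$ and the cyclic $S^1$-action on the Hochschild complex encoded by the Connes cyclic operator. Under the identification $LX \simeq EG \times_G G^{\mathrm{ad}}$, loop rotation corresponds to cyclic permutation of bar coordinates, and the fundamental class $[S^1] = [\Delta^1/\partial\Delta^1]$ used in the definition of $\Delta$ matches the generator of this cyclic structure, giving the equality without sign correction. The main obstacle is the third step: $C_*G$ is a differential graded Hopf algebra only up to coherent chain homotopy---the antipode axiom of Proposition~\ref{prop:inverse-almost-antipode} holds only up to the chain homotopy $H$---so the chain-level identification between the adjoint bar construction and the Hochschild complex is not a strict isomorphism but only a homotopy equivalence, and one must verify that this homotopy respects the cyclic operator inducing $B$. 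Burghelea and Fiedorowicz handle this by routing through the cyclic nerve $N^{\mathrm{cy}} G$, whose chains relate strictly to both complexes via iterated Eilenberg-Zilber maps; Goodwillie bypasses the difficulty with an $A_\infty$-level argument.
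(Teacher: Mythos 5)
The paper does not give its own proof of this theorem; it cites Goodwillie and Burghelea--Fiedorowicz, noting that their arguments ``rely on modeling the free loop space $LX$ as a cyclic bar construction on $\Omega X$, or a topological group replacement.'' Your reconstruction follows exactly that outline --- topological group replacement $G$ for $\Omega X$, the adjoint Borel model $LBG \simeq EG \times_G G^{\mathrm{ad}}$, the Rothenberg--Steenrod equivalence, and the passage from $B(C_*G^{\mathrm{ad}}, C_*G, k)$ to the Hochschild complex --- and you correctly flag the central technical point (the antipode of $C_*G$ satisfies the Hopf axiom only up to the chain homotopy $H$) that forces either the cyclic-nerve route of Burghelea--Fiedorowicz or $A_\infty$-level comparisons; the latter is precisely the machinery the paper develops in Section~\ref{ch:hh} (Theorem~\ref{thm:adjoint-cg-he}, Corollary~\ref{corol:cg-hh-isoms}, Proposition~\ref{prop:ez-aw-quasiiso}) for its own later purposes.
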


The proofs of this statement essentially rely on modeling the free loop space $LX$ as a cyclic bar construction on $\Omega X$, or a topological group replacement. Dually, Jones has shown that, for $X$ a simply connected space, $HH_*(C^*X) \isom H^*(LX)$, taking $B$ to a cohomological version of the $\Delta$ operator~\cite{jones:1987}. Jones's construction uses a cosimplicial model for $LX$, coming from the cyclic cobar construction on the space $X$ itself.  

In their homotopy-theoretic construction of string topology, Cohen and Jones modify this cyclic cobar construction to produce a cosimplicial model for the Thom spectrum $LM^{-TM}$ in terms of the manifold $M$ and the Atiyah dual $M^{-TM}$ of $M$~\cite{cohen-jones:2002}. Applying chains and Poincar\'e duality to this cosimplicial model yields an isomorphism
\[
  \mathbb{H}_*(LM) \isom HH^*(C^*M)
\]
of graded algebras, taking the loop product to the Hochschild cup product. As with Jones's earlier result, this isomorphism requires $M$ to be simply connected. When $k$ is a field of with $\fchar k = 0$, F\'elix and Thomas have identified a BV-algebra structure on $HH^*(C^*M)$ and have shown that it coincides with the string topology BV structure under this isomorphism~\cite{felix-thomas:2007}.

Koszul duality also provides a class of results relating the Hochschild cohomologies of different DGAs and hence providing other characterizations of string topology. In particular, F\'elix, Menichi, and Thomas have shown that, for $C$ a simply connected DGC with $H_*(C)$ of finite type, there is an isomorphism $HH^*(C^*) \isom HH^*(\Omega C)$ of Gerstenhaber algebras, where $C^*$ is the $k$-linear dual of $C$ and where $\Omega C$ is the cobar algebra of $C$~\cite{felix-menichi-thomas:2005}. When $C = C_*M$ for a simply connected manifold $M$, $C^* \isom C^*M$ and $\Omega C \simeq C_*\Omega M$, so $HH^*(C^*M) \isom HH^*(C_*\Omega M)$ as Gerstenhaber algebras. Combining this result with the isomorphism of Cohen and Jones gives an isomorphism of graded algebras $HH^*(C_*\Omega M) \isom \mathbb{H}_*(LM)$, again in the simply connected case.

Proceeding more directly from the $C_*\Omega M$ perspective above, Abbaspour, Cohen, and Gruher characterize the string topology of an aspherical $d$-manifold $M = K(G, 1)$ in terms of the group homology of the discrete group $G$~\cite{abbaspour-cohen-gruher:2008}. In particular, in this setting $G$ is a Poincar\'e duality group in the sense of group cohomology~\cite[{\S}VIII.10]{brown:1982}, and they establish a multiplication on the shifted group homology $H_{* + d}(G, kG^c)$, coming from a $G$-equivariant convolution product on $H^*(G; kG^c)$. They then show that this algebra is isomorphic to the graded algebra $\mathbb{H}_*(LM)$. By classical Hopf-algebra arguments, these group homology and cohomology groups are isomorphic to the Hochschild homology and cohomology of the group algebra $kG$. When $k$ is a field with $\fchar k = 0$, Vaintrob shows that $HH^*(kG)$ has a BV algebra structure and that this isomorphism is one of BV algebras~\cite{vaintrob:2007}.

Consequently, our main result Theorem~\ref{thm:main-st-hh} can be viewed as a generalization of these results to the case where $M$ is an arbitrary connected manifold and where $k$ is a general commutative ring for which $M$ is oriented.

\subsection{Derived Poincar\'e Duality}\label{ssec:pd}

\subsubsection{Derived Local Coefficients}

We will use these notions of $\Ext$ and $\Tor$ along with these combinatorial models to describe the string topology of $M$ in terms of modules over the algebra $C_*\Omega M$. Consequently, we discuss the development of homology and cohomology with these modules as local coefficients, focusing on a form of algebraic Poincar\'e duality for these modules. As mentioned above, this formulation has its roots in the notion of a Poincar\'e duality group, and our discussion more specifically originates in duality results of Klein~\cite{klein:2001} for topological groups and of Dwyer, Greenlees, and Iyengar~\cite{dwyer-greenlees-iyengar:2006} for connective ring spectra. 

We first generalize the notion of a local coefficient system on $X$. Suppose that $X$ is connected. Then by the Rothenberg-Steenrod constructions above, 
\[
  H_*(X) \isom H_*(B(k, C_*\Omega X, k)) = \Tor_*^{C_*\Omega X}(k, k).
\]
Moreover, the Borel construction $E(\Omega X) \times_{\Omega X} \pi_1 X$ provides a model for the universal cover $\tilde{X}$ of $X$, with the right action by $\pi_1X$. Consequently, $B(k, C_*\Omega X, k[\pi_1 X])$ provides a model for the right $k[\pi_1 X]$-module $C_*(\tilde{X}; k)$. Analogously, $B(k[\pi_1 X], C_*\Omega X, k)$ models $C_*(\tilde{X}; k)$ with the left $\pi_1 X$-action. 

Suppose that $E$ is a system of local coefficients in the usual sense, i.e., a right $k[\pi_1X]$-module. Under the morphism $C_*\Omega X \to H_0(\Omega X) = k[\pi_1 X]$ of DGAs, $E$ is a $C_*\Omega X$-mod\-ule, and
\begin{align*}
  C_*(X; E) &= E \otimes_{k[\pi_1 X]} C_*(\tilde{X}) \\
  & \simeq E \otimes_{k[\pi_1 X]} B(k[\pi_1 X], C_*\Omega X, k)
  \isom B(E, C_*\Omega X, k) \simeq E \otimes^L_{C_*\Omega X} k.
\end{align*}
Similarly, $C^*(X; E) \simeq R\Hom_{C_* \Omega X}(k, E)$, so passing to homology gives isomorphisms\[
  H_*(X; E) \isom \Tor_*^{C_*\Omega X}(E, k)
  \quad \text{and} \quad
  H^*(X; E) \isom \Ext^*_{C_*\Omega X}(k, E).
\]
Hence, $E \otimes^L_{C_*\Omega X} k$ and $R\Hom_{C_*\Omega X}(k, E)$ provide a generalization of homology and cohomology with local coefficients, where the coefficients are now $C_*\Omega X$-modules and where these theories take values in the derived category $\Ho Ch(k)$ of chain complexes over $k$.

\begin{defn}
For a $C_*\Omega X$-module $E$, let $H_\bullet(X; E) = E \otimes^L_{C_*\Omega X} k$, and let $H^\bullet(X; E) = R\Hom_{C_*\Omega X}(k, E)$. Let $H_*(X; E)$ and $H^*(X; E)$ denote their homologies.
\end{defn}

When $X$ is a Poincar\'e duality space, these ``derived'' versions of homology and cohomology satisfy a ``derived'' version of Poincar\'e duality:

\begin{thm}\label{thm:ext-tor-isom-from-pd}
Suppose $X$ is a $k$-oriented Poincar\'e duality space of dimension $d$. Let $z \in \Tor_d^{C_*\Omega X}(k, k)$ correspond to the fundamental class $[X] \in H_d(X)$. For $E$ a right $C_*\Omega M$-module, there is an evaluation map
\[
  \ev_{z, E}: H^\bullet(X; E) \to \Sigma^{-d} H_\bullet(X; E)
\]
that is a weak equivalence. On homology, this produces an isomorphism
\[
  H^*(X; E) \to H_{* + d}(X; E).
\]
When $E$ is a $k[\pi_1 X]$-module considered as a module over $C_*\Omega X$, this isomorphism coincides with the isomorphism coming from Poincar\'e duality for $X$ with local coefficients $E$. 
\end{thm}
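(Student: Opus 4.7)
The plan is to model $\ev_{z, E}$ explicitly as a chain-level cap product, verify the equivalence first for $E = k[\pi_1 X]$ by direct comparison with classical Poincar\'e duality, and then extend to arbitrary $C_*\Omega X$-modules by a Gorenstein-type argument, following the strategy developed for topological groups by Klein~\cite{klein:2001} and for connective ring spectra by Dwyer-Greenlees-Iyengar~\cite{dwyer-greenlees-iyengar:2006}.

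First I would use the bar-construction resolutions $B(k, C_*\Omega X, C_*\Omega X) \to k$ and $B(C_*\Omega X, C_*\Omega X, k) \to k$ as cofibrant replacements, so that $R\Hom_{C_*\Omega X}(k, E)$ and $E \otimes^L_{C_*\Omega X} k$ are represented by $\Hom_{C_*\Omega X}(B(k, C_*\Omega X, C_*\Omega X), E)$ and $B(E, C_*\Omega X, k)$ respectively. A cycle $\tilde z \in B(k, C_*\Omega X, k)$ representing $z$ can be obtained by pushing the singular fundamental cycle $[X] \in C_d X$ forward along the Rothenberg-Steenrod equivalence $C_*X \simeq B(k, C_*\Omega X, k)$. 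Cap product against $\tilde z$, constructed using the DG coalgebra structure on $B(k, C_*\Omega X, k)$ that models the diagonal on $C_*X$, then produces $\ev_{z, E}$ as a chain map natural in $E$, which manifestly carries weak equivalences in $E$ to weak equivalences.

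Next I would specialize to $E = k[\pi_1 X]$. Proposition~\ref{prop:chains-borel-construction} identifies $k[\pi_1 X] \otimes^L_{C_*\Omega X} k$ with $C_*\tilde X$ and $R\Hom_{C_*\Omega X}(k, k[\pi_1 X])$ with $C^*(X; k[\pi_1 X])$; under these identifications, $\ev_{z, k[\pi_1 X]}$ becomes the classical cap product with $[X]$ on cochains with local coefficients in $k[\pi_1 X]$, which is a weak equivalence by classical Poincar\'e duality. Since this identification is natural in the $\pi_1 X$-module, the same argument applies to any right $k[\pi_1 X]$-module regarded as a $C_*\Omega X$-module via restriction along the augmentation $C_*\Omega X \to k[\pi_1 X]$, establishing the final clause of the theorem.

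The main obstacle is the extension from $k[\pi_1 X]$-module coefficients to arbitrary right $C_*\Omega X$-modules. Here I would invoke that $X$, being a finite-dimensional Poincar\'e duality space, is dominated by a finite CW complex, so that $k$ is compact (perfect) in $\Ho \modulecategory{C_*\Omega X}$; this ensures that both sides of $\ev_{z, E}$ are cohomological functors of $E$ that preserve distinguished triangles and arbitrary coproducts. Since $C_*\Omega X$ generates $\Ho \modulecategory{C_*\Omega X}$, the full subcategory of $E$ on which $\ev_{z, E}$ is an equivalence is a localizing subcategory, and it suffices to check $E = C_*\Omega X$, i.e., to establish the Gorenstein-type equivalence
\[
  R\Hom_{C_*\Omega X}(k, C_*\Omega X) \simeq \Sigma^{-d} k.
\]
This is the main technical step: it can be extracted from the $E = k[\pi_1 X]$ case by a descent argument along the augmentation $C_*\Omega X \to k[\pi_1 X]$ exploiting the compactness of $k$, as carried out in detail by Klein and by Dwyer-Greenlees-Iyengar. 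Once this is in hand, the natural equivalence $R\Hom_{C_*\Omega X}(k, E) \simeq E \otimes^L_{C_*\Omega X} R\Hom_{C_*\Omega X}(k, C_*\Omega X) \simeq \Sigma^{-d}(E \otimes^L_{C_*\Omega X} k)$ for all $E$ follows from smallness of $k$, and a final naturality check identifies this composite with $\ev_{z, E}$.
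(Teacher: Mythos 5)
Your proposal is correct in substance, but it takes a genuinely different route from the paper. The paper proves the result by first establishing a general ``dualizing-element detection'' lemma (Theorem~\ref{thm:dualizing-elt-detection}): for cofibrant, homotopy-finite $A$-modules $M, N$ and a class $z \in H_0(M \otimes_A N)$, the evaluation map $\ev_{z,E}$ is an equivalence for all $A$-modules $E$ provided the reduced class $z_0$ dualizes over $\tilde{A} = H_0A$. The proof of that lemma uses strong duality for finite modules, an algebraic Postnikov tower (good truncations with $\tilde{A}$-module layers) together with five-lemma arguments for bounded-below $E$, and a colimit argument for general $E$. Theorem~\ref{thm:ext-tor-isom-from-pd} then follows by feeding in $M = B(k, A, A)$, $N = B(A, A, k)$, $z = [X]$, and classical Poincar\'e duality over $\tilde{A} = k[\pi_1 X]$ to verify the hypothesis.

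You instead adopt the Dwyer--Greenlees--Iyengar ``Gorenstein'' formulation: use compactness of $k$ to reduce the assertion to the single module $E = C_*\Omega X$ (via the localizing-subcategory argument), prove the Gorenstein equivalence $R\Hom_{C_*\Omega X}(k, C_*\Omega X) \simeq \Sigma^{-d}k$, and then recover $\ev_{z,E}$ for all $E$ by smallness. Both proofs ultimately rest on the same three pillars --- $k$ is small over $C_*\Omega X$ because $X$ is finite, classical Poincar\'e duality with $k[\pi_1 X]$-coefficients provides the base case, and some Postnikov-tower / five-lemma argument bridges from $\tilde{A}$-modules to $A$-modules --- so the logical content is the same, but the packaging differs. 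What your route buys is economy of statement (a single Gorenstein equivalence rather than a universal dualizing-element lemma) and direct alignment with the DGI framework; what the paper's route buys is a self-contained, reusable detection theorem that is more in the style of Klein and that makes the role of the Postnikov tower explicit. Two things to flag if you flesh this out. First, you leave the crux --- the descent along $C_*\Omega X \to k[\pi_1 X]$ that establishes $R\Hom_A(k,A) \simeq \Sigma^{-d}k$ --- entirely to citation, whereas the paper proves exactly this step; a complete write-up should reproduce the Postnikov argument, since it is where the finiteness hypothesis enters. Second, the natural map $E \otimes^L_A R\Hom_A(k,A) \to R\Hom_A(k,E)$ needs care with handedness: $\Hom_A(k,A)$ (left $\Hom$) is naturally a \emph{right} $A$-module, while $E$ in $H_\bullet(X;E) = E \otimes^L_A k$ is also a right module, so the bimodule bookkeeping requires the antipode / Hopf structure of $C_*\Omega X$ to make the types match. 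The paper sidesteps this by working directly with evaluation maps $\ev_{z,E}: \Hom_A(M, E) \to E \otimes_A N$, which are manifestly well-typed.
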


This result directly generalizes the duality results for group cohomology, where cap product on a fundamental class $z \in \Tor_d^{kG}(k, k)$ induces an isomorphism $\Ext^i_{kG}(k, E) \to \Tor_{d - i}^{kG}(k, E)$. 

Klein's results for $G$ a topological group produce an equivalence in the reverse direction, which we summarize. In general, the spectrum $D_G = S[G]^{hG} = F(EG_+, S[G])^G$ has a norm map $D_G \wedge_{hG} E \to E^{hG}$ for $E$ a (naive) $G$-spectrum. When $BG$ is a finitely dominated $G$-complex of formal dimension $d$, this norm map is an equivalence, and $D_G$ is weakly equivalent to a negative sphere $S^{-d}$. Hence, this equivalence represents an equivalence between the derived functors of $S^0 \wedge_G -$ and $F_G(S^0, E)$ with a degree-$d$ shift.

\subsubsection{Duality for $A$-Modules}

Both Klein and Dwyer-Greenlees-Iyengar produce their duality results from basic Poincar\'e duality with local coefficients by five-lemma or triangle arguments together with finiteness properties. For example, many of the duality arguments in Klein come from the notion of a $G$-equivariant duality map~\cite{klein:1999b} $S^d \to Y \wedge_G Z$ for based $G$-complexes $Y, Z$. To detect a $G$-duality map, it suffices to have a $\pi$-equivariant duality map $S^d \to Y_{G_0} \wedge_{\pi} Z_{G_0}$, where $G_0$ is the connected component of the identity in $G$ and where $\pi = \pi_0(G) = G/G_0$.

We state an analogous detection theorem for $A$ a chain DGA (i.e., concentrated in nonnegative degrees), with $A = C_*G$ our primary example. As there are considerable simplifications to the proof in this case, we sketch its outline as well. We call an $A$-module \term{finite free} if it is built from $0$ by a finite number of pushouts of generating cofibrations, \term{finite} if it is a retract of a finite free module, and \term{homotopy finite} if it is weakly equivalent to a finite module. These homotopy finite modules are the same as the \term{small} modules in~\cite{dwyer-greenlees-iyengar:2006}, i.e., those modules in the thick subcategory generated by $A$ in the triangulated category $\modulecategory{A}$. 

An element $z \in (P \otimes_A Q)_n$ defines  evaluation maps $\ev_{z, M}: \Hom_A(P, M) \to M \otimes_A Q$ by $\ev_{z, M}(f) = (f \otimes \id_Q)(z)$. $\ev_{z, M}$ is a cycle if $z$ is, and hence is a chain map if $z$ is a $0$-cycle. On the derived level, then, $\alpha \in H_0(P \otimes^L_A Q)$ induces maps $\ev_{\alpha, M}: R\Hom_A(P, M) \to M \otimes_A^L Q$ well-defined up to homotopy. If $\ev_{\alpha, M}$ is a weak equivalence, it induces an isomorphism
\[
  \Ext^*_A(P, M) \to \Tor^A_*(M, Q).
\]
Finitely generated projective modules over an ordinary ring $R$ satisfy a strong form of duality~\cite[\S1.8]{brown:1982}, and as their analogues in $\modulecategory{A}$ so do the finite $A$-modules. Letting $M^* = \Hom_A(M, A)$ be the $A$-linear dual of $M$, the natural map $M \otimes_A M^* \to \Hom_A(M, M)$ is an isomorphism. Letting $z \in (M \otimes_A M^*)_0$ map to $\id_M$, $\ev_{z, N}$ is an equivalence for all $N$. Consequently, homotopy finite modules also possess such isomorphisms on the derived level.

Following Klein, we now wish to know a more basic criterion to determine whether a given class $\alpha$ as above is a dualizing class.

\begin{notation}
For $A$ a chain DGA, let $\tilde{A} = H_0A$, and let $\pi: A \to \tilde{A}$ be the surjective map taking $a \in A_0$ to the class $[a]$, and taking $a \in A_n$ to $0$ for $n > 0$.

For a left (resp., right) $A$-module $M$, let $\tilde{M}$ be the $\tilde{A}$-module $\pi^*\tilde{A} \otimes_A M$ (resp., $M \otimes_A \pi^*\tilde{A}$). Let $\pi_M: M \to \pi^*\tilde{M}$ be the induced surjective map of $A$-modules.
\end{notation}

We also recall that $H_*M$ is an $H_*A$-module, and so in particular each $H_jM$ is an ordinary $\tilde{A}$-module.

\begin{thm}\label{thm:dualizing-elt-detection}
Suppose $M, N$ are cofibrant, homotopy finite $A$-modules. Take $z \in H_0(M \otimes_A N)$, and let $z_0 = (\pi_M \otimes_\pi \pi_N)_*(z) \in H_0(\tilde{M} \otimes_{\tilde{A}} \tilde{N})$. Then $\ev_{z, E}$ is an equivalence for all $A$-modules $E$ if $\ev_{z_0, \tilde{A}}$ is an equivalence.
\begin{proof}
It suffices to reduce to the case when $M, N$ are finite: since $M, N$ are cofibrant and homotopy finite, there exist finite $A$-modules $F$ and $G$ homotopy equivalent to $M$ and $N$. Strong duality for finite modules then gives that, for every $\tilde{A}$-module $E$, $\ev_{z_0, E}$ is an equivalence, and this map is equivalent to
\[
  \ev_{z, \pi^*E}: \Hom_A(M, \pi^*E) \to \pi^*E \otimes_A N.
\]
Given a bounded below $A$-module $E$, the good truncations~\cite[p.~9]{weibel:1994} $P_jE$ give finite-length $A$-module quotients of $E$ with homology matching that of $E$ up to degree $j$. These quotients assemble into what is essentially an algebraic Postnikov tower for $E$, and the kernels $F_jE$ of the maps $P_jE \to P_{j - 1}E$ are equivalent to $\tilde{A}$-modules. Then five-lemma arguments and duality for $\tilde{A}$-modules show that each $\ev_{z, P_jE}$ is an equivalence, and the finiteness of $M$ and $N$ allows passage to an equivalence for $E$ as well.

Finally, any $A$-module $E$ is a colimit of bounded-below $A$-modules. Again by finiteness, $\Hom_A(M, -)$ and $- \otimes_A N$ commute with colimits, as does $H_*(-)$, so $\ev_{z, E}$ is an equivalence for all $E$. 
\end{proof}
\end{thm}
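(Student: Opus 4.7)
The plan is to peel off two layers of generality: first reduce the modules $M, N$ to genuinely finite ones, then enlarge the class of coefficient modules $E$ step by step, starting from pullbacks of $\tilde A$-modules and climbing via Postnikov towers and colimits. Since $M, N$ are cofibrant and homotopy finite, Proposition~\ref{prop:cof-wk-equiv-htpy-equiv} produces homotopy equivalences to finite cofibrant $A$-modules $F, G$, and since $\Hom_A(-, E)$ and $- \otimes_A E$ preserve such equivalences by Proposition~\ref{prop:cof-pres-wk-equivs}, I can transport $z$ across and assume $M, N$ are finite outright. Finiteness then ensures $\Hom_A(M, -)$ and $- \otimes_A N$ are both exact and commute with arbitrary colimits, which is what drives all subsequent steps.

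The first coefficient case to settle is $E = \pi^*\tilde E$ for an $\tilde A$-module $\tilde E$. Finiteness of $M, N$ over $A$ yields natural identifications $\Hom_A(M, \pi^*\tilde E) \isom \Hom_{\tilde A}(\tilde M, \tilde E)$ and $\pi^*\tilde E \otimes_A N \isom \tilde E \otimes_{\tilde A} \tilde N$, under which $\ev_{z, \pi^*\tilde E}$ becomes $\ev_{z_0, \tilde E}$. The modules $\tilde M, \tilde N$ are finite over the ordinary ring $\tilde A$, so by strong duality $z_0$ corresponds to a map $\tilde M^* \to \tilde N$ whose smash with $\tilde E$ is (up to the strong-duality isomorphism) precisely $\ev_{z_0, \tilde E}$. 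Since by hypothesis this map is an equivalence at $\tilde E = \tilde A$, it is an equivalence for every $\tilde A$-module $\tilde E$, and hence $\ev_{z, E}$ is an equivalence whenever $E$ is pulled back from $\tilde A$.

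Next, I would climb to a bounded-below $A$-module $E$ by an algebraic Postnikov-style filtration. The good truncations $P_j E$ exhaust $E$, and each layer $F_j E = \ker(P_j E \to P_{j-1} E)$ is concentrated in a single homological degree, hence weakly equivalent as an $A$-module to $\pi^*\tilde F$ for some $\tilde A$-module $\tilde F$. By the previous step, $\ev_{z, F_j E}$ is an equivalence, and the short exact sequences $F_j E \to P_j E \to P_{j-1}E$ propagate equivalences through the $P_j E$ by induction on $j$ via the five-lemma applied to the long exact sequences coming from $H_*\Hom_A(M, -)$ and $H_*(- \otimes_A N)$. Compactness of $M$ and $N$ then passes the equivalence to $E = \varinjlim_j P_j E$, and a final colimit step of the same kind handles an arbitrary $A$-module written as a filtered colimit of bounded-below submodules.

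The main obstacle I anticipate is the Postnikov step: identifying each single-degree layer $F_j E$ up to weak equivalence as the pullback of an $\tilde A$-module demands a careful verification that the positive-degree part of $A$ acts trivially on it, at least after a cofibrant replacement that respects the filtration, and one must keep the boundary maps in the five-lemma compatible across the induction so that the equivalences actually assemble to the derived evaluation map. The rest amounts to organizing the model-category machinery and strong-duality observations already set up in the previous subsection.
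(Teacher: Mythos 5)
Your argument follows the paper's proof step for step: reduce $M, N$ to finite modules via homotopy equivalences, settle coefficients pulled back from $\tilde A$ by strong duality (the map $\tilde M^* \to \tilde N$ classified by $z_0$ is an equivalence, then tensor with any $\tilde E$), climb Postnikov towers for bounded-below $E$ using the five lemma, and pass to arbitrary $E$ by a colimit argument; this is exactly the paper's route. The only slips worth flagging are minor: the identifications $\Hom_A(M, \pi^*\tilde E) \isom \Hom_{\tilde A}(\tilde M, \tilde E)$ and $\pi^*\tilde E \otimes_A N \isom \tilde E \otimes_{\tilde A}\tilde N$ are standard change-of-rings adjunctions and have nothing to do with finiteness of $M, N$ (finiteness is what you need for the later colimit-compactness step), and you simultaneously write $E = \varinjlim_j P_j E$ and speak of kernels of maps $P_j E \to P_{j-1}E$, which mixes the two conventions — the quotient truncations form an inverse system with $E$ as the degreewise-stabilizing limit, while the subcomplex truncations form a direct system with $E$ as the colimit and cofibers rather than kernels as layers; either works, but pick one.
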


\subsubsection{Poincar\'e Duality for $C_*\Omega X$}

Suppose now that $X$ is a finite CW-complex satisfying Poincar\'e duality of formal dimension $d$ with respect to all $k[\pi_1 X]$-modules. Thus, for a given such module $E$, capping with a fundamental class $[X] \in H_d(X; k)$ induces isomorphisms
\[
  H^*(X; E) \to H_{* + d}(X; E)
\]
where, as usual, we view cohomology as being nonpositively graded. We reinterpret these properties in the context of duality for $C_*\Omega X$-modules.

First, since $X$ is a finite complex, the ground ring $k$ is homotopy finite~\cite[Prop.~5.3]{dwyer-greenlees-iyengar:2006}, and so its resolution $B(C_*\Omega X, C_*\Omega X, k)$ is cofibrant and homotopy finite. We take this module and the corresponding right-module resolution of $k$ as inputs to Theorem~\ref{thm:dualizing-elt-detection}. 

\begin{proof}[Proof of Theorem~\ref{thm:ext-tor-isom-from-pd}]
Throughout, let $A = C_*\Omega X$, so $\tilde{A} = k[\pi_1 X]$. Let 
\[
  M = B(k, A, A) 
  \quad \text{and} \quad 
  N = B(A, A, k).
\]
Since $C_*X$ is weakly equivalent to $B(k, A, k) \simeq M \otimes_A N \simeq \tilde{M} \otimes_{\tilde{A}} \tilde{N}$, let $z \in H_d(M \otimes_A N)$  correspond to a choice of fundamental class $[X] \in H_dM$. Let $z_0 = (\pi_M \otimes_\pi \pi_N)_*(z) \in H_d(\tilde{M} \otimes_{\tilde{A}} \tilde{N})$. Let $E$ be an $\tilde{A}$-module. Then $z_0$ induces
\[
  \ev_{z_0, E}: \Hom_{\tilde{A}}(\tilde{M}, E) \to E \otimes_{\tilde{A}} \Sigma^{-d} \tilde{N},
\]
which is a weak equivalence by Poincar\'e duality for $X$.

Applying Theorem~\ref{thm:dualizing-elt-detection}, 
\[
  \ev_{z, E}: \Hom_A(M, E) \to E \otimes_A \Sigma^{-d} N
\]
is a weak equivalence for all $E$, and it induces an isomorphism $\Ext_A^*(k, E) \to \Tor^A_{* + d}(E, k)$.

Rephrasing this in terms of ``derived'' homology and cohomology with local coefficients, cap product with $[X]$ induces a weak equivalence $\ev_{[X]}: H^\bullet(X; E) \to \Sigma^{-d} H_\bullet(X; E)$ and hence an isomorphism $H^*(X; E) \to H_{* + d}(X; E)$.
\end{proof}

\section{Hochschild Homology and Cohomology}\label{ch:hh}

\subsection{Hochschild Homology and Poincar\'e Duality}\label{sec:hh-pd}

Now that we have established a Poin\-ca\-r\'e duality isomorphism $H^*(X; E) \to H_{* + d}(X; E)$ for $X$ a $k$-oriented Poincar\'e duality space of dimension $d$ and $E$ an arbitrary $C_*\Omega X$-module, we use it to construct an isomorphism between $HH^*(C_*\Omega X)$ and $HH_{* + d}(C_*\Omega X)$.

\begin{thm}\label{thm:hh-additive-isom}
For $X$ as above, derived Poincar\'e duality and the Hopf-algebraic properties of $C_*\Omega X$ produce a sequence of weak equivalences
\[
  \xymatrix{
	CH^*(C_*\Omega X) \ar[r]^-{\simeq} & H^{\bullet}(X; \Ad(\Omega X)) \ar[d]_{\simeq}^{\ev_{[X]}} \\
	CH_{* + d}(C_*\Omega X) \ar[r]^-{\simeq} & H_{\bullet + d}(X; \Ad(\Omega X))
}
\]
where $CH_*$ and $CH^*$ denote the Hochschild chains and cochains and where $\Ad(\Omega X)$ is $C_*\Omega X$ as a module over itself by conjugation, to be defined more precisely in Definition~\ref{defn:ad-module-top}. In homology, this gives an isomorphism $D: HH^*(C_*\Omega X) \to HH_{* + d}(C_*\Omega X)$ of graded $k$-modules. 
\end{thm}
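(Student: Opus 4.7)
The plan is to produce the two horizontal equivalences in the diagram by rewriting the Hochschild complexes of $A = C_*\Omega X$ as derived local coefficient complexes for $X$ with coefficients in the adjoint module $\Ad(\Omega X)$, and then to obtain the vertical equivalence $\ev_{[X]}$ by direct appeal to the derived Poincar\'e duality of Theorem~\ref{thm:ext-tor-isom-from-pd} with $E = \Ad(\Omega X)$.

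For the horizontal equivalences, I would first replace $\Omega X$ by a topological group model $G$, so that $A = C_*G$ carries the differential graded Hopf algebra structure recorded in Proposition~\ref{prop:inverse-almost-antipode}. The guiding principle is the classical Hopf-algebraic reduction: for a strict Hopf algebra $H$ with antipode $S$, twisting by $S$ identifies an $H$-bimodule $M$ with the single-sided $H$-module $M^{ad}$ having conjugation action $h \cdot m = \sum h_{(1)} m S(h_{(2)})$, and this identification yields natural isomorphisms
\[
  M \otimes_{H^e} H \;\cong\; M^{ad} \otimes_H k, \qquad \Hom_{H^e}(H, M) \;\cong\; \Hom_H(k, M^{ad}).
\]
Specializing to $M = A$ and passing to derived functors would then give $HH_*(A) \cong \Tor^A_*(\Ad(A), k)$ and $HH^*(A) \cong \Ext^*_A(k, \Ad(A))$. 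By the definition of derived local coefficient (co)homology in Section~\ref{ssec:pd}, these targets are precisely $H_*(X; \Ad(\Omega X))$ and $H^*(X; \Ad(\Omega X))$. I would realize the equivalences at the chain level by comparing the $A^e$-bar resolution $B(A, A, A)$ of $A$ with the one-sided bar resolution $B(A, A, k)$ of $k$, the comparison map being built from the comultiplication on $A$ together with $S$.

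Once the horizontal equivalences are in place, the vertical equivalence is immediate: Theorem~\ref{thm:ext-tor-isom-from-pd} applied to $E = \Ad(\Omega X)$ supplies a weak equivalence $\ev_{[X]}: H^\bullet(X; E) \to \Sigma^{-d} H_\bullet(X; E)$ inducing the isomorphism $H^*(X; E) \to H_{*+d}(X; E)$ on homology. Concatenating the three equivalences produces the desired zigzag of chain complexes, and passing to homology yields the graded isomorphism $D: HH^*(C_*\Omega X) \to HH_{*+d}(C_*\Omega X)$.

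The principal obstacle, and the reason the argument is more than bookkeeping, is that Proposition~\ref{prop:inverse-almost-antipode} furnishes $S$ as an antipode only up to a specified chain homotopy, so the Hopf-algebraic untwisting above is not literally a chain isomorphism of $A^e$-modules. I would address this by promoting the conjugation action on $\Ad(A)$ to an $A_\infty$-module structure over $A$ assembled from $S$ together with the Eilenberg-Zilber chain homotopy $H$, and by reinterpreting both sides of the untwisting as $A_\infty$-modules; the comparison then becomes a quasi-isomorphism of $A_\infty$-modules over an ordinary DGA, which after cofibrant replacement descends to a genuine weak equivalence in $\modulecategory{A}$. Making this replacement functorial and compatible with the chosen bar models is precisely the \emph{technical machinery involving morphisms of $A_\infty$-modules} advertised in the outline of Section~\ref{ch:hh}, and is where the real content of the proof lies; the remainder of the argument is a direct application of results already recorded in Sections~\ref{sec:dgha} and~\ref{ssec:pd}.
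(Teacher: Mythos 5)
Your high-level scaffolding is the same as the paper's --- rewrite the Hochschild (co)chains as derived local coefficient complexes with adjoint-module coefficients, then apply Theorem~\ref{thm:ext-tor-isom-from-pd} with $E = \Ad(\Omega X)$ for the vertical equivalence --- and your identification of the non-strict antipode as the obstruction is correct. But your proposed remedy mislocates the $A_\infty$-defect and, more importantly, misses the structural move the paper makes to sidestep the antipode problem entirely. The conjugation action, whether $\ad_0^*(-)$ or $\Ad(-)$ of Definition~\ref{defn:ad-module-top}, is already a strictly associative DG-module action: associativity needs only that $\Delta$ be an algebra morphism and $S = C_*i$ an anti-automorphism, both of which hold strictly. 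So there is no $A_\infty$-module structure to ``promote'' $\Ad(A)$ to; it is already an honest $A$-module. What fails without the antipode identity is the comparison \emph{map}: the $\gamma/\phi$ formulas of Proposition~\ref{prop:dgh-complex-isos}, built from $\Delta$ and $S$, are simplicial isomorphisms of $A^e$-resolutions only when $S$ is a strict antipode, so applying them directly to $A = C_*G$ as you propose does not yield even a simplicial map, let alone an invertible one.

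The paper's key move is to run the untwisting one categorical level up. The group $G$ is a Hopf object in $(\Top, \times)$ with a \emph{strict} antipode $i$, so Proposition~\ref{prop:dgh-complex-isos} applied in $\Top$ yields literal homeomorphisms $B(*, G, G \times G^{\op}) \leftrightarrows B(G, G, G)$ of topological bar constructions (Proposition~\ref{prop:top-bar-group-homeo}); applying $C_*$ and the Eilenberg--Zilber equivalences then produces the chain-level zigzag (Proposition~\ref{prop:adjoint-wk-equivs-top-modules}). The $A_\infty$-machinery is reserved for a residual and much more tractable discrepancy: the two honest $C_*G$-module structures $(EZ\ad_0)^*C_*X$ and $\Ad(X)$ on the same underlying complex differ by pullback along the DGA endomorphism $EZ \circ AW$ of $C_*(G \times G)$, and since $\id - EZ\circ AW$ is controlled by the natural homotopy $H$, one constructs by acyclic models an $A_\infty$-module \emph{morphism} $q$ with $q_1 = \id$ between these two ordinary modules (Proposition~\ref{prop:ez-aw-quasiiso}, Corollary~\ref{corol:adjoint-a-infty-quasi-isom}), and the extended functoriality of $\Ext$ and $\Tor$ along such morphisms (Proposition~\ref{prop:a-infty-extended-functoriality}) finishes the horizontal equivalences. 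Correcting the antipode defect directly at the chain level with a bespoke $A_\infty$-morphism, as your proposal seems to intend, is a plausible but substantially harder and entirely unspecified construction, and is not what the machinery developed in Section~\ref{sec:hh-adjoint-comp} and Appendix~\ref{sec:a-infty-algs} is set up to do.
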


\begin{corol}\label{corol:loop-homology-hh-isom}
The composite of $D$ and the Goodwillie isomorphism produces an additive isomorphism $HH^*(C_*\Omega X) \isom H_{* + d}(LX)$.
\end{corol}

In order to prove this result, we produce the horizontal isomorphisms relating the Hochschild homology and cohomology of $C_*\Omega X$ to $\Ext^*_{C_*\Omega X}$ and $\Tor_*^{C_*\Omega X}$. In fact, these equivalences hold for the algebra $C_*G$ of chains on a topological group $G$, and we develop them in that generality.

\subsection{Applications to \texorpdfstring{$C_*G$}{C*G}}\label{sec:hh-cg}

When $A$ is a Hopf algebra with strict antipode $S$, then the Hochschild homology and cohomology of $A$ can be expressed in terms of $\Ext^*_A(k, -)$ and $\Tor^A_*(-, k)$, using the isomorphisms of Prop.~\ref{prop:dgh-complex-isos}. In particular, let $M$ be an $A$-$A$-bimodule, considered canonically as a right $A^e$-module, and recall the adjoint DGA morphisms $\ad_0, \ad_1: A \to A^e$ from Definition~\ref{defn:adjoint-maps}. Then the Hochschild chains and cochains are isomorphic to
\begin{align*}
  CH_*(A, M) &= M \otimes_{A^e} B(A, A, A)
  \isom M \otimes_{A^e} B(\ad_0^* A^e, A, k)
  \isom B(\ad_0^* M, A, k), \\
  CH^*(A, M) &= \Hom_{A^e}(B(A, A, A), M)
  \isom \Hom_{A^e}(B(k, A, \ad_1^* A^e), M) \\
  & \isom \Hom_A(B(k, A, A), \Hom_{A^e}(\ad_1^* A^e, M))
  \isom \Hom_A(B(k, A, A), \ad_1^* M),
\end{align*}
which represent $\Tor^A_*(\ad_0^* M, k)$ and $\Ext_A^*(k, \ad_1^* M)$. These isomorphisms generalize the classical isomorphisms~\cite[Ex.~1.1.4]{loday:1992}
\[
  HH_*(kG, M) \isom H_*(G, \ad^*M) 
  \quad \text{and} \quad 
  HH^*(kG, M) \isom H^*(G, \ad^*M)
\]
and in fact reduce to them when $A$ is the group ring $kG$ of a discrete group $G$. (Since $kG$ is cocommutative, $\ad_0 = \ad_1$, and these pullback modules coincide.)

Suppose now that $G$ is a topological group, $A = C_*G$, and $S = C_*i$. Recall from Proposition~\ref{prop:inverse-almost-antipode} that $S^2 = \id$, that $S: A \to A^{\op}$ is a DGA isomorphism, but that $S$ satisfies the antipode identity for the DGH $C_*G$ only up to chain homotopy equivalence. Nevertheless, we show that we can relate the Hochschild homology and cohomology of $C_*G$ to $\Ext^*_{C_*G}(k, -)$ and $\Tor^{C_*G}_*(-, k)$. 

The adjoint $A$-module $\ad_0^*C_*G^e$ plays a key role in relating the Hochschild homology and cohomology of $C_*G$ to its $\Ext$ and $\Tor$ groups. In particular, since it is a pullback of an $A^e$-module, it behaves well with respect to both the formation of tensor products and $\Hom$-complexes, and hence with respect to the $\Hom$-$\otimes$ adjunction. As an intermediate result towards Theorem~\ref{thm:hh-additive-isom}, we will therefore establish the following homotopy equivalences of $A^e$-modules.

\begin{thm}\label{thm:adjoint-cg-he}
$B(\ad_0^* C_*G^e, C_*G, k)$ and $B(C_*G, C_*G, C_*G)$ are homotopy equivalent as left $C_*G^e$-modules, and $B(k, C_*G, \ad_0^* C_*G^e)$ and $B(C_*G, C_*G, C_*G)$ are homotopy equivalent as right $C_*G^e$-modules. 
\end{thm}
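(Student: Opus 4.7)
My plan is to model both bar constructions as presentations of the $C_*G$-bimodule $C_*G$ via Rothenberg-Steenrod--type equivalences, then promote the resulting zigzag of weak equivalences to a homotopy equivalence using Proposition~\ref{prop:cof-wk-equiv-htpy-equiv}. Both bar constructions are cofibrant left (respectively, right) $C_*G^e$-modules, so a weak equivalence between them in $\modulecategory{C_*G^e}$ will suffice.

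First I would unpack $\ad_0$ geometrically. Under the Eilenberg-Zilber quasi-iso\-mor\-phism $C_*G \otimes C_*G^{\op} \to C_*(G \times G^{\op})$, the DGA map $\ad_0$ corresponds to $C_*\phi$, where $\phi \colon G \to G \times G^{\op}$ sends $g$ to $(g, g^{-1})$. Consequently, $\ad_0^* C_*G^e$ is a chain-level avatar of $C_*(G \times G^{\op})$ carrying the commuting actions in which $G$ acts on $G \times G^{\op}$ on the right by $(g_1, g_2) \cdot g = (g_1 g, g^{-1} g_2)$ (right multiplication via $\phi$) and $G \times G^{\op}$ acts on itself on the left by ordinary multiplication. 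Crucially, this right $G$-action is free: $(g_1 g, g^{-1} g_2) = (g_1, g_2)$ forces $g = 1$.

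Applying Proposition~\ref{prop:chains-borel-construction}, enhanced to track the residual left $C_*(G \times G^{\op})$-equivariance (and hence, via $EZ$, left $C_*G^e$-equivariance), then identifies
\[
  B(\ad_0^* C_*G^e, C_*G, k) \simeq C_*\bigl((G \times G^{\op}) \times_G EG \bigr)
\]
as left $C_*G^e$-modules. Freeness of the right $G$-action makes the orbit map $(G \times G^{\op}) \times_G EG \to (G \times G^{\op})/G$ a weak equivalence, and multiplication $(G \times G^{\op})/G \to G$, $[(g_1, g_2)] \mapsto g_1 g_2$, is a well-defined homeomorphism intertwining the residual left $G \times G^{\op}$-action with the standard bimodule action $(h_1, h_2) \cdot g = h_1 g h_2$. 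Combined with the bar augmentation $B(C_*G, C_*G, C_*G) \to C_*G$, this produces a zigzag of weak equivalences of left $C_*G^e$-modules between the two bar constructions; since both are cofibrant, Proposition~\ref{prop:cof-wk-equiv-htpy-equiv} upgrades this to a homotopy equivalence. The right-module statement is handled symmetrically via $B(k, C_*G, \ad_0^* C_*G^e) \simeq C_*(EG \times_G (G \times G^{\op}))$ with left $G$-action $g \cdot (g_1, g_2) = (g g_1, g_2 g^{-1})$ and the quotient map $[(g_1, g_2)] \mapsto g_2 g_1$.

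The principal technical obstacle is securing $C_*G^e$-equivariance on the chain level. The natural left action on $C_*(G \times G^{\op})$ is by $C_*(G \times G^{\op})$, not by $C_*G^e = C_*G \otimes C_*G^{\op}$, and these DGAs are identified only through the Eilenberg-Zilber map, which is lax monoidal rather than strict. I therefore expect the intermediate equivalences to live in the $A_\infty$-module world rather than in strict $C_*G^e$-modules, and this is presumably where the $A_\infty$-module morphism machinery promised in the appendix is deployed, rigidifying an $A_\infty$-equivariant zigzag into the strict $C_*G^e$-equivariant homotopy equivalence between the cofibrant modules at hand.
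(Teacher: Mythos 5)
Your proposal is essentially correct and captures the key ideas: the geometric interpretation of $\ad_0$ as $C_*\phi$ with $\phi(g)=(g,g^{-1})$, the freeness of the induced right $G$-action on $G\times G^{\op}$, the reduction to a Borel-type construction, cofibrancy plus Proposition~\ref{prop:cof-wk-equiv-htpy-equiv} to upgrade the zigzag to a homotopy equivalence, and the $A_\infty$-module machinery as the repair for the lax/strict mismatch. Where you diverge from the paper is in how freeness is exploited. You propose collapsing $(G\times G^{\op})\times_G EG$ to $(G\times G^{\op})/G\cong G$ via the orbit map, then closing the zigzag through $C_*G$ itself via the bar augmentation $B(C_*G,C_*G,C_*G)\to C_*G$. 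The paper instead constructs a direct simplicial homeomorphism $\phi^R_\bullet\colon B_\bullet(*,G,G\times G^{\op})\leftrightarrows B_\bullet(G,G,G)$ (Proposition~\ref{prop:top-bar-group-homeo}); the degree-$n$ formula $\phi^R_n(g_1,\dots,g_n,(g,g'))=(g'(g_1\cdots g_n)^{-1},g_1,\dots,g_n,g)$ is exactly the ``shearing'' witness for the freeness you identify. This keeps the whole zigzag inside bar constructions and avoids any appeal to the orbit map $Y\times_G EG\to Y/G$ being a weak equivalence, a standard but not automatic fact depending on point-set niceness of the action. Your route buys geometric transparency; the paper's buys a zigzag that never leaves the simplicial world and whose $C_*G^e$-equivariance can be verified term by term. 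Finally, note that the one step you compress into ``$B(\ad_0^* C_*G^e, C_*G, k)\simeq C_*((G\times G^{\op})\times_G EG)$ as $C_*G^e$-modules'' is precisely where all the $A_\infty$ subtlety concentrates: the paper factors it as a strict map $B(\id,\id,EZ)$, followed by the $A_\infty$-module quasi-isomorphism $q$ of Corollary~\ref{corol:adjoint-a-infty-quasi-isom}, followed by another $EZ$, all wrapped in $EZ^*$-pullbacks so that every intermediate object carries a genuine $C_*G^e$-module structure. You have correctly flagged the obstacle and the right tool, but the inductive acyclic-models construction of the higher homotopies in Proposition~\ref{prop:ez-aw-quasiiso} is the technical core of the argument and would still need to be carried out.
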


As a consequence of this theorem, we immediately obtain relations between Hochschild constructions and $\Ext$ and $\Tor$ over $C_*G$:

\begin{corol}\label{corol:cg-hh-isoms}
For $M \in \bimodulecategory{A}{A}$, considered canonically as a right $A^e$-module, there are homotopy equivalences
\begin{align*}
  & \Lambda_\bullet(G, M): M \otimes_{A^e} B(A, A, A) \xrightarrow{\simeq} B(\ad_0^*M, A, k), \\
  & \Lambda^\bullet(G, M): \Hom_{A^e}(B(A, A, A), M) \xrightarrow{\simeq} \Hom_A(B(k, A, A), \ad_0^*M).
\end{align*}
When $G$ and $M$ are understood, we omit them from the notation. Passing to homology, these induce isomorphisms
\begin{align*}
  &\Lambda_*(G, M): HH_*(A, M) \to \Tor^A_*(\ad_0^*M, k) 
  \quad \text{and} \\
  &\Lambda^*(G, M): HH^*(A, M) \to \Ext_A^*(k, \ad_0^*M).
\end{align*}
\begin{proof}
Since $\ad_0^*M \isom M \otimes_{A^e} \ad_0^* A^e$, the first equivalence 
of Theorem~\ref{thm:adjoint-cg-he} yields a homotopy equivalence $\Lambda_\bullet(G, M)$,
\[
  M \otimes_{A^e} B(A, A, A) \simeq M \otimes_{A^e} B(\ad_0^* A^e, A, k) \isom B(\ad_0^* M, A, k),
\]
which induces the isomorphism $\Lambda_*(G,M): HH_*(A, M) \to \Tor^A_*(\ad_0^*M, k)$. Likewise, since $\ad_0^* M \isom \Hom_{A^e}(\ad_0^* A^e, M)$, the second equivalence 
of Theorem~\ref{thm:adjoint-cg-he} yields a homotopy equivalence $\Lambda^\bullet(G, M)$,
\[
  \Hom_{A^e}(B(A, A, A), M) \simeq \Hom_{A^e}(B(k, A, \ad_0^* A^e), M) \isom \Hom_A(B(k, A, A), \ad_0^* M),
\]
which induces the isomorphism $\Lambda^*(G,M): HH^*(A, M) \to \Ext_A^*(k, \ad_0^*M)$.

When $G$ and $M$ are clear from context, we may drop them from the notation.
\end{proof}
\end{corol}

The $C_*G$-module $\ad_0^*M$ is actually not quite the definition we have in mind for the statement of Theorem~\ref{thm:hh-additive-isom}. We introduce this slightly more natural adjoint module:

\begin{defn}\label{defn:ad-module-top}
Let $G^{\op}$ be the group $G$ with the opposite multiplication.
Suppose that $X$ is a space with a left action $a_X$ by $G \times G^{\op}$. Pullback along $(\id \times i)\delta: G \to G \times G^{\op}$ makes $X$ a left $G$-space by ``conjugation,'' with $(g, x) \mapsto gxg^{-1}$. Let $\Ad_L(X)$ be $C_*X$ with the corresponding left $C_*G$-module structure.

Similarly, we may produce a right $C_*G$-module structure $\Ad_R(X)$ on $C_*X$. We denote these modules simply as $\Ad(X)$ when the module structure is clear from context.
\end{defn}

Note that these $\Ad(X)$ modules arise from first converting the $G \times G^{\op}$-action into a $G$-action and then applying $C_*$. Consequently, these modules arise more naturally in topological contexts, although they are not as immediately compatible with tensor product and $\Hom$-complex constructions. 

Let $K$ be another group. Note that if $X$ has a right $K$-action commuting with the left $G \times G^{\op}$-action, then $\Ad(X)$ is a $C_*G$-$C_*K$-bimodule. 

With the application of standard simplical techniques, the introduction of these $\Ad$ modules immediately provides a key intermediate step towards Theorem~\ref{thm:adjoint-cg-he}. First, we recall the two-sided bar construction in the topological setting.

\begin{defn}
Let $X$ be a left $G$-space and $Y$ a right $G$-space. Then the two-sided bar construction $B_\bullet(X, G, Y)$ is a simplicial space. Let $B(Y, G, X)$ be its geometric realization $|B_\bullet(Y, G, X)|$.
\end{defn}

\begin{prop} 
The maps $EZ$ induce a weak equivalence 
\[
  EZ: B(C_*Y, C_*G, C_*X) \to C_*(B(Y, G, X)).
\]
\begin{proof}
Take $n \geq 0$. Observe that $B_n(C_*Y, C_*G, C_*X) = C_*Y \otimes C_*G^{\otimes n} \otimes C_*X$, and that 
\[
  EZ: C_*Y \otimes C_*G^{\otimes n} \otimes C_*X \to C_*(Y \times G^n \times X) = C_*(B_n(Y, G, X))
\]
is a chain homotopy equivalence. Denote this map by $EZ_n$. By the form of the face and degeneracy maps $d_i$ and $s_i$ for $B_\bullet(Y, G, X)$ and for $B_\bullet(C_*Y, C_*G, C_*X)$, it follows that $C_*(d_i) EZ_n = EZ_{n - 1} d_i$ and $C_*(s_i) EZ_n = EZ_{n + 1} s_i$ for all $n$ and $0 \leq i \leq n$. Hence, the $EZ_*$ assemble to a chain homotopy equivalence
\[
  EZ: \Tot B_*(C_*Y, C_*G, C_*X) \to \Tot C_*(B_{\bullet}(Y, G, X)).
\]
Finally, since there is a weak equivalence from $\Tot C_*(E_\bullet)$ to $C_*(|E_\bullet|)$ for any simplicial space $E_\bullet$,~\cite[\S V.1]{goodwillie:1985}
the composite gives the desired weak equivalence.
\end{proof}
\end{prop}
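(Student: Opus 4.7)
The plan is to upgrade the classical Eilenberg--Zilber chain equivalence from a single product to the entire simplicial two-sided bar construction, and then pass to realizations.

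First, I would record that at each simplicial level $n$, iterating the binary map $EZ$ produces a chain homotopy equivalence
\[
  EZ_n: C_*Y \otimes C_*G^{\otimes n} \otimes C_*X \to C_*(Y \times G^n \times X),
\]
using the naturality, associativity, and unitality of $EZ$ noted after its definition. This is standard and requires no transversality input.

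Second, I would verify that the $EZ_n$ assemble into a morphism of simplicial chain complexes $EZ_\bullet: B_\bullet(C_*Y, C_*G, C_*X) \to C_*B_\bullet(Y, G, X)$. Every face map $d_i$ on $B_\bullet(Y, G, X)$ is induced either by the actions $a_Y$, $a_X$ at the boundary or by the multiplication $m$ of $G$ on two adjacent factors, while the corresponding face map in $B_\bullet(C_*Y, C_*G, C_*X)$ factors as an internal $EZ$ followed by $C_*$ of that same space-level map. Naturality and associativity of $EZ$ (together with compatibility with the unit for the degeneracies) then give $C_*(d_i) \circ EZ_n = EZ_{n-1} \circ d_i$ and $C_*(s_i) \circ EZ_n = EZ_{n+1} \circ s_i$, as claimed.

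Third, I would totalize, identifying $B(C_*Y, C_*G, C_*X)$ with $\Tot B_*(C_*Y, C_*G, C_*X)$ via the Moore complex differential as noted after Definition~\ref{defn:geom-real-chain-cx}. A levelwise chain homotopy equivalence of simplicial chain complexes induces a quasi-isomorphism of total complexes (for instance, by filtering by simplicial degree and comparing spectral sequences, or by assembling the simplicial homotopies directly). Finally, composing with the standard weak equivalence $\Tot C_*(E_\bullet) \xrightarrow{\simeq} C_*(|E_\bullet|)$ for a simplicial space $E_\bullet$ (cited from Goodwillie) yields the required map $B(C_*Y, C_*G, C_*X) \xrightarrow{\simeq} C_*B(Y, G, X)$.

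The main obstacle is essentially bookkeeping rather than depth: one must keep the Koszul signs of the Moore differential consistent with the signs in the simplicial face maps of the algebraic bar construction, and must use the \emph{thick} (rather than fat) realization on the chain side so that the comparison with $\Tot$ and with $C_*$ of the space-level geometric realization does not require extra cofibrancy hypotheses on $B_\bullet(Y, G, X)$.
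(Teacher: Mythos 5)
Your proposal is correct and follows essentially the same route as the paper: establish the levelwise Eilenberg--Zilber equivalences $EZ_n$, check they commute with the simplicial structure maps (using naturality and associativity of $EZ$), totalize, and compose with the standard comparison $\Tot C_*(E_\bullet) \to C_*(|E_\bullet|)$. The only cosmetic difference is that the paper asserts the totalized map is itself a chain homotopy equivalence, whereas you only claim a quasi-isomorphism (sketching two ways to see it); both suffice for the conclusion, and your remark about using the thick realization so as not to need extra cofibrancy hypotheses is a reasonable precaution, though note that ``thick'' and ``fat'' are usually synonyms---you presumably mean thick rather than the normalized (thin) realization.
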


Note also that if $Y$ or $X$ has an action by another group $H$ commuting with that of $G$, then the weak equivalence above is one of $C_*H$-modules.

\begin{corol}
In the notation above, taking $Y = *$ and $X = G \times G^{\op}$ with the standard left and right $G \times G^{\op}$ actions yields a weak equivalence of right $C_*(G \times G^{\op})$-modules
\[
  B(k, C_*G, \Ad(G \times G^{\op})) \to C_*(B(*, G, G \times G^{\op}))
\]
Taking $X = Y = G$ yields a weak equivalence of right $C_*(G)^e$-modules
\[
  B(C_*G, C_*G, C_*G) \to EZ^*C_*(B(G, G, G)).
\]
\end{corol}

We now relate the two topological bar constructions $B(*, G, G \times G^{\op})$ and $B(G, G, G)$.

\begin{prop}\label{prop:top-bar-group-homeo}
There are homeomorphisms $\phi^R: B(*, G, G \times G^{\op}) \leftrightarrows B(G, G, G): \gamma^R$ of right $G \times G^{\op}$-spaces and $\phi^L: B(G \times G^{\op}, G, *) \leftrightarrows B(G, G, G): \gamma^L$ of left $G \times G^{\op}$-spaces.
\begin{proof}
As $G$ is a Hopf-object with antipode in $\Top$ with its usual symmetric monoidal structure, these homeomorphisms come from the simplicial Hopf-object isomorphisms of Prop.~\ref{prop:dgh-complex-isos}. In particular, they define simplicial maps $\phi^R_\bullet: B_\bullet(*, G, G \times G^{\op}) \leftrightarrows B_\bullet(G, G, G): \gamma^R_\bullet$ by
\begin{align*}
  \phi^R_n(g_1, \dotsc, g_n, (g, g')) &= (g'(g_1 \dotsm g_n)^{-1}, g_1, \dotsc, g_n, g), \\
  \gamma^R_n(g', g_1, \dotsc, g_n, g) &= (g_1, \dotsc, g_n, (g, g' g_1 \dotsm g_n))
\end{align*}
Since $(g_1, \dotsc, g_n, (g, g')) \cdot (h, h') = (g_1, \dotsc, g_n, (gh, h'g'))$ and $(g', g_1, \dotsc, g_n, g) \cdot (h, h') = (h'g', g_1, \dotsc, g_n, gh)$, they are maps of right $G \times G^{\op}$-spaces. Applying geometric realization gives the $G \times G^{\op}$-equivariant homeomorphisms $\phi^R$ and $\gamma^R$.

Similarly, we obtain simplicial isomorphisms $\phi^L_\bullet: B_\bullet(G \times G^{\op}, G, *) \leftrightarrows B_\bullet(G, G, G): \gamma^L_\bullet$ by
\begin{align*}
  \phi^L_n((g, g'), g_1, \dotsc, g_n) &= (g, g_1, \dotsc, g_n, (g_1 \dotsm g_n)^{-1} g'), \\
  \gamma^L_n(g, g_1, \dotsc, g_n, g') &= ((g, g_1 \dotsm g_n g'), g_1, \dotsc, g_n)
\end{align*}
As above, these are isomorphisms simplical left $G \times G^{\op}$-spaces, and hence their geometric realizations give the $G \times G^{\op}$-equivariant homeomorphisms $\phi^R$ and $\gamma^R$. 
\end{proof}
\end{prop}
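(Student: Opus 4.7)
The plan is to construct the homeomorphisms directly at the simplicial level, exploiting the fact that $G$ is a group object (and therefore a Hopf object with antipode) in $\Top$ with its Cartesian monoidal structure. The underlying idea is the familiar shearing isomorphism for groups: the map $G \times G \to G \times G$ given by $(a,b) \mapsto (a, ab)$ is a bijection, and this principle extends to the two-sided bar construction once we keep track of the action structures. Since $\Top$ is symmetric monoidal and the formulas are polynomial in the group operations, everything I do algebraically will pass transparently to the topological setting.

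Concretely, for the right-hand statement I would write down explicit simplicial maps at each level. On $B_n(*, G, G \times G^{\op}) = G^n \times (G \times G^{\op})$, the natural ansatz is
\[
  \phi^R_n(g_1, \ldots, g_n, (g, g')) = (\varphi(g_1,\ldots,g_n,g,g'),\, g_1, \ldots, g_n,\, g),
\]
because I want the middle $G^n$ factor and the rightmost $G$ factor to match those in $B_n(G,G,G) = G^{n+2}$. Solving for the value of $\varphi$ that makes the outer face map $d_0$ commute with $\phi^R_n$ forces $\varphi = g'(g_1 \cdots g_n)^{-1}$, which is exactly the Hopf-shear: the antipode $i$ of $G$ is used to push the $g'$ factor leftward past the intermediate group elements. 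The inverse $\gamma^R_n$ is then determined by solving for $g'$ in terms of the new leftmost coordinate, giving $\gamma^R_n(g', g_1, \ldots, g_n, g) = (g_1, \ldots, g_n, (g, g' g_1 \cdots g_n))$.

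Once the formulas are in hand, I would verify three things: (i) compatibility with each face map $d_i$ (the interior ones by associativity of multiplication in $G$, the two outer ones by careful bookkeeping of the inverse factor), (ii) compatibility with the degeneracy maps $s_i$, which is immediate from insertion of the identity, and (iii) equivariance with respect to the right $G \times G^{\op}$-action, where on the source the action is $(g_1, \ldots, g_n, (g, g')) \cdot (h, h') = (g_1, \ldots, g_n, (gh, h' g'))$ and on the target it is the standard outer right action on $B_\bullet(G, G, G)$. Both actions leave the interior $g_1, \ldots, g_n$ alone, so equivariance reduces to a two-variable check. Applying geometric realization, which preserves products and hence equivariance for group actions by topological groups, yields the $G \times G^{\op}$-equivariant homeomorphism $\phi^R$ with inverse $\gamma^R$. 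The left-handed statement is completely symmetric, with the shearing formula $\phi^L_n((g,g'), g_1, \ldots, g_n) = (g, g_1, \ldots, g_n, (g_1 \cdots g_n)^{-1} g')$ pairing in the leftmost position instead.

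The main obstacle — essentially the only nontrivial content — is arranging the antipode factors so that the two outer face maps (which are asymmetric in nature: one encodes a group action on $*$ or $G \times G^{\op}$, the other encodes left multiplication by $G$ on the bimodule factor $G$) both become equivariant under the shear. This is exactly the calculation that produces the classical Hopf-algebra shearing isomorphism $A \otimes A \cong A \otimes_{\mathrm{ad}} A$, and the author has evidently packaged this calculation once and for all in the simplicial Hopf-object isomorphisms of Proposition~\ref{prop:dgh-complex-isos}; invoking that result in the monoidal category $(\Top, \times)$ delivers the simplicial maps $\phi^R_\bullet, \gamma^R_\bullet, \phi^L_\bullet, \gamma^L_\bullet$ directly, and realization finishes the argument.
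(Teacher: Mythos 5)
Your proposal is correct and follows essentially the same route as the paper: write down the explicit level-$n$ shearing formulas (which match the paper's exactly), check simplicial identities and $G \times G^{\op}$-equivariance, invoke Proposition~\ref{prop:dgh-complex-isos} as the source of these formulas in the monoidal category $(\Top, \times)$, and pass through geometric realization. The only cosmetic quibble is that compatibility with $d_0$ alone at a fixed level does not by itself determine $\varphi$; you need the level-$0$ case (or, equivalently, $d_n$) to anchor the induction, but this does not affect the validity of the argument.
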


Combining these results, we obtain the following sequence of weak equivalences:

\begin{prop}\label{prop:adjoint-wk-equivs-top-modules}
There are weak equivalences
\[
\xymatrix{
	EZ^* B(k, C_*G, \Ad(G \times G^{\op})) \ar[d]^{EZ}_{\simeq}  & B(C_*G, C_*G, C_*G) \ar[d]^{EZ}_{\simeq} \\
	EZ^*C_*(B(*, G, G \times G^{\op})) \ar@<1ex>[r]^-{C_*(\phi)}_-{\isom} & EZ^*C_*(B(G, G, G)) \ar@<1ex>[l]^-{C_*(\gamma)}
	}
\]
of right $C_*G^e$-modules as indicated.
\end{prop}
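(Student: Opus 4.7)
The plan is to assemble the diagram piece by piece, drawing entirely on two results established just above: the corollary identifying two-sided bar constructions of chain complexes with chains on topological bar constructions via $EZ$, and Proposition~\ref{prop:top-bar-group-homeo}, which provides the mutually inverse $G \times G^{\op}$-equivariant homeomorphisms $\phi^R$ and $\gamma^R$ between $B(*, G, G \times G^{\op})$ and $B(G, G, G)$.

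I would first handle the vertical arrows. The right-hand one is exactly the conclusion of the corollary applied with $X = Y = G$, giving a weak equivalence $EZ: B(C_*G, C_*G, C_*G) \to EZ^* C_*(B(G, G, G))$ of right $C_*G^e$-modules. For the left-hand arrow, apply the same corollary with $Y = *$ and $X = G \times G^{\op}$, where $X$ carries the conjugation left $G$-action and the standard right $G \times G^{\op}$-action by multiplication; this yields a weak equivalence $B(k, C_*G, \Ad(G \times G^{\op})) \to C_*(B(*, G, G \times G^{\op}))$ of right $C_*(G \times G^{\op})$-modules. Restricting along the DGA morphism $EZ: C_*G^e \to C_*(G \times G^{\op})$ then produces the $C_*G^e$-equivariant weak equivalence in the diagram.

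For the horizontal arrows, apply $C_*$ to the homeomorphisms of Proposition~\ref{prop:top-bar-group-homeo}. Since $\phi^R$ and $\gamma^R$ are mutually inverse $G \times G^{\op}$-equivariant homeomorphisms, $C_*(\phi^R)$ and $C_*(\gamma^R)$ are mutually inverse isomorphisms of $C_*(G \times G^{\op})$-modules (indeed, bijective on singular simplices in each degree), and pulling back along $EZ$ promotes them to isomorphisms of right $C_*G^e$-modules. Chasing the diagram, all four arrows are weak equivalences of right $C_*G^e$-modules as claimed.

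The only substantive point is the bookkeeping of actions across the columns: the outer-factor actions on the bar constructions in the left column live naturally over $C_*G \otimes C_*G^{\op}$, whereas the chains on the topological bar constructions in the right column carry $C_*(G \times G^{\op})$-module structures, and the two are linked through the lax monoidal structure map $EZ$. The expected obstacle is thus notational rather than mathematical, amounting to checking that $EZ$ on singular chains intertwines these two families of actions; this is immediate from naturality and lax monoidality of $EZ$ together with the explicit form of the $G \times G^{\op}$-action on each bar construction.
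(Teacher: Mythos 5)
Your proof is correct and is exactly the argument the paper intends: the proposition is a direct assembly of the preceding corollary (applied with $Y = *$, $X = G \times G^{\op}$ and with $X = Y = G$) and the right-equivariant homeomorphisms $\phi^R$, $\gamma^R$ of Proposition~\ref{prop:top-bar-group-homeo}, pulled back along $EZ\colon C_*G^e \to C_*(G \times G^{\op})$. The paper offers no explicit proof beyond this assembly, and your bookkeeping of the module structures matches the paper's conventions.
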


\subsection{Comparison of Adjoint Module Structures}\label{sec:hh-adjoint-comp}

We now relate the $\Ad(X)$ $C_*G$-module structure to the $\ad_0^*$ pullback modules in Theorem~\ref{thm:adjoint-cg-he}. To do so, we employ the machinery of $A_\infty$-algebras, and in particular morphisms between modules over an $A_\infty$-algebra. Appendix~\ref{sec:a-infty-algs} contains the relevant details of this theory. 

We apply this theory to the adjoint modules discussed above. As before, let $X$ be a $(G \times G^{\op})$-space. Note that $C_*(G^{\op})$ and $(C_*G)^{\op}$ are isomorphic DGAs, and the homotopy equivalence $EZ_{G, G^{\op}}: C_*G \otimes C_*G^{\op} \to C_*(G \times G^{\op})$ is a morphism of DGAs. Since $C_*X$ is a left $C_*(G \times G^{\op})$-module, $EZ^*C_*X$ is a left $C_*G^e$-module, and so $\ad_0^*EZ^*C_*X = (EZ \ad_0)^* C_*X$ is another left $C_*G$-module structure on $C_*X$.

As will be shown below, these two $C_*G$-module structures on $C_*X$ factor through the left $C_*(G \times G)$-action $C_*(a_X) EZ_{G \times G, X} (C_*(\id \times i) \otimes \id)$. Hence, we consider such $C_*(G \times G)$-modules more generally. 

\begin{prop}
For $G, K$ groups and $A = C_*(G \times K)$, $\psi = EZ_{G, K} AW_{G, K}: A \to A$ is a DGA morphism.
\begin{proof}
Recall that the multiplication in $A$ is given by $\mu = C_*((m_G \times m_K) t_{(23)}) EZ_{G \times K, G \times K}$. We check that $\psi \mu = \mu (\psi \otimes \psi)$, using the $t_\sigma$ notation of \ref{sec:twist-notation}:
\begin{align*}
  \psi \mu 
  &= EZ_{G, K} AW_{G, K} C_*((m_G \times m_K) t_{(23)}) EZ_{G \times K, G \times K} \\
  &= C_*(m_G \times m_K) EZ_{G \times G, K \times K} AW_{G \times G, K \times K} C_*(t_{(23)}) EZ_{G \times K, G \times K} \\
  &= C_*(m_G \times m_K) EZ_{G \times G, K \times K} (EZ_{G,G} \otimes EZ_{K, K}) \tau_{(23)} (AW_{G, K} \otimes AW_{G, K}) \\
  &= C_*(m_G \times m_K) C_*(t_{(23)}) EZ_{G, K, G, K} (AW_{G, K} \otimes AW_{G, K}) \\
  &= C_*((m_G \times m_K) t_{(23)}) EZ_{G \times K, G \times K} (EZ_{G, K} AW_{G, K} \otimes EZ_{G, K} AW_{G, K}) = \mu (\psi \otimes \psi).
\end{align*}
Furthermore, since $EZ \circ AW = \id$ on $0$-chains, $\psi \eta = \eta$, so $\psi$ is a DGA morphism. 
\end{proof}
\end{prop}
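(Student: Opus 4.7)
The plan is to check the three DGA axioms for $\psi$. Being a composite of two chain maps, $\psi$ is automatically a chain map, so only compatibility with the unit and the multiplication must be verified.

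For the unit, on elements of degree $0$ the map $AW$ reduces to the identity (its formula has only the $i = 0$ summand), and likewise $EZ$ on a tensor of two $0$-chains just returns the product $0$-chain. Hence $\psi \eta = \eta$, since $\eta$ lands in degree zero.

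The real content is multiplicativity, $\psi \mu = \mu (\psi \otimes \psi)$. I would first unpack the product on $A = C_*(G \times K)$: if $t_{(23)} \colon G \times K \times G \times K \to G \times G \times K \times K$ is the permutation swapping the middle two factors, then
\[
  \mu = C_*\bigl((m_G \times m_K) \circ t_{(23)}\bigr) \circ EZ_{G \times K,\, G \times K}.
\]
Composing with $\psi = EZ_{G,K} \circ AW_{G,K}$ on the outside, I would use naturality of $AW$ to slide $AW_{G,K}$ past $C_*(m_G \times m_K)$, turning it into $AW_{G \times G,\, K \times K}$ applied earlier, and by naturality $EZ_{G,K}$ can be moved past $C_*(m_G \times m_K)$ as well, yielding a $C_*(m_G) \otimes C_*(m_K)$ factor.

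The crucial step is a four-factor Eilenberg--Zilber identity: the composite $AW_{G \times G,\, K \times K} \circ C_*(t_{(23)}) \circ EZ_{G \times K,\, G \times K}$ should rewrite as $(EZ_{G,G} \otimes EZ_{K,K}) \circ \tau_{(23)} \circ (AW_{G,K} \otimes AW_{G,K})$, where $\tau_{(23)}$ is the signed chain-level middle swap on $C_*G \otimes C_*K \otimes C_*G \otimes C_*K$. This compatibility follows from the associativity of $EZ$ and of $AW$ together with naturality with respect to the symmetry $t$ in $\Top$ and $\tau$ in $\Ch(k)$. Once this interchange is in hand, reassembling $C_*(m_G \times m_K) \circ (EZ_{G,G} \otimes EZ_{K,K}) \circ \tau_{(23)}$ recovers $\mu$ applied after $\psi \otimes \psi$, completing the verification. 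The main obstacle I anticipate is bookkeeping Koszul signs in the shuffle formulas and in $\tau_{(23)}$; the structural content is merely associativity and naturality of the Eilenberg--Zilber pair, but the symbolic manipulation must be carried out carefully to ensure the signs line up.
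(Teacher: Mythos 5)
Your proof is correct and takes essentially the same approach as the paper: unwind $\mu$, slide $\psi$ past $C_*(m_G \times m_K)$ by naturality, isolate the same four-factor Eilenberg--Zilber interchange $AW_{G\times G,K\times K}\, C_*(t_{(23)})\, EZ_{G\times K,G\times K} = (EZ_{G,G}\otimes EZ_{K,K})\, \tau_{(23)}\, (AW_{G,K}\otimes AW_{G,K})$, reassemble, and handle the unit separately. One small caution: only $EZ$ is strictly compatible with the symmetry (the front-face/back-face map $AW$ is not), so that interchange is not a purely formal consequence of the listed properties and really rests on the explicit shuffle and front/back formulas together with $AW \circ EZ = \id$ --- but the paper presents this step at the same level of detail.
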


In light of the interpretation given above of morphisms of $A_\infty$-modules between ordinary $A$-modules, the following proposition states that pullback along $EZ_{G,K} AW_{G,K}$ respects the action of $C_*(G \times K)$ only up to a system of higher homotopies.

\begin{prop}\label{prop:ez-aw-quasiiso}
Let $A = C_*(G \times K)$, and suppose that $M$ is a left $A$-module, with action $a_M$. Then $(EZ_{G,K} AW_{G,K})^* M$ is also a left $A$-module, which we denote $(L, a_L)$. There is a quasi-isomorphism $f: L \to M$ of $A_\infty$-modules over $A$, with $f_1: L \to M$ equal to $\id_M$. 
\begin{proof}
We construct the levels $f_n$ of this $A_\infty$-module morphism inductively using the theory of acyclic models~\cite[Ch.~13]{switzer:2002}. Let $H^0 = \id_k$ and let $H^1 = H$, the natural homotopy with $dH + Hd = EZ \circ AW - \id$. We then construct certain natural maps $H^n$ of degree $n$ of the form
\[
  H^n_{X_1, \dotsc, X_{2n}}: C_*(X_1 \times X_2) \otimes \dotsb \otimes C_*(X_{2n - 1} \times X_{2n}) \to C_*(X_1 \times \dotsb \times X_{2n})
\]
for spaces $X_1, \dotsc, X_{2n}$. Suppose that $H^n$ has been constructed. We abbreviate such expressions as $EZ_{X_1 \times X_2, X_3 \times X_4}$ to $EZ_{12, 34}$, for example.  With $1 \leq i \leq n$, define
\begin{align*}
  \hat{H}^{n + 1, 0} &= EZ_{12, 3 \dotsm (2n + 2)}(\id \otimes H^n), \\
  \hat{H}^{n + 1, i}
  &= C_*(t_{(2i \, 2i + 1)}) H^n (\id^{\otimes i - 1} \otimes (C_*(t_{(2 3)}) EZ_{(2i - 1)(2i), (2i + 1)(2i + 2)}) \otimes \id^{\otimes n - i}), \\
  \hat{H}^{n + 1, n + 1} &= EZ_{1 \dotsm (2n), (2n + 1) (2n + 2)}(H^n \otimes EZ_{2n + 1,2n + 2} AW_{2n + 1,2n + 2}).
\end{align*}
Essentially, the $\hat{H}^{n + 1, i}$ come from different ways of applying $H^n$ to $n + 1$ $C_*(X \times X')$ arguments in order. The end cases apply $H^n$ to the first or the last $n$ arguments, and then use $EZ$ to combine the $H^n$ output with the remaining $C_*(X \times X')$ factor. The middle cases instead combine two adjacent arguments into one singular chain, and then apply $H^n$.

Let $\hat{H}^{n + 1} = \sum_{i = 0}^{n + 1} (-1)^{i + 1} \hat{H}^{n + 1, i}$. A computation shows that $d\hat{H}^{n + 1} = \hat{H}^{n + 1}d$, so $\hat{H}^{n + 1}$ is a natural chain map of degree $n$. By the naturality of $\hat{H}^{n + 1}$, acyclic models methods apply to show that $\hat{H}^{n + 1} = d H^{n + 1} - (-1)^{n + 1} H^{n + 1} d$ for some natural map $H^{n + 1}$ of degree $n + 1$, as specified above. Since
\[
  d H^1_{1,2} + H^1_{1,2}d = EZ_{1,2} AW_{1,2} - \id
  = \hat{H}^{1,1}_{1,2} - \hat{H}^{1,0}_{1,2} = \hat{H}^1,
\]
the base case $n = 0$ also satisfies the property that $d H^{n + 1} - (-1)^{n + 1} H^{n + 1} d = \hat{H}^{n + 1}$. Consequently, such natural $H^n$ maps exist for all $n \geq 0$.

For $n \geq 1$, let $f_{n + 1} = a (C_*((m_G^{n - 1} \times m_K^{n - 1}) t_{\sigma_n}) H^n_{G, K, \dotsc, G, K} \otimes \id)$, where $\sigma_n \in S_{2n}$ takes $(1, \dotsc, 2n)$ to $(1, 3, \dotsc, 2n - 1, 2, \dotsc, 2n)$. By the construction of the $H^n$, these $f_n$ are seen to satisfy the conditions needed for the $A_\infty$-module morphism. 

Since $f_1 = \id$, which is a quasi-isomorphism of chain complexes, $f$ is a quasi-iso\-morph-ism of $A_\infty$-modules.
\end{proof}
\end{prop}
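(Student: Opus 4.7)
The plan is to exploit the fact that $L$ and $M$ share the same underlying chain complex, so I would set $f_1 = \id_M$ and build the remaining levels of the $A_\infty$-module morphism $f$ out of a tower of natural higher homotopies that repair the failure of $\psi = EZ_{G,K} AW_{G,K}$ to respect iterated multiplication. The preceding lemma makes $\psi$ a strict DGA endomorphism of $A = C_*(G \times K)$, so when a single $a \in A$ acts on $m$ the compatibility is tautological (it \emph{is} the pullback action); the higher $f_n$ carry the coherence data that intervenes when two or more $A$-chains are multiplied before acting on $M$.

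First I would build by induction natural maps
\[
  H^n_{X_1,\ldots,X_{2n}}: C_*(X_1 \times X_2) \otimes \cdots \otimes C_*(X_{2n-1} \times X_{2n}) \longrightarrow C_*(X_1 \times \cdots \times X_{2n})
\]
of degree $n$, starting from $H^0 = \id$ and $H^1 = H$, the standard Eilenberg--Zilber homotopy with $dH + Hd = EZ \circ AW - \id$. Given $H^n$, I assemble a candidate natural degree-$n$ chain map $\hat{H}^{n+1}$ from $n+2$ signed pieces: the two extreme pieces apply $H^n$ to the first or last $n$ tensor factors and glue the leftover factor in via $EZ$ (after running it through $EZ \circ AW$); the $n$ middle pieces first fuse two adjacent $C_*(X \times X')$ factors into one via $EZ$ and a transposition of middle coordinates, then apply $H^n$. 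A Koszul-sign check shows $d\hat{H}^{n+1} = \hat{H}^{n+1} d$, and acyclic models on the category of tuples of CW complexes (with models the products of standard simplices) promotes $\hat{H}^{n+1}$ to an honest natural $H^{n+1}$ with $dH^{n+1} - (-1)^{n+1} H^{n+1}d = \hat{H}^{n+1}$.

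Next I would define $f_{n+1}: A^{\otimes n} \otimes L \to M$ for $n \geq 1$ by feeding the $n$ chains in $A$ into $H^n_{G,K,\ldots,G,K}$, applying a shuffle $\sigma_n$ that sorts the $G$-coordinates before the $K$-coordinates, multiplying back down via $m_G^{n-1}$ and $m_K^{n-1}$ to land in $G \times K$, and finally acting via $a_M$. The $A_\infty$-module morphism axiom at level $n+1$ then unfolds term by term into the relation $dH^{n+1} - (-1)^{n+1} H^{n+1}d = \hat{H}^{n+1}$: the middle $\hat{H}^{n+1,i}$ pieces correspond to internal multiplications of adjacent $A$-factors (using that $\psi$ is a DGA map), while the two extreme pieces reproduce the strict $L$-action $a_L = \psi^*a_M$ and the $a_M$-action precomposed with the lower-level $f_n$. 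The hard part will be the sign and permutation bookkeeping needed to verify $d\hat{H}^{n+1} = \hat{H}^{n+1}d$ and to line its pieces up precisely with the Koszul-signed $A_\infty$-morphism relations; once that is in place, the quasi-isomorphism statement is immediate because $f_1 = \id_M$ is already a quasi-isomorphism of chain complexes.
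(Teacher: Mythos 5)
Your proposal matches the paper's proof in all essentials: the same inductive acyclic-models construction of the natural degree-$n$ maps $H^n$, the same $(n+2)$-piece decomposition of $\hat H^{n+1}$, the same acyclic-models promotion to $H^{n+1}$, and the same definition of $f_{n+1}$ by running $H^n$, permuting with $\sigma_n$, multiplying back down via $m_G^{n-1}\times m_K^{n-1}$, and applying $a_M$. One small slip to flag: your parenthetical ``(after running it through $EZ\circ AW$)'' suggests both extreme pieces of $\hat H^{n+1}$ compose the leftover factor with $\psi = EZ\circ AW$, but in the paper only $\hat H^{n+1,n+1}$ does so (accounting for $a_L = \psi^*a_M$), while $\hat H^{n+1,0}$ glues via $EZ$ alone (matching the bare $a_M$-term); your later sentence correctly records this asymmetry, so this is just an internal inconsistency in the exposition rather than a gap in the argument.
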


\begin{corol}\label{corol:adjoint-a-infty-quasi-isom}
There is a quasi-isomorphism $q: (EZ\ad_0)^*(C_*X) \to \Ad(X)$ of $A_\infty$-mod\-ules over $C_*G$.
\begin{proof}
Note that $a = C_*(a_X) EZ_{G \times G^{\op}, X} (C_*(\id \times i) \otimes \id)$ gives $C_*(X)$ a left $C_*(G \times G)$-module structure such that the module structure of $\Ad(X)$ is given by $a (C_*\delta \otimes \id)$. Furthermore, the $C_*G$-action $a'$ of $(EZ \ad_0)^*(C_*X)$ is given by
\begin{align*}
  a' &=  C_*(a_X) EZ_{G \times G^{\op}, X} (EZ_{G, G^{\op}} \otimes \id) (\id \otimes C_*i \otimes \id) (AW_{G, G} \otimes \id)( C_*\delta \otimes \id) \\
  &= C_*(a_X) EZ_{G \times G^{\op}, X} (C_*(\id \times i) \otimes \id) (EZ_{G, G} \otimes \id)  (AW_{G, G} \otimes \id)( C_*\delta \otimes \id) \\
  &= a ((EZ_{G,G} AW_{G,G} C_*\delta) \otimes \id).
\end{align*}
The proposition above then applies to the $C_*(G \times G)$-module structures $a$ and $(EZ \circ AW)^* a$ on $C_*X$ to yield an $A_\infty$ quasi-isomorphism. Pulling this morphism back along the DGA morphism $C_*\delta: C_*G \to C_*(G \times G)$ yields the desired quasi-isomorphism of $A_\infty$-modules over $C_*G$.
\end{proof}
\end{corol}

Consequently, this quasi-isomorphism of $A_\infty$-modules over $C_*G$ induces a quasi-iso\-morph\-ism of chain complexes $B(k, C_*G, (EZ\ad_0)^*(C_*X)) \to B(k, C_*G, \Ad(X))$. A similar argument shows that there exists a quasi-isomorphism $(EZ \ad_0)^* C_*X \to \Ad(X)$ of right $A_\infty$-modules for $X$ with a right $G \times G^{\op}$-action. Connecting these isomorphisms yields the following:

\begin{thm}
$B(k, C_*G, \ad_0^* C_*G^e)$ and $B(C_*G, C_*G, C_*G)$ are homotopy equivalent as right $C_*G^e$-modules.
\begin{proof}
Proposition~\ref{prop:adjoint-wk-equivs-top-modules},  Corollary~\ref{corol:adjoint-a-infty-quasi-isom}, and the quasi-isomorphism $EZ: C_*G^e \to C_*(G \times G^{\op})$ combine to produce the following diagram of weak equivalences and isomorphisms of right $C_*(G)^e$-modules:
\[
  \xymatrix{
	B(k, C_*G, \ad_0^* C_*G^e) \ar[d]_{\simeq}^{B(\id, \id, EZ)} & B(C_*G, C_*G, C_*G) \ar[ddd]^{EZ}_{\simeq} \\
	EZ^*B(k, C_*G, (EZ \ad_0)^* C_*(G \times G^{\op})) \ar[d]^{B(\id, \id, q)}_{\simeq} \\
	EZ^*B(k, C_*G, \Ad(G \times G^{\op})) \ar[d]^{EZ}_{\simeq} \\
	EZ^*C_*(B(*, G, G \times G^{\op})) \ar@<1ex>[r]^-{C_*(\phi)}_-{\isom} & EZ^*C_*(B(G, G, G)) \ar@<1ex>[l]^-{C_*(\gamma)}
	}
\]
Consequently, $B(k, C_*G, \ad_0^* C_*G^e)$ and $B(C_*G, C_*G, C_*G)$ are related by a zigzag of weak equivalences of $C_*G^e$-modules. Since they are both semifree, and hence cofibrant, $C_*G^e$-modules, Proposition~\ref{prop:cof-wk-equiv-htpy-equiv} implies that they are in fact homotopy equivalent.
\end{proof}
\end{thm}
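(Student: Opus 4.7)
The plan is to assemble a zigzag of weak equivalences between $B(k, C_*G, \ad_0^* C_*G^e)$ and $B(C_*G, C_*G, C_*G)$ in the category of right $C_*G^e$-modules, and then appeal to Proposition~\ref{prop:cof-wk-equiv-htpy-equiv} to promote the zigzag to an honest homotopy equivalence. This last step is legitimate because both bar constructions are built from free $C_*G^e$-modules by iterated pushouts of generating cofibrations, so they are semifree and hence cofibrant.

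The zigzag itself will have three links. First, the chain-level Eilenberg--Zilber map provides a DGA quasi-isomorphism $EZ: C_*G^e \to C_*(G \times G^{\op})$, and applying $B(k, C_*G, -)$ to the induced restriction-of-scalars converts the ``algebraic'' coefficient module $\ad_0^* C_*G^e$ into $(EZ \circ \ad_0)^* C_*(G \times G^{\op})$, while preserving $C_*G^e$-structure via pullback. Second, I would feed this module through the $A_\infty$-quasi-isomorphism $q: (EZ\ad_0)^* C_*X \to \Ad(X)$ of Corollary~\ref{corol:adjoint-a-infty-quasi-isom}, specialized to $X = G \times G^{\op}$ with its standard right $G \times G^{\op}$-action; functoriality of the bar construction on $A_\infty$-module morphisms delivers a weak equivalence $B(k, C_*G, (EZ\ad_0)^* C_*(G \times G^{\op})) \to B(k, C_*G, \Ad(G \times G^{\op}))$, again of right $C_*G^e$-modules after pulling back along $EZ$.

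Third, Proposition~\ref{prop:adjoint-wk-equivs-top-modules} already supplies the compatible vertical $EZ$-equivalences from $B(k, C_*G, \Ad(G \times G^{\op}))$ down to $C_*B(*, G, G \times G^{\op})$ and from $B(C_*G, C_*G, C_*G)$ down to $C_*B(G, G, G)$. Horizontally, the $G \times G^{\op}$-equivariant homeomorphism $\phi^R$ of Proposition~\ref{prop:top-bar-group-homeo} identifies the two topological bar constructions as right $G \times G^{\op}$-spaces, and applying $C_*$ turns this into an isomorphism of right $C_*(G \times G^{\op})$-modules, which $EZ^*$ converts into an isomorphism of $C_*G^e$-modules. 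Stacking these pieces gives a single zigzag of weak equivalences between the two target modules.

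The main obstacle I anticipate is bookkeeping: ensuring that every arrow in the zigzag actually lives in right $C_*G^e$-modules rather than just $\Ch(k)$. The delicate step is the middle one, since $q$ is only an $A_\infty$-map, so I need to confirm that $B(k, C_*G, -)$ applied to an $A_\infty$-quasi-isomorphism of right modules gives a genuine quasi-isomorphism of $C_*G^e$-modules at the chain level; this should reduce to the fact that the higher structure maps of $q$ contribute only to the chain homotopies in $B$ and not to the module structure, which sits on the outermost tensor factor. Every other arrow is manifestly $C_*G^e$-linear by naturality of $EZ$ and $C_*$ on equivariant maps.
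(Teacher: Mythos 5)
Your proposal matches the paper's own proof essentially step for step: the same three-link zigzag (EZ on coefficients, the $A_\infty$-quasi-isomorphism $q$ fed through $B(k,C_*G,-)$, then the topological bar-construction identification via Proposition~\ref{prop:adjoint-wk-equivs-top-modules} and $\phi^R$ from Proposition~\ref{prop:top-bar-group-homeo}), finished by Proposition~\ref{prop:cof-wk-equiv-htpy-equiv} applied to the two cofibrant (semifree) $C_*G^e$-modules. The bookkeeping concern you flag about the middle arrow is the genuinely delicate point, and it is resolved as you suspect: the higher components of $q$ come from the natural maps $H^n$ of Proposition~\ref{prop:ez-aw-quasiiso}, whose naturality guarantees compatibility with the auxiliary right $C_*(G\times G^{\op})$-action carried on the coefficient factor, so $B(\id,\id,q)$ is a weak equivalence of right $C_*G^e$-modules and not merely of chain complexes.
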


We now complete the proof of Theorem~\ref{thm:hh-additive-isom}.

\begin{proof}[Theorem~\ref{thm:hh-additive-isom}]
By Corollary~\ref{corol:cg-hh-isoms}, Corollary~\ref{corol:adjoint-a-infty-quasi-isom}, and the naturality of this extended functoriality of $\Ext$ and $\Tor$ with respect to evaluation, we obtain the diagram
\[
\xymatrix{
	CH^{*}(C_*\Omega X) \ar[r]^-{\simeq} 
	& H^{\bullet}(X; \ad_0 C_*\Omega X^e)) \ar[d]^{\ev_{[X]}}_{\simeq} \ar[r]^{q_*}_{\simeq}
	& H^{\bullet}(X; \Ad(\Omega X)) \ar[d]^{\ev_{[X]}}_{\simeq} \\
	CH_{* + d}(C_*\Omega X) \ar[r]^-{\simeq} 
	& H_{\bullet + d}(X; \ad_0 C_*\Omega X^e)) \ar[r]^{q_*}_{\simeq}
	& H_{\bullet + d}(X; \Ad(\Omega X))
	}
\]
of weak equivalences. The outside of the diagram then provides the diagram of Theorem~\ref{thm:hh-additive-isom}.
\end{proof}

We note also that these techniques extend the multiplication map $\mu: C_*G \otimes C_*G \to C_*G$ to an $A_\infty$ map on $\Ad(G)$ that is compatible with the comultiplication on $C_*G$:

\begin{prop}\label{prop:a-inf-mult-ad}
There is a morphism of $A_\infty$-modules $\tilde{\mu}: \Delta^*(\Ad(G) \otimes \Ad(G)) \to \Ad(G)$ with $\tilde{\mu}_1 = \mu: C_*G \otimes C_*G  \to C_*G$.
\begin{proof}
Define $\tilde{\mu}$ as the following composite of $A$-module and $A_\infty$-$A$-module morphisms, where $f$ is the $A_\infty$-module morphism from Prop.~\ref{prop:ez-aw-quasiiso}:
\begin{align*}
  \Delta^*(\Ad(G) \otimes \Ad(G)) 
  &\xrightarrow{EZ} \Delta^* EZ^*(C_*(G^c \times G^c)) = C_*(\delta)^* (EZ \circ AW)^*(C_*(G^c \times G^c)) \\
  &\xrightarrow{f} C_*(\delta)^* (C_*(G^c \times G^c)) = C_*(\delta^*(G^c \times G^c)) \\
  &\xrightarrow{C_*(m)} C_*(G^c)
\end{align*}
Since $f_1 = \id$, $\tilde{\mu}_1 = C_*(m) \circ EZ = \mu$. 
\end{proof}
\end{prop}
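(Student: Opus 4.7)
The strategy is to factor an extended $\mu$ through $C_*(G \times G)$ equipped with its conjugation action, and then use the $A_\infty$-equivalence of Proposition~\ref{prop:ez-aw-quasiiso} to correct for the fact that the Eilenberg-Zilber map is not strictly compatible with the diagonal module structure on $\Ad(G) \otimes \Ad(G)$.

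First I would observe that the group multiplication $m \colon G \times G \to G$ is equivariant for the conjugation actions, since $m(gxg^{-1}, gyg^{-1}) = g\, m(x,y)\, g^{-1}$. Applying $C_*$, this produces a strict morphism $C_*(m) \colon C_*(\delta^*(G^c \times G^c)) \to \Ad(G)$ of left $C_*G$-modules, where on the left the conjugation action is pulled back along $C_*(\delta) \colon C_*G \to C_*(G \times G)$. To connect this to the intended source $\Delta^*(\Ad(G) \otimes \Ad(G))$, I would use the Alexander--Whitney identity $\Delta = AW \circ C_*(\delta)$ as DGA maps to rewrite
\[
  \Delta^* EZ^*(C_*(G^c \times G^c)) = C_*(\delta)^*(EZ \circ AW)^*(C_*(G^c \times G^c)),
\]
so that $EZ \colon C_*G \otimes C_*G \to C_*(G \times G)$, while only a lax module morphism for the original structures, becomes a strict chain map from $\Delta^*(\Ad(G) \otimes \Ad(G))$ into this repackaged pullback.

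Next I would invoke Proposition~\ref{prop:ez-aw-quasiiso} with $K = G$: this gives a natural quasi-isomorphism $f$ of $A_\infty$-modules over $C_*(G \times G)$ from $(EZ \circ AW)^*(C_*(G^c \times G^c))$ to $C_*(G^c \times G^c)$ with $f_1 = \id$. Pulling $f$ back along $C_*(\delta)$ yields an $A_\infty$-morphism of left $C_*G$-modules, and composing with the strict maps $EZ$ on the left and $C_*(m)$ on the right assembles $\tilde{\mu}$. Because $f_1 = \id$, the first level computes to $\tilde{\mu}_1 = C_*(m) \circ EZ = \mu$, as required.

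The main thing to watch is the bookkeeping identifying $\Delta^* EZ^*$ with $C_*(\delta)^*(EZ \circ AW)^*$ as left $C_*G$-module structures, which is ultimately the statement that the Hopf-algebra diagonal on $C_*G$ comes from the topological diagonal through $AW$; once this is in hand, the higher $A_\infty$-coherences are inherited directly from those produced in Proposition~\ref{prop:ez-aw-quasiiso} and no additional homotopies need to be built by hand.
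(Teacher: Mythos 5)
Your proposal is correct and takes essentially the same approach as the paper: factor through $C_*(G^c \times G^c)$ with its conjugation action, rewrite the source module structure via $\Delta^* EZ^* = C_*(\delta)^*(EZ \circ AW)^*$, and then use the $A_\infty$-quasi-isomorphism $f$ of Proposition~\ref{prop:ez-aw-quasiiso} (pulled back along $C_*(\delta)$) to bridge the gap between the two module structures, composing with the strict maps $EZ$ and $C_*(m)$. The only difference is that you spell out the justification for each piece — the conjugation-equivariance of $m$, the role of $\Delta = AW \circ C_*(\delta)$ — where the paper simply writes down the composite.
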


We use this multiplication map in Chapter~\ref{ch:bv} to relate the $D$ isomorphism to a suitable notion of cap product in Hochschild homology and cohomology.

\section{BV Algebra Structures}\label{ch:bv}

\subsection{Multiplicative Structures}\label{sec:ring-structs}

As before, now let $M$ be a closed, connected, $k$-oriented manifold of dimension $d$. We now show that the isomorphism between $H_{* + d}(LM)$ and $HH^*(C_*\Omega M)$ established in Corollary~\ref{corol:loop-homology-hh-isom} is one of rings, taking the Chas-Sullivan loop product on $H_{*+d}(LM)$ to the Hochschild cup product on $HH^*(C_*\Omega M)$. To do so, we examine a homotopy-theoretic construction of the Chas-Sullivan product on the spectrum $LM^{-TM}$ and relate it to the ring spectrum structure of the topological Hochschild cohomology of the suspension spectrum $S[\Omega M]$. Again treating $\Omega M$ as a topological group $G$, we use the function spectrum $F_G(EG_+, S[G^c])$ as an intermediary, and we adapt some of the techniques of Abbaspour, Cohen, and Gruher~\cite{abbaspour-cohen-gruher:2008} and Cohen and Klein~\cite{cohen-klein:2009} to compare the ring spectrum structures. Smashing with the Eilenberg-Mac Lane spectrum and passing back to the derived category of chain complexes over $k$ then recovers our earlier chain-level equivalences. 

\subsection{Fiberwise Spectra and Atiyah Duality}\label{app:spectra}

We review some of the fundamental constructions and theorems in the theory of fiberwise spectra discussed in~\cite{cohen-klein:2009}. 

\begin{defn}
Let $X$ be a topological space, and let $\Top/X$ be the category of spaces over $X$. Let $\category{R}_X$ be the category of retractive spaces over $X$: such a space $Y$ has maps $s_Y: X \to Y$ and $r_Y: Y \to X$ such that $r_Y s_Y = \id_X$. Both categories are enriched over $\category{Top}$. Furthermore, there is a forgetful functor $u_X: \category{R}_X \to \Top/X$ with a left adjoint $v_X$ given by $v_X(Y) = Y \amalg X$, with $X \to Y \amalg X$ the inclusion. We often denote $v_X(Y)$ as $Y_+$ when $X$ is clear from context.

Given $Y \in \Top/X$, its \term{unreduced fiberwise suspension} $S_XY$ is the double mapping cylinder $X \union Y \times I \union X$; note this determines a functor $S_X: \Top/X \to \Top/X$. Given $Y \in \category{R}_X$, its \term{(reduced) fiberwise suspension} $\Sigma_X Y$ is $S_X Y \union_{S_X X} X$. This construction also determines a functor $\Sigma_X: \category{R}_X \to \category{R}_X$. 

Given $Y, Z \in \category{R}_X$, define their \term{fiberwise smash product} $Y \wedge_X Z$ as the pushout of $X \leftarrow Y \union_X Z \to Y \times_X Z$, where the map $Y \union_X Z \to Y \times_X Z$ takes $y$ to $(y, s_Z r_Y y)$ and $z$ to $(s_Y r_Z z, z)$. The fiberwise smash product then defines a functor $\wedge_X: \category{R}_X \times \category{R}_X \to \category{R}_X$ making $\category{R}_X$ a symmetric monoidal category, with unit $S^0 \times X$. Furthermore, $\Sigma_X Y \isom (S^1 \times X) \wedge_X Y$. 
\end{defn}

The notion of fiberwise reduced suspension is key in constructing spectra fibered over $X$.

\begin{defn}
A \term{fibered spectrum} $E$ over $X$ is a sequence of objects $E_j \in \category{R}_X$ for $j \in \naturals$ together with maps $\Sigma_X E_j \to E_{j + 1}$ in $\category{R}_X$.

Given $Y \in \category{R}_X$, its fiberwise suspension spectrum is the spectrum $\Sigma^\infty_X Y$ with $j$th space defined by $\Sigma_X^jY$, with the structure maps given by the identification $\Sigma_X(\Sigma_X^jY) \isom \Sigma_X^{j + 1}Y$.
\end{defn}

Spectra fibered over $X$ form a model category, with notions of weak equivalences, cofibrations, and fibrations arising as in the context of traditional spectra (i.e., spectra fibered over a point $*$). Given a spectrum $E$ fibered over $X$, one can produce covariant and contravariant functors from $\Top/X$ to spectra as follows.

\begin{defn}\label{defn:hom-cohom-fw-spectra}
Let $E$ be a spectrum fibered over $X$, and take $Y \in \Top/X$. Assume $E$ to be fibrant in the model structure of such fibered spectra. Define the spectrum $H_\bullet(Y; E)$ to be the homotopy cofiber of the map $Y \to Y \times_X E$. Define $H^\bullet(Y; E)$ levelwise to be the space $\Hom_{\Top/X}(Y^c, E_j)$ in level $j$, where $Y^c$ is a functorial cofibrant replacement for $Y$ in the category $\Top/X$.

Since both of these constructions are functorial in $Y$, they determine functors $H_\bullet(-; E)$ and $H^\bullet(-; E)$ which we call \term{homology} and \term{cohomology with $E$-coefficients}.
\end{defn}

We use these notions of spectrum-valued homology and cohomology functors to express a form of Poincar\'e duality. First, however, we must explain how to twist a fibered spectrum over $X$ by a vector bundle over $X$.

\begin{defn}
Let $E$ be a spectrum fibered over $X$ and let $\xi$ be a vector bundle over $X$. Define the \term{twist of $E$ by $\xi$}, ${}^\xi E$, levelwise by $({}^\xi E)_j = S^\xi \wedge_X E_j$, where $S^\xi$ is the sphere bundle over $X$ given by one-point compactification of the fibers of $\xi$. By introducing suspensions appropriately, the twist of $E$ by a virtual bundle $\xi$ is defined analogously.
\end{defn}

Poincar\'e or Atiyah duality can now be expressed in the following form, with coefficients in a spectrum $E$ over $N$:

\begin{thm}[\cite{cohen-klein:2009}]\label{thm:fw-atiyah-duality}
Let $N$ be a closed manifold of dimension $d$ with tangent bundle $TN$, and let $-TN$ denote the virtual bundle of dimension $-d$ representing the stable normal bundle of $N$. Let $E$ be a spectrum fibered over $N$. Then there is a weak equivalence of spectra
\begin{equation}
  H_\bullet(N; {}^{-TN} E) \simeq H^\bullet(N; E).
\end{equation}
Furthermore, this equivalence is natural in $E$.
\end{thm}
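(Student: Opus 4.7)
The plan is to establish this equivalence by reducing to classical Atiyah duality via a cell-induction on $E$. I would first construct a natural comparison map $\Phi_E$, verify it on trivial fibered spectra (those pulled back from spectra over a point), and extend to arbitrary $E$ through a cofiber-sequence argument.

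To construct $\Phi_E$, I would fix a smooth embedding $N \hookrightarrow \reals^{d+n}$ for large $n$, with normal bundle $\nu$ of dimension $n$, so that stably $-TN \simeq \nu - \reals^{d+n}$. Applying a fiberwise Pontryagin-Thom collapse along a tubular neighborhood of the diagonal $N \subset N \times N$ yields a duality map of retractive spaces over $N$ that encodes Atiyah's coevaluation $\eta \colon S \to N^{-TN} \wedge N_+$. Smashing $\eta$ fiberwise with a fibrant model of $E$ and then adjoining over the fiberwise smash/internal-hom adjunction produces the desired natural transformation
\[
  \Phi_E \colon H_\bullet(N; {}^{-TN} E) \to H^\bullet(N; E).
\]

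The next step is to check $\Phi_E$ on trivial fibered spectra. Given an ordinary spectrum $F$, pull it back to a fibered spectrum $\pi^{*} F$ over $N$ via the projection $\pi \colon N \to \ast$. Direct unwinding of Definition~\ref{defn:hom-cohom-fw-spectra} gives $H_\bullet(N; {}^{-TN} \pi^{*} F) \simeq N^{-TN} \wedge F$ and $H^\bullet(N; \pi^{*} F) \simeq F(N_+, F)$, and under these identifications $\Phi_{\pi^{*} F}$ reduces to the smash of $F$ with the classical Atiyah duality map $N^{-TN} \to F(N_+, S)$. Since $N$ is a closed manifold, $N_+$ is Spanier-Whitehead dualizable and this smash is a weak equivalence.

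Finally, I would extend the equivalence from trivial to arbitrary $E$ by cell induction. The model category of fibered spectra is cofibrantly generated, with generating cofibrations obtained by suspending inclusions of the form $\pi^{*} S^{n-1} \hookrightarrow \pi^{*} D^n$. Both $H_\bullet(N; {}^{-TN} -)$ and $H^\bullet(N; -)$ are exact functors on the homotopy category: the former preserves cofiber sequences since it is defined as a homotopy cofiber and since the fiberwise twist preserves cofiber sequences; the latter is exact because mapping out of a cofibrant object in $\Top/N$ into a fibration in $\category{R}_N$ remains a fibration. Combined with the fact that $H^\bullet$ commutes with the relevant filtered homotopy colimits (using that $N$ is compact), a standard five-lemma argument along the cell filtration of $E$ promotes $\Phi_E$ from an equivalence on trivial spectra to an equivalence for all fibered $E$. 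Naturality in $E$ is preserved at every stage of the induction.

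The principal technical hurdle will be making the comparison map $\Phi_E$ precise enough that it simultaneously (a) identifies with classical Atiyah duality on trivial spectra and (b) is manifestly natural and exact in $E$. Handling the fibrant replacement $Y^c$ implicit in $H^\bullet(N; E)$, and ensuring the fiberwise Thom construction for $-TN$ interacts correctly with cofiber sequences and filtered colimits of retractive spaces, is where most of the technical effort is concentrated.
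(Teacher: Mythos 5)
The paper does not argue for this statement; it cites it directly from Cohen--Klein~\cite{cohen-klein:2009}, so there is no in-text proof to compare to, and I am only assessing whether your proposal stands on its own.

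Your construction of $\Phi_E$ from the fiberwise coevaluation is the right starting point, and your base-case computation for pulled-back spectra, $H_\bullet(N;{}^{-TN}\pi^*F)\simeq N^{-TN}\wedge F$ and $H^\bullet(N;\pi^*F)\simeq F(N_+,F)$, is correct. The gap is in the cell induction. The generating cofibrations of fibered spectra over $N$ are not of the form $\pi^*S^{n-1}\hookrightarrow\pi^*D^n$. A generating cofibration in $\Top/N$ (and hence, after fiberwise suspension, in fibered spectra) is a pair $(S^{n-1}\hookrightarrow D^n)$ together with an arbitrary map $D^n\to N$; such a cell is supported over a contractible neighborhood of a single point of $N$, not spread over all of $N$ as $\pi^*D^n=D^n\times N$ is. Under the standard equivalence between fibered spectra over a connected $N$ and $\Sigma^\infty_+\Omega N$-modules, the point-supported sphere corresponds to the free rank-one module, which generates, while $\pi^*S$ corresponds to the trivial module $S$, which does not; the localizing subcategory generated by pulled-back spectra is therefore strictly smaller than the whole category, and a five-lemma argument along your filtration never reaches a general $E$.

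Two repairs are available. You could rerun the induction over the correct, point-supported cells, in which case the base case for a fiber at $y\in N$ compares $\Sigma^{-d}$ of that fiber with a mapping spectrum out of $N$ relative to $y$, and reduces to Poincar\'e--Lefschetz duality. Or, closer in spirit to how you built $\Phi_E$ in the first place, drop the induction entirely: once $\eta\colon S\to N^{-TN}\wedge N_+$ is supplied together with a compatible evaluation satisfying the triangle identities, $N_+$ is dualizable with dual $N^{-TN}$ and the equivalence follows for every coefficient $E$ simultaneously, with naturality in $E$ automatic. The compactness of $N$ is where the real content sits in either version; you should make explicit that it is what yields dualizability, and, in the inductive version, what makes $H^\bullet(N;-)$ commute with filtered homotopy colimits.
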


\subsubsection{The Chas-Sullivan Loop Product}\label{ssec:cs-loop-product}

We recall a homotopy-theoretic construction of the Chas-Sullivan loop product from \cite{cohen-jones:2002} in terms of umkehr maps on generalized Thom spectra, and we then illustrate how this loop product is expressed in \cite{cohen-klein:2009} using fiberwise spectra and fiberwise Atiyah duality. 

Let $M$ be a smooth, closed $d$-manifold, and note that $LM$ is a space over $M$ via the evaluation map at $1 \in S^1$, $\ev: LM \to M$. Let $L_\infty M$ be the space of maps of the figure-eight, $S^1 \vee S^1$, into $M$. Then
\[
  \xymatrix{
	L_\infty M \ar[r]^-{\tilde{\Delta}} \ar[d]^{\ev} & LM \times LM \ar[d]^{\ev \times \ev} \\
	M \ar[r]^-{\Delta} & M \times M
}
\]
is a pullback square. Furthermore, the basepoint-preserving pinch map $S^1 \to S^1 \vee S^1$ induces a map $\gamma$ of spaces over $M$:
\[
  \xymatrix{
	L_\infty M \ar[r]^-{\gamma} \ar[d]^{\ev} & LM\ar[d]^{\ev} \\
	M \ar@{=}[r] & M
  }
\]
Since the map $\tilde{\Delta}$ is the pullback of a finite-dimensional embedding of manifolds, it induces a collapse map
\[
  \Delta^!: (LM \times LM)_+ \to L_\infty M^{\nu_\Delta},
\]
where $\nu_\Delta$ here is the pullback along $\ev$ of the normal bundle $\nu_\Delta$ to the embedding $\Delta: M \to M \times M$. This normal bundle is isomorphic to $TM$, the tangent bundle to $M$. This collapse map is compatible with the formation of the Thom spectra of a stable vector bundle $\xi$ on $LM \times LM$. Taking $\xi = -TM \times -TM$, and noting that $\Delta^*(-TM \times -TM) = -TM \oplus -TM$, this gives an umkehr map
\[
  \Delta^!: (LM \times LM)^{-TM \times -TM} \to L_\infty M^{-TM}.
\]
Composing $\Delta^!$ with the smash product map $LM^{-TM} \wedge LM^{-TM} \to (LM \times LM)^{-TM \times -TM}$ and the map $\gamma^{-TM}: L_\infty M^{-TM} \to LM^{-TM}$ induced by $\gamma$ gives a homotopy-theoretic construction of the loop product
\[
  \circ: LM^{-TM} \wedge LM^{-TM} \to LM^{-TM}.
\]
A $k$-orientation of $M$ induces a Thom isomorphism $LM^{-TM} \wedge Hk \isom \Sigma^{-d} \Sigma^\infty LM_+ \wedge Hk$, so passing to spectrum homotopy groups gives the loop product on homology with the expected degree shift.

We now consider this loop product from the perspective of fiberwise spectra. Since $\ev: LM \to M$ makes $LM$ a space over $M$, $LM_+ = LM \amalg M$ is a retractive space over $M$, and iterated fiberwise suspensions of $LM$ over $M$ produce a fiberwise spectrum $\Sigma_M^\infty LM_+$ over $M$. Recall from Definition~\ref{defn:hom-cohom-fw-spectra} that for a spectrum $E$ fibered over $X$ and a space $Y$ over $X$, $H_\bullet(Y; E) = (Y \times_X E) \union CY$ and $H^\bullet(Y; E)$ is the spectrum of maps $\Map_X(Y, E^f)$ of $Y$ over $X$ into a fibrant replacement for $E$. 

\begin{prop}[\cite{cohen-klein:2009}]\label{prop:lmtm-spectrum-product}
As spectra, $LM^{-TM} \simeq H_\bullet(M; {}^{-TM}\Sigma_M^\infty LM_+)$. By fiberwise Atiyah duality,
\[
  LM^{-TM} \simeq H^\bullet(M; \Sigma_M^\infty LM_+).
\]
Since $LM$ is a fiberwise $A_\infty$-monoid over $M$, $\Sigma_M^\infty LM_+$ is a fiberwise $A_\infty$-ring spectrum, and so the spectrum of sections $H^\bullet(M; \Sigma_M^\infty LM_+)$ is also a ring spectrum. The Chas-Sullivan loop product on $LM^{-TM}$ arises as the induced product on $LM^{-TM}$. 
\begin{proof}
We check that $LM^{-TM} \simeq H_\bullet(M; {}^{-TM}\Sigma_M^\infty LM_+)$. Let $\nu$ be an $(L - d)$-dimensional normal bundle for $M$. The $(j + L)$th space of $LM^{-TM}$ is then the Thom space $LM^{\ev^*(\nu) \oplus \epsilon^j}$, the one-point compactification of $\ev^*(\nu) \oplus \epsilon^j$. The $(j + L)$th space of ${}^{-TM}\Sigma_M^\infty LM_+$ is $S^j \wedge S^\nu \wedge_M LM_+$, which is seen to be the fiberwise compactification of $LM^{\ev^*(\nu) \oplus \epsilon^j}$ over $M$. Applying $H_\bullet(M; -)$ attaches the cone $CM$ to this space along the $M$-section of basepoints added by the fiberwise compactification, thus making a space homotopy equivalent to $LM^{\ev^*(\nu) \oplus \epsilon^j}$.

Fiberwise Atiyah duality then shows that $LM^{-TM} \simeq H^\bullet(M; \Sigma_M^\infty LM_+)$.

We compare each step of the original $LM^{-TM}$ construction of the loop product to the ring spectrum structure on $H^\bullet(M; \Sigma_M^\infty LM_+)$. First, since $\Sigma_M^\infty LM_+ \wedge \Sigma_M^\infty LM_+$ is naturally isomorphic to $\Sigma_{M \times M}^\infty L(M \times M)_+$ as spectra fibered over $M \times M$, the square
\[
  \xymatrix{
	LM^{-TM} \wedge LM^{-TM} \ar[r]^-{\wedge} \ar[d]^{\simeq} & 
	(L(M \times M))^{-T(M \times M)} \ar[d]^{\simeq} \\
	\Gamma(\Sigma_M^\infty LM_+) \wedge \Gamma(\Sigma_M^\infty LM_+) \ar[r]^-{\wedge} & 
	\Gamma(\Sigma_{M \times M}^\infty L(M \times M)_+)
}
\]
commutes. Next, pullback along $\Delta$ induces a map of spectra
\[
  \Delta^\bullet: H^\bullet(M \times M, \Sigma^\infty_{M \times M}L(M \times M)_+) \to H^\bullet(M, \Sigma^\infty_{M \times M}L(M \times M)_+).
\]
The universal property of the pullback $L_\infty M$ induces a homeomorphism between the spaces $\Map_{M \times M}(M, L(M \times M)_+)$ and $\Map_M(M, L_\infty M_+)$, and thus an equivalence 
\[
 H^\bullet(M, \Sigma^\infty_{M \times M}L(M \times M)_+) \simeq H^\bullet(M, \Sigma^\infty_M L_\infty M_+).
\]
Hence, the umkehr map diagram
\[
  \xymatrix{
	(LM \times LM)^{-T(M \times M)} \ar[r]^-{\Delta^!} \ar[d]^{\simeq} &
	L_\infty M^{-TM} \ar[d]^{\simeq} \\
	H^\bullet(M, \Sigma_{M \times M}^\infty L(M \times M)_+) \ar[r]^-{\Delta^\bullet} &
	H^\bullet(M, \Sigma^\infty_M L_\infty M_+)
}
\]
commutes. Finally, by the naturality of fiberwise Atiyah duality in the spectrum argument, the diagram
\[
  \xymatrix{
	L_\infty M^{-TM} \ar[r]^{\gamma^{-TM}} \ar[d]^{\simeq} &
	LM^{-TM} \ar[d]^{\simeq} \\
	H^{\bullet}(M, \Sigma^\infty_M L_\infty M_+) \ar[r]^{\gamma^{\bullet}} &
	H^{\bullet}(M, \Sigma^\infty_M LM_+)
}
\]
commutes.
\end{proof}
\end{prop}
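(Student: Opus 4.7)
The plan is to establish the two equivalences spacewise and then carry out the multiplicative comparison one step of the Cohen--Jones construction at a time.

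First, I would unpack definitions to prove $LM^{-TM} \simeq H_\bullet(M; {}^{-TM}\Sigma_M^\infty LM_+)$. Choose an $(L-d)$-dimensional normal bundle $\nu$ of $M$. The $(j+L)$th space of $LM^{-TM}$ is the ordinary one-point compactification of $\ev^*(\nu) \oplus \epsilon^j$, while the $(j+L)$th space of ${}^{-TM}\Sigma_M^\infty LM_+$ should be identified as the fiberwise one-point compactification $S^j \wedge_M S^\nu \wedge_M LM_+$ of the same bundle, which carries an extra section of basepoints. Applying $H_\bullet(M; -)$ attaches a cone $CM$ along this section, collapsing it, and up to homotopy equivalence recovers the ordinary Thom space. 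This gives the first equivalence levelwise; naturality in $j$ with respect to the structure maps assembles it into an equivalence of spectra. The second equivalence $LM^{-TM} \simeq H^\bullet(M; \Sigma_M^\infty LM_+)$ is then immediate from fiberwise Atiyah duality (Theorem~\ref{thm:fw-atiyah-duality}) applied to the evaluation fibration $\ev: LM \to M$ with $E = \Sigma_M^\infty LM_+$.

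For the ring structure, fiberwise loop concatenation (over the shared basepoint recorded by $\ev$) makes $LM$ a fiberwise $A_\infty$-monoid over $M$, so $\Sigma_M^\infty LM_+$ is a fiberwise $A_\infty$-ring spectrum, and taking sections $H^\bullet(M; -)$ gives an $A_\infty$-ring spectrum in the ordinary sense. The main work, and I expect the main obstacle, is to match this multiplication with the Cohen--Jones construction of the Chas--Sullivan product, which factors as the smash product $LM^{-TM} \wedge LM^{-TM} \to (LM \times LM)^{-TM \times -TM}$, the umkehr map $\Delta^!$ for the embedding $L_\infty M \hookrightarrow LM \times LM$ pulled back from $\Delta: M \hookrightarrow M \times M$, and the map $\gamma^{-TM}$ induced by the pinch map. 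I would build three commuting squares, one for each step, with the top row the Cohen--Jones map and the bottom row its section-spectrum counterpart.

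The smash-product square follows from the natural isomorphism $\Sigma_M^\infty LM_+ \wedge \Sigma_M^\infty LM_+ \cong \Sigma_{M \times M}^\infty L(M \times M)_+$ of spectra over $M \times M$, and the $\gamma$-square follows from naturality of fiberwise Atiyah duality in the spectrum argument together with the universal property of the pullback $L_\infty M$, which gives a homeomorphism $\Map_{M\times M}(M, L(M \times M)_+) \cong \Map_M(M, L_\infty M_+)$. The hard square is the umkehr square: I need to identify the Pontryagin--Thom collapse $\Delta^!$ with the pullback $\Delta^\bullet$ on sections. The key point is that fiberwise Atiyah duality is natural in maps of the base manifold, and that the twist by $-TM$ on the section-spectrum side absorbs precisely the normal bundle $\nu_\Delta \cong TM$ that appears when one forms the umkehr map of $\Delta$, so that pullback of sections along $\Delta$ corresponds under duality to Pontryagin--Thom collapse. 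Chaining the three squares together yields the desired identification of the ring structures.
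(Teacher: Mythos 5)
Your proposal reproduces the paper's argument step for step: levelwise Thom-space identification for $LM^{-TM} \simeq H_\bullet(M; {}^{-TM}\Sigma_M^\infty LM_+)$, fiberwise Atiyah duality for the section-spectrum description, and the three commuting squares tracking each stage of the Cohen--Jones product. One small bookkeeping slip: the homeomorphism $\Map_{M\times M}(M, L(M \times M)_+) \cong \Map_M(M, L_\infty M_+)$ coming from the universal property of $L_\infty M$ is used in the paper to identify the \emph{target} of $\Delta^\bullet$ as $H^\bullet(M, \Sigma^\infty_M L_\infty M_+)$ inside the umkehr square, whereas you assign it to the $\gamma$-square (which in fact only needs naturality of the duality equivalence in the spectrum argument); relocating it does not affect correctness, but without it your umkehr square does not have a well-defined bottom-right corner.
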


\subsubsection{Ring Spectrum Equivalences}\label{ssec:ring-spectrum-equivs}

For notational simplicity, let $G$ be a topological group replacement for $\Omega X$. Furthermore, if $Y$ is an unbased space, we follow Klein~\cite{klein:2001} in letting $S[Y]$ denote the fibrant replacement of the suspension spectrum of $Y_+$. Thus, the $j$th space of $S[Y]$ is $Q(S^j \wedge Y_+)$, where $Q = \Omega^\infty\Sigma^\infty$ is the stable homotopy functor. Furthermore, we let $E^f$ denote a fibrant replacement for a spectrum $E$ fibered over a space $Z$; if the fibers are suspension spectra $\Sigma^\infty Y_+$, then the fibers of $E^f$ may be taken to be $S[Y]$. 

We establish spectrum-level analogues of the Goodwillie isomorphism $BFG$ and the isomorphism $\Lambda_*(G, M)$.

\begin{prop}
There are equivalences of spectra $\Gamma: S[LM] \to S[G] \wedge_G EG_+$ and $\Lambda_\bullet: S[G] \wedge_G EG_+ \to THH^S(S[G])$.
\begin{proof}
We first establish the equivalence $\Gamma$. Since $G \simeq \Omega M$, $M \simeq BG$. Furthermore, $LM \simeq LBG$ over this equivalence, and so $\Sigma^\infty LM_+ \simeq \Sigma^\infty LBG_+$. Next, the well-known homotopy equivalence $LBG \simeq G^c \times_G EG$ shows that 
\[
  \Sigma^\infty LBG_+ \simeq \Sigma^\infty(G^c \times_G EG)_+.
\]
Passing to fibrant replacements then gives $S[LM] \simeq S[G^c] \wedge_G EG_+$.

Take $B(G, G, *) = |B_\bullet(G, G, *)|$ as a model for $EG$. Then 
\[
  W_n = S[G^c] \wedge_G B_n(G, G, *)_+
\]
determines a simplicial spectrum with $|W_\bullet| = S[G^c] \wedge_G EG_+$. Likewise, 
\[
  V_n = S[G] \wedge_{G \times G^{\op}} B_n(G, G, G)_+
\]
determines a simplicial spectrum such that $|V_\bullet| \simeq THH^S(S[G])$. Hence, we show there is an isomorphism $\chi_\bullet: W_\bullet \xrightarrow{\isom} V_\bullet$ of simplicial spectra. In fact, this map is the composite of the isomorphism
\[
  S[G^c] \wedge_G B_n(G, G, *)_+ \isom S[G] \wedge_{G \times G^{\op}} B_n(G \times G^{\op}, G, *)_+
\]
and $S[G] \wedge_{G \times G^{\op}} \phi^L_\bullet$, where $\phi^L_\bullet$ is the simplicial homeomorphism $B_\bullet(G \times G^{\op}, G, *) \to B_\bullet(G, G, G)$ of Proposition~\ref{prop:top-bar-group-homeo}. Explicitly, the $\chi_n$ are given by
\[
  \chi_n(a \wedge [g_1 \mid \dotsb \mid g_n]) = (g_1 \dotsm g_n) a \wedge [g_1 \mid \dotsb \mid g_n].
\]
\end{proof}
\end{prop}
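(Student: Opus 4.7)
The plan is to build the two equivalences independently and then combine them. For $\Gamma$, the strategy is to pass through the classifying space: since $G$ is a topological group replacement for $\Omega M$, we have $M \simeq BG$ and hence $LM \simeq LBG$. The well-known homotopy equivalence $LBG \simeq G^c \times_G EG$, where $G^c$ denotes $G$ with the conjugation action of $G$ on itself, is the key geometric input. Applying the (unreduced) suspension spectrum functor and the natural identification $\Sigma^\infty_+(G^c \times_G EG) \simeq \Sigma^\infty_+ G^c \wedge_G EG_+$ (which holds because smashing with a free $G$-spectrum commutes with orbits), followed by fibrant replacement, produces $\Gamma: S[LM] \to S[G] \wedge_G EG_+$. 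Each step is a weak equivalence, so the composite is one as well.

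For $\Lambda_\bullet$ I would argue simplicially. Take $EG = |B_\bullet(G,G,\ast)|$ as the standard bar model, and realize $S[G] \wedge_G EG_+$ as the geometric realization of the simplicial spectrum $W_n = S[G^c] \wedge_G B_n(G,G,\ast)_+$. On the other side, model $THH^S(S[G])$ as the realization of the cyclic bar construction $V_n = S[G] \wedge_{G \times G^{\op}} B_n(G,G,G)_+$, using the $G \times G^{\op}$-action on $S[G]$ by left and right multiplication. The plan is then to produce a levelwise isomorphism $\chi_n: W_n \to V_n$ of simplicial spectra and take its realization. The construction of $\chi_n$ factors in two stages: first, the conjugation action of $G$ on $G^c$ can be converted into the bi-action of $G \times G^{\op}$ by means of the standard identification $S[G^c] \wedge_G X_+ \cong S[G] \wedge_{G \times G^{\op}} ((G \times G^{\op}) \times_G X)_+$ applied to $X = B_n(G,G,\ast)$, giving $W_n \cong S[G] \wedge_{G \times G^{\op}} B_n(G \times G^{\op}, G, \ast)_+$. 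Second, one smashes over $G \times G^{\op}$ with the simplicial homeomorphism $\phi^L_\bullet: B_\bullet(G \times G^{\op}, G, \ast) \to B_\bullet(G,G,G)$ of Proposition~\ref{prop:top-bar-group-homeo}, landing in $V_n$.

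The main obstacle I anticipate is bookkeeping: one must check that both identifications respect the face and degeneracy operators of the two simplicial structures, so that $\chi_\bullet$ is genuinely a map of simplicial spectra and not merely a levelwise one. Concretely, $\phi^L_\bullet$ is already known to be a simplicial isomorphism equivariant for $G \times G^{\op}$, so the second stage is automatic; the subtlety is that the first identification interacts nontrivially with the outer face $d_n$ (which uses the action on $\ast$ on the left versus the bi-action on the right after translation). Writing out the simplices in coordinates $a \wedge [g_1 \mid \dotsb \mid g_n]$ and tracking where the conjugation twist $(g_1 \dotsm g_n)$ appears on passage through $\phi^L$ should show that the face maps match up, so the levelwise isomorphisms assemble into a simplicial isomorphism as required.

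Finally, realizing $\chi_\bullet$ yields the equivalence $\Lambda_\bullet: S[G] \wedge_G EG_+ \to THH^S(S[G])$. It is worth observing that this spectrum-level construction is a direct homotopy-theoretic lift of the chain-level equivalence $\Lambda_\bullet(G, M)$ of Corollary~\ref{corol:cg-hh-isoms} (specialized to $M = C_*G$), and that smashing with $Hk$ and passing to chain complexes recovers the latter. This compatibility provides a useful sanity check and motivates why the simplicial construction above should produce the ``correct'' map.
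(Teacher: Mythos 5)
Your proposal matches the paper's proof essentially line for line: the same chain $M \simeq BG$, $LM \simeq LBG \simeq G^c \times_G EG$, the same simplicial models $W_\bullet$ and $V_\bullet$, and the same two-stage factorization of $\chi_\bullet$ through the induction-to-$G\times G^{\op}$ identification followed by $S[G] \wedge_{G \times G^{\op}} \phi^L_\bullet$. The bookkeeping concern you flag is indeed the content of checking simplicial compatibility, which the paper handles by citing Proposition~\ref{prop:top-bar-group-homeo} and writing out the explicit formula $\chi_n(a \wedge [g_1 \mid \dotsb \mid g_n]) = (g_1 \dotsm g_n) a \wedge [g_1 \mid \dotsb \mid g_n]$, just as you suggest.
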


We also produce spectrum-level analogues of the weak equivalences among $C_{* + d}(LM)$, $R\Hom_{C_*G}^*(k, \Ad(G))$, and $CH^*(C_*G)$. Westerland has shown~\cite{westerland:2006} that $F_G(EG_+, \Sigma^\infty G^c_+)$ is a ring spectrum for $G$ a general topological group, and the topological Hochschild cohomology $THH_S(S[G])$ of the ring spectrum $S[G]$ is likewise well-known to be a ring spectrum itself. The composite isomorphism should be equivalent to Klein's~\cite{klein:2006} equivalence of spectra $(LX)^{-\tau_X} \simeq THH_S(S[\Omega X])$ for a Poincar\'e duality space $(X , \tau_X)$, although we make more of the ring structure explicit here.

We first relate $LM^{-TM}$ and $F_G(EG_+, S[G^c])$ as ring spectra.

\begin{prop}
There is an equivalence of ring spectra $\Psi: LM^{-TM} \to F_G(EG_+, S[G^c])$.
\begin{proof}
Recall from Proposition~\ref{prop:lmtm-spectrum-product} that the spectrum 
\[
  LM^{-TM} \simeq H_\bullet(M; {}^{-TM}\Sigma^\infty_M LM) \simeq H^\bullet(M; \Sigma^\infty_M LM).
\]
Since $LM \simeq LBG$ over the equivalence $M \simeq BG$, $H^\bullet(M; \Sigma^\infty_M LM)$ and $H^\bullet(BG; \Sigma^\infty_{BG} LBG)$ are equivalent spectra. Since $LBG \simeq G^c \times_G EG$, this is equivalent to $H^\bullet(BG; \Sigma^\infty_{BG} (G^c \times_G EG)_+)$. The $j$th level of the target spectrum $\Sigma^\infty_{BG} (G^c \times_G EG)_+$ is the space 
\[
  (S^j \times BG) \wedge_{BG} ((G^c \times_G EG) \amalg BG) \isom (S^j \wedge G^c_+) \times_G EG.
\]
Since the $H^\bullet$ construction implicitly performs a fibrant replacement on its target, the $j$th space of the spectrum $H^\bullet(BG; \Sigma^\infty_{BG} (G^c \times_G EG)_+)$ is $\Map_{BG}(BG, Q(S^j \wedge G^c_+) \times_G EG)$. 

For a $G$-space $Y$, to pass from $\Map_{BG}(BG, Y \times_G EG)$ to $\Map_G(EG, Y)$, we form the pullback diagram
\[
  \xymatrix{
	& Y \ar@{=}[r] \ar[d] & Y \ar[d] \\
	G \ar@{=}[d] \ar[r] & (Y \times_G EG) \times_{BG} EG \ar[r] \ar[d] & Y \times_G EG \ar[d] \\
	G \ar[r] & EG \ar[r]^{/G} \ar@/_12pt/[u]_{\tilde{\sigma}} & BG \ar@/_12pt/[u]_{\sigma}
}
\]
Note that $(Y \times_G EG) \times_{BG} EG$ has a left $G$-action coming from the right $EG$ factor. By pullback, $\sigma \in \Map_{BG}(BG, Y \times_G EG)$ determines a section $\tilde{\sigma} \in \Map_{EG}(EG, (Y \times_G EG) \times_{BG} EG)$, with $\tilde{\sigma}(e) = (\sigma([e]), e)$. Since $\tilde{\sigma}(ge) = (\sigma([ge]), ge) = g \cdot (\sigma([e]), e)$, $\tilde{\sigma}$ is $G$-equi\-var\-i\-ant.

Since $EG$ is a free $G$-space, $(Y \times_G EG) \times_{BG} EG$ is homeomorphic to $Y \times EG$ by the map taking $([y, e], e)$ to $(y, e)$. Furthermore, $Y \times EG$ has a left $G$-action given by $\Delta_G^*(i^*Y \times EG)$, where the pullback $i^*$ by the inverse map $i$ for $G$ converts the right $G$-space $Y$ into a left $G$-space. Since
\[
  g \cdot ([y,e], e) = ([yg^{-1}, ge], ge) \mapsto (yg^{-1}, ge) = g\cdot (y,e), 
\]
this homeomorphism is $G$-equivariant with the above left $G$-action on $(Y \times_G EG) \times_{BG} EG$. Hence, $\tilde{\sigma}$ corresponds to a $G$-equivariant section $\sigma' \in \Map_{EG}(EG, Y \times EG)$. Since $Y \times EG$ is a product, $\sigma'$ is determines by the projections $\pi_{EG} \circ \sigma' = \id_{EG}$ and $\pi_Y \circ \sigma'$, both of which are $G$-equivariant maps. Consequently, we obtain the homeomorphism $\Map_{BG}(BG, Y \times_G EG) \isom \Map_G(EG, Y)$.  

Applying this correspondence levelwise with $Y = Q(S^j \wedge G^c_+)$, this space of sections is homeomorphic to $\Map_G(EG, Q(S^j \wedge G^c_+))$, the $j$th space of $F_G(EG_+, S[G^c])$.

We now show that under these equivalences, the product on $LM^{-TM}$ coincides with that on $F_G(EG_+, S[G^c])$. 
From above, the product on $LM^{-TM}$ is equivalent to that on $H^\bullet(M; \Sigma^\infty_M LM_+)$, given by
\begin{multline*}
  H^\bullet(M; \Sigma^\infty_M LM_+)^{\wedge 2} 
  \xrightarrow{\wedge} H^\bullet(M \times M; \Sigma^\infty_{M \times M} (LM \times LM)_+) \\
  \xrightarrow{\Delta^\bullet_M} H^\bullet(M; \Sigma^\infty_M(L_\infty M)_+)
  \xrightarrow{\gamma_*} H^\bullet(M; \Sigma^\infty_M (LM)_+).
\end{multline*}
Since $LM \simeq LBG$ over $M \simeq BG$, and since $LBG \simeq G^c \times_G EG$ as fiberwise monoids over $BG$~\cite[App.~A]{gruher:2007b}, this sequence is equivalent to
\begin{multline*}
  H^\bullet(BG; \Sigma^\infty_{BG}(G^c \times_G EG)_+)^{\wedge 2}
  \xrightarrow{\wedge} H^\bullet(B(G \times G), \Sigma^\infty_{B(G \times G)}((G^c \times G^c) \times_{G \times G} E(G \times G))_+) \\
  \xrightarrow{B(\Delta_G)^\bullet} H^\bullet(BG, \Sigma^\infty_{BG}(\Delta^*_G(G^c \times G^c) \times_G EG)_+)
  \xrightarrow{\mu_*} H^\bullet(BG, \Sigma^\infty_{BG}(G^c \times_G EG)_+)
\end{multline*}
Finally, passing to equivariant maps into the fibers, this sequence is equivalent to
\begin{multline*}
  F_G(EG_+, S[G^c])^{\wedge 2}
  \xrightarrow{\wedge} F_{G \times G}(E(G \times G)_+, S[G^c \times G^c]) \\
  \xrightarrow{E(\Delta_G)^* \circ \Delta_G^*} F_G(EG_+, S[\Delta^*_G(G^c \times G^c)]))
  \xrightarrow{S[\mu]_*} F_G(EG_+, S[G^c]).
\end{multline*}
This is the description given by Westerland~\cite{westerland:2006} of the ring structure of $F_G(EG_+, S[G^c])$. 
\end{proof}
\end{prop}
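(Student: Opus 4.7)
The plan is to unfold $LM^{-TM}$ using the fiberwise Atiyah duality of Proposition~\ref{prop:lmtm-spectrum-product}, transport the resulting section spectrum across the equivalence $M \simeq BG$, and then identify it with $F_G(EG_+, S[G^c])$ by rewriting sections as $G$-equivariant maps. Tracking the multiplicative structure through each stage is the whole point, so I would build $\Psi$ as a zigzag in which every arrow visibly respects products.

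First I would invoke $LM^{-TM} \simeq H^\bullet(M; \Sigma^\infty_M LM_+)$ from Proposition~\ref{prop:lmtm-spectrum-product}, where the Chas-Sullivan product is realized through the composite of an external smash of section spectra, the umkehr $\Delta_M^\bullet$ restricting sections along $\Delta_M : M \to M \times M$, and the pinch $\gamma$. Under $M \simeq BG$, $LM$ corresponds to $LBG$, and the classical identification $LBG \simeq G^c \times_G EG$ (e.g.\ from Gruher's appendix) holds as an equivalence of fiberwise $A_\infty$-monoids over $BG$. So the section spectrum is equivalent to $H^\bullet(BG; \Sigma^\infty_{BG}(G^c \times_G EG)_+)$ as a ring spectrum.

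Next I would carry out the descent identification $\Map_{BG}(BG, Y \times_G EG) \isom \Map_G(EG, Y)$ for a $G$-space $Y$. A section $\sigma$ pulls back along $EG \to BG$ to a $G$-equivariant section $\tilde\sigma$ of $(Y \times_G EG) \times_{BG} EG \to EG$; since the $G$-action on $EG$ is free, this pullback is $G$-equivariantly homeomorphic to $Y \times EG$ (where the $G$-action on $Y$ is the one obtained by composing with inversion, as encoded by $G^c$). Projecting to $Y$ produces the desired equivariant map, with a manifest inverse. Applying this levelwise with $Y_j = Q(S^j \wedge G^c_+)$, and using that $H^\bullet$ already silently fibrantly replaces its target, I obtain a level-by-level equivalence between $H^\bullet(BG; \Sigma^\infty_{BG}(G^c \times_G EG)_+)$ and $F_G(EG_+, S[G^c])$, which I take as $\Psi$.

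The main obstacle will be verifying that $\Psi$ is multiplicative. The Chas-Sullivan product on the section side factors as (i) external smash to sections over $BG \times BG$, (ii) pullback $B(\Delta_G)^\bullet$ along $B\Delta_G$, and (iii) the fiberwise monoid multiplication, which under $LBG \simeq G^c \times_G EG$ is induced by $\mu : \Delta_G^*(G^c \times G^c) \to G^c$. Under the descent isomorphism these become, respectively, (i) external smash of equivariant mapping spectra into a $G \times G$-equivariant one, (ii) restriction along $\Delta_G$ together with $E(\Delta_G)^*$, and (iii) post-composition with $S[\mu]_*$. This is precisely Westerland's description of the ring structure on $F_G(EG_+, S[G^c])$, so the two products agree and $\Psi$ is an equivalence of ring spectra. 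The subtle point here is checking that the fiberwise identification $LBG \simeq G^c \times_G EG$ is compatible with the pinch-map multiplication on one side and fiberwise concatenation on the other; once that is in hand, the rest is diagram chasing.
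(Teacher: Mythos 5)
Your proposal follows essentially the same route as the paper's proof: fiberwise Atiyah duality to rewrite $LM^{-TM}$ as a section spectrum, transport across $M \simeq BG$ using $LBG \simeq G^c \times_G EG$ as a fiberwise monoid, the descent isomorphism $\Map_{BG}(BG, Y \times_G EG) \isom \Map_G(EG, Y)$ applied levelwise, and a check that the three stages of the loop product (smash, umkehr along the diagonal, fiberwise multiplication) become Westerland's description of the ring structure on $F_G(EG_+, S[G^c])$. The argument and its key inputs (Gruher for the fiberwise monoid identification, Westerland for the target ring structure) match the paper's.
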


We now relate $F_G(EG_+, S[G^c])$ and $THH_S(S[G])$ as ring spectra.

\begin{prop}
There is an equivalence $\Lambda^\bullet(G): F_G(EG_+, S[G^c]) \to THH_S(S[G])$ of ring spectra.
\begin{proof}
We first show that $F_G(EG_+, S[G^c]) \simeq THH_S(S[G])$ as spectra. As above, take $B(G, G, *)$ as a model of $EG$ with the usual left $G$-action. Let 
\[
  Z^n = \Map_G(B_n(G, G, *), S[G^c])
\]
be the corresponding cosimplicial spectrum; then $F_G(EG_+, S[G^c]) = \Tot Z^\bullet$. Similarly, the endomorphism operad $F_S(S[G]^{\wedge\bullet}, S[G])$ of $S[G]$ is an operad with multiplication on account of the unit and multiplication maps of $S[G]$, and so by results of McClure and Smith~\cite{mcclure-smith:2002}, it admits a canonical cosimplicial structure. Furthermore, its totalization is $THH_S(S[G])$. Let 
\[
  Y^n = \Map(G^n, S[G]),
\]
which is equivalent to $F_S(S[G]^{\wedge n}, S[G])$. Then $\Tot Y^\bullet \simeq THH_S(S[G])$. Consequently, we need only exhibit an isomorphism $\psi^\bullet: Z^\bullet \to Y^\bullet$ of cosimplicial spectra. 

Since $G^c = \ad^*G$, the equivariance adjunction between $G$-spaces and $G \times G^{\op}$-spaces followed by pullback along the simplicial homeomorphism $\gamma^L_\bullet$ of Proposition~\ref{prop:top-bar-group-homeo} gives that
\[
  \Map_G(B_\bullet(G, G, *), S[G^c])
  \isom \Map_{G \times G^{\op}}(B_\bullet(G, G, G), S[G])
\]
as cosimplicial spectra. Finally, by the freeness of each $B_q(G, G, G)$ as a $(G \times G^{\op})$-space, 
\[
  \Map_{G \times G^{\op}}(B_\bullet(G, G, G), S[G]) \isom \Map(G^{\bullet}, S[G]),
\]
also as cosimplicial spectra. Explicitly, for $a \in Z^p$, we have that
\[
  \psi^p(a)([g_1 \mid \dotsb \mid g_p]) =  a([g_1 \mid \dotsb \mid g_p]) (g_1 \dotsm g_p).
\] 

We now show this equivalence is one of ring spectra. Since the cosimplicial structure of $Y^\bullet$ comes from an operad with multiplication, it has a canonical cup-pairing $Y^p \wedge Y^q \to Y^{p + q}$ coming from the operad composition maps and the multiplication. Therefore, $\Tot Y^\bullet \simeq THH_S(S[G])$ is an algebra for an operad $\mathcal{C}$ weakly equivalent to the little $2$-cubes operad. Hence, $THH_S(S[G])$ is an $E_2$-ring spectrum, and \emph{a fortiori} an $A_\infty$-ring spectrum.

We can also describe the product on $F_G(EG_+, S[G^c])$ cosimplicially. The diagonal map $\Delta_G$ applied levelwise gives a canonical $G$-equivariant diagonal map $\Delta$ on the simplicial set $B_\bullet(G, G, *)$. On realizations, this maps gives the $G$-equivariant map $E(\Delta_G): EG \to EG \times EG$. This diagonal map induces a sequence of cosimplicial maps
\begin{multline*}
  \Map_G(B_q(G, G, *), S[G^c]) \wedge \Map_G(B_q(G, G, *), S[G^c]) \\
  \to \Map_{G \times G}(B_q(G, G, *) \times B_q(G, G, *), S[G^c \times G^c])
  \to \Map_G(B_q(G, G, *), S[G^c])
\end{multline*}
when composed with pullback along $\Delta_G$ and with $S[\mu]$. These assemble to a cosimplicial multiplication map $\Tot Z^\bullet \wedge \Tot Z^\bullet \to \Tot Z^\bullet$, which produces the strictly associative product map on $F_G(EG_+, S[G^c])$.

This cosimplicial map induces a canonical cup-pairing $\cup: Z^p \wedge Z^q \to Z^{p + q}$, and as a result there is a map $\Tot Z^\bullet \wedge \Tot Z^\bullet \to \Tot Z^\bullet$ for each $u$ with $0 < u < 1$. Each such map is also  homotopic to the strict multiplication on $\Tot Z^\bullet$. Furthermore, these maps assemble into an action of the little $1$-cubes operad on $\Tot Z^\bullet$, making it an $A_\infty$-ring spectrum.

Consequently, we need to show that the isomorphism $\psi^\bullet: Z^\bullet \to Y^\bullet$ of cosimplicial spectra induces an isomorphism of these two cup-pairings. We do that explicitly using the definition of $\psi^\bullet$ and these cup-pairings. If $a \in Z^p$ and $b \in Z^q$, then
\begin{align*}
  (a \cup b)([g_1 \mid \dotsb \mid g_{p + q}]
  &= a([g_1 \mid \dotsb \mid g_p]) b(g_1 \dotsm g_p [g_{p + 1} \mid \dotsb \mid g_{p + q}]) \\
  &= a([g_1 \mid \dotsb \mid g_p]) g_1 \dotsm g_p b([g_{p + 1} \mid \dotsb \mid g_{p + q}]) (g_1 \dotsm g_p)^{-1}.
\end{align*}
Hence,
\begin{align*}
		(\psi^p(a) \cup \psi^q(b))([g_1 \mid \dotsb& \mid g_{p + q}]) \\
	&= \psi(a)([g_1 \mid \dotsb \mid g_p]) \psi(b)([g_{p + 1} \mid \dotsb \mid g_{p + q}]) \\
	&=  a([g_1 \mid \dotsb \mid g_p]) (g_1 \dotsm g_p) b([g_{p + 1} \mid \dotsb \mid g_{p + q}]) (g_{p + 1} \dotsm g_{p + q}) \\
	&=  (a \cup b)([g_1 \mid \dotsm \mid g_{p + q}]) (g_1 \dotsm g_{p + q})\\
	&= \psi^{p + q}(a \cup b)([g_1 \mid \dotsm \mid g_{p + q}]).
\end{align*}
Therefore, $\psi^p(a) \cup \psi^q(b) = \psi^{p + q}(a \cup b)$, so $\psi$ induces an isomorphism of cup-pairings, as desired. We conclude that the $A_\infty$-ring structures on $F_G(EG_+, S[G^c])$ and $THH_S(S[G])$ are equivalent.
\end{proof}
\end{prop}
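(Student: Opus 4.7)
The plan is to reduce the comparison of $F_G(EG_+, S[G^c])$ and $THH_S(S[G])$ to a comparison of cosimplicial spectra whose totalizations recover these objects, and then to verify that a natural levelwise isomorphism between the cosimplicial spectra preserves the cup-pairings that produce each ring structure. First I would take $B(G,G,\ast)$ as the model for $EG$ and set $Z^n = \Map_G(B_n(G,G,\ast), S[G^c])$, so that $\Tot Z^\bullet \simeq F_G(EG_+, S[G^c])$. For the Hochschild side, I would invoke the work of McClure and Smith on operads with multiplication, applied to the endomorphism operad of $S[G]$, to present $THH_S(S[G])$ as the totalization of $Y^\bullet$ with $Y^n = \Map(G^n, S[G]) \simeq F_S(S[G]^{\wedge n}, S[G])$, equipped with the canonical cup-pairing of an operad with multiplication.

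The next step is to construct an isomorphism $\psi^\bullet: Z^\bullet \to Y^\bullet$ of cosimplicial spectra. The key ingredients are the $G$-vs-$(G\times G^{\op})$ equivariance adjunction relating $S[G^c] = \ad^\ast S[G]$ to the bimodule $S[G]$, together with the simplicial homeomorphism $\gamma^L_\bullet: B_\bullet(G, G, G) \to B_\bullet(G \times G^{\op}, G, \ast)$ of Proposition~\ref{prop:top-bar-group-homeo}. Pulling back along $\gamma^L_\bullet$ and using that each $B_n(G,G,G)$ is free as a $(G\times G^{\op})$-space should identify $\Map_G(B_n(G,G,\ast), S[G^c])$ with $\Map(G^n, S[G])$. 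Explicitly I expect the formula $\psi^n(a)([g_1 \mid \dotsb \mid g_n]) = a([g_1 \mid \dotsb \mid g_n]) \cdot (g_1 \dotsm g_n)$, the conjugation twist analogous to the $\chi_n$ appearing for the homology side.

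To finish, I would match the ring structures cosimplicially. The product on $\Tot Z^\bullet$ arises from the $G$-equivariant diagonal $E(\Delta_G): EG \to EG \times EG$ (which on $B_\bullet(G, G, \ast)$ is the levelwise diagonal) combined with the multiplication $S[\mu]: S[G^c \times G^c] \to S[G^c]$; these data assemble into a cup-pairing $Z^p \wedge Z^q \to Z^{p+q}$ whose totalization gives the ring structure on $F_G(EG_+, S[G^c])$ described by Westerland. The product on $\Tot Y^\bullet$ is the canonical cup-pairing from the operad-with-multiplication structure. I would then check the identity $\psi^{p+q}(a \cup b) = \psi^p(a) \cup \psi^q(b)$ directly from the formulas, which reduces to the bookkeeping that the twist $(g_1 \dotsm g_{p+q})$ on the left-hand side factors through the composition of the separate twists $(g_1 \dotsm g_p)$ and $(g_{p+1} \dotsm g_{p+q})$ along with the conjugation action of $g_1 \dotsm g_p$ on the second factor.

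The main obstacle is the ring-structure matching rather than the underlying equivalence. The two products arise from formally different data --- a diagonal plus a spectrum-level multiplication on one side, an operad-with-multiplication cosimplicial structure on the other --- so that a priori one only expects them to be coherently $A_\infty$-homotopic, not strictly equal. The saving grace I would rely on is that both products are defined by the same combinatorial formula at each cosimplicial level once the conjugation twist is inserted, so the cup-pairings agree on the nose and the induced $A_\infty$-ring spectrum structures on the totalizations coincide.
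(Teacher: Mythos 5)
Your proposal matches the paper's proof step for step: same cosimplicial models $Z^\bullet$ and $Y^\bullet$, same identification via the equivariance adjunction and $\gamma^L_\bullet$, same explicit twist formula $\psi^n(a)([g_1\mid\dotsb\mid g_n]) = a([g_1\mid\dotsb\mid g_n])(g_1\dotsm g_n)$, and the same strategy of checking $\psi^{p+q}(a\cup b) = \psi^p(a)\cup\psi^q(b)$ directly at the level of cup-pairings. The paper carries out the computation you outline and reaches the same conclusion that the pairings agree on the nose rather than merely up to higher homotopy.
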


Naturally, we want to connect these equivalences of spectra to the quasi-isomorphisms of $k$-chain complexes determined above. Smashing these spectra with $Hk$, the Eilenberg-Mac Lane spectrum of $k$, and using the smallness of $EG$ as a $G$-space when $M$ is a Poincar\'e duality space,
\begin{align*}
  F_G(EG_+, S[G^c]) \wedge Hk &\simeq F_G(EG_+, S[G^c] \wedge Hk)
  \simeq F_{G_+ \wedge Hk}(EG_+ \wedge Hk, S[G^c] \wedge Hk)
\end{align*}
and $THH_S(S[G]) \simeq THH_{Hk}(S[G] \wedge Hk)$. Thus,
\[
  LM^{-TM} \wedge Hk \simeq F_{S[G] \wedge Hk}(EG_+ \wedge Hk, S[G^c] \wedge Hk) 
\simeq THH_{Hk}(S[G] \wedge Hk).
\] 
Furthermore, since $M$ is $k$-oriented, $LM^{-TM} \wedge Hk \simeq \Sigma^{-d} S[LM]$ by the Thom isomorphism.

We relate these spectrum-level constructions back to the chain-complex picture above. By results of Shipley~\cite{shipley:2007}, there is a zigzag of Quillen equivalences between the model categories of $Hk$-algebras and DGAs over $k$; the derived functors between the homotopy categories are denoted $\Theta: Hk\text{-alg} \to \text{DGA}/k$ and $\mathbb{H}: \text{DGA}/k \to Hk\text{-alg}$. Furthermore, this correspondence induces Quillen equivalences between $\modulecategory{A}$ and $\modulecategory{\mathbb{H}A}$ for $A$ a DGA over $k$, and between $\modulecategory{B}$ and $\modulecategory{\Theta B}$ for $B$ an $Hk$-algebra.

We also have that $\Theta(S[G] \wedge Hk)$ is weakly equivalent to $C_*(G; k)$, and so their categories of modules are also Quillen equivalent, since $\Ch(k)$ exhibits Quillen invariance for modules~\cite[3.11]{schwede-shipley:2003a}. Consequently, the categories of modules over $S[G] \wedge Hk$ and over $C_*(G; k)$ are Quillen equivalent. Since $EG_+ \wedge Hk$ is equivalent to $C_*(EG; k) \simeq k$, the equivalence above gives the quasi-isomorphisms
\[
  C_{* + d}(LM) \simeq R\Hom_{C_*\Omega M}(k, \Ad(\Omega M)) \simeq CH^*(C_*\Omega M)
\]
we developed above. In paticular, we recover the derived Poincar\'e duality map as the composite of the Atiyah duality map and the Thom isomorphism. Applying $H_*$, we recover the isomorphisms
\[
  H_{* + d}(LM) \simeq \Ext^*_{C_*\Omega M}(k, \Ad(\Omega M)) \simeq HH^*(C_*\Omega M).
\]
We summarize these results in the following theorem:

\begin{thm}
Any model for $R\Hom_{C_*\Omega M}(k, \Ad(\Omega M))$ is an algebra up to homotopy (i.e., a monoid in $\Ho \Ch(k)$) coming from the ring spectrum structure of $F_{\Omega M}(E\Omega M_+, S[\Omega M^c])$. Furthermore, this algebra is equivalent to the $A_\infty$-algebra $CH^*(C_*\Omega M)$, and in homology induces the loop product on $H_{* + d}(LM)$.
	
Therefore, the isomorphism $BFG \circ D: HH^*(C_*\Omega M) \to H_{* + d}(LM)$ is one of graded algebras, taking the Chas-Sullivan loop product to the Hochschild cup product. 
\end{thm}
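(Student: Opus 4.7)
The plan is to assemble the result from the zigzag of ring spectrum equivalences established in the two preceding propositions and then transfer it to the chain-complex setting via Shipley's Quillen equivalence. First, I would record the composite equivalence of ring spectra
\[
  LM^{-TM} \xrightarrow{\Psi} F_G(EG_+, S[G^c]) \xrightarrow{\Lambda^\bullet(G)} THH_S(S[G]),
\]
where $G$ is a topological group model for $\Omega M$. Smashing with $Hk$ and using that $EG$ is a small (homotopy finite) $G$-space when $M$ is a Poincar\'e duality space, I can commute the smash product past the $F_G$ to obtain an equivalence of $Hk$-ring spectra between $F_{S[G]\wedge Hk}(EG_+\wedge Hk, S[G^c]\wedge Hk)$ and $THH_{Hk}(S[G]\wedge Hk)$, and simultaneously an equivalence with $LM^{-TM}\wedge Hk$.

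Next, I would invoke Shipley's zigzag of Quillen equivalences between $Hk$-algebras and DGAs over $k$ and the associated Quillen equivalence on module categories, under which $S[G]\wedge Hk$ corresponds (via $\Theta$) to a DGA weakly equivalent to $C_*(G;k) = C_*\Omega M$. Applying $\Theta$ to the middle spectrum identifies it with $R\Hom_{C_*\Omega M}(k, \Ad(\Omega M))$, since $EG_+\wedge Hk \simeq C_*(EG;k)\simeq k$ as $C_*\Omega M$-modules and the target corresponds to $\Ad(\Omega M)$ by the equivalences of Section~\ref{ch:hh}. Applying $\Theta$ to $THH_{Hk}(S[G]\wedge Hk)$ yields an $A_\infty$-algebra equivalent to $CH^*(C_*\Omega M)$, since topological Hochschild cohomology corresponds to Hochschild cochains under Shipley's equivalence. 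This produces the desired monoid structure in $\Ho\Ch(k)$ on any model of $R\Hom_{C_*\Omega M}(k,\Ad(\Omega M))$ and identifies it with the $A_\infty$-algebra $CH^*(C_*\Omega M)$.

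To finish, I would use the Thom isomorphism $LM^{-TM}\wedge Hk \simeq \Sigma^{-d} S[LM]$, which exists because $M$ is $k$-oriented, so that on homotopy groups the composite equivalence realizes the additive isomorphism $H_{*+d}(LM) \isom HH^*(C_*\Omega M)$ of Corollary~\ref{corol:loop-homology-hh-isom}. Because $\Psi$ was constructed as an equivalence of ring spectra and $\Lambda^\bullet(G)$ preserves the cup-pairing on the cosimplicial models (as verified in the explicit computation $\psi^p(a)\cup\psi^q(b) = \psi^{p+q}(a\cup b)$), the induced map on $H_*$ intertwines the Chas--Sullivan loop product on $\mathbb{H}_*(LM)$ (identified via Proposition~\ref{prop:lmtm-spectrum-product}) with the Hochschild cup product on $HH^*(C_*\Omega M)$. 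Tracing through the identification of $D$ with the composite of derived Poincar\'e duality and the Atiyah duality equivalence, one obtains that $BFG\circ D$ is the promised isomorphism of graded algebras.

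The main obstacle will be the bookkeeping: one must ensure that at each stage the multiplicative structures are actually preserved, not merely that the underlying spectra or chain complexes are equivalent. In particular, the passage through Shipley's equivalence is delicate for $A_\infty$-ring structures, and one must verify that the cosimplicial cup-pairing on $Z^\bullet$ (which a priori only gives an $A_\infty$-ring structure after totalization) lines up with the honest DGA structure on $CH^*(C_*\Omega M)$. The comparison between the fiberwise-spectrum description of the loop product (via $\Delta^!$ and the pinch map $\gamma$) and the purely algebraic cup product on Hochschild cochains rests on the compatibility already encoded in Proposition~\ref{prop:a-inf-mult-ad}, whose role is precisely to supply the $A_\infty$-multiplicative coherence needed to match the two pictures.
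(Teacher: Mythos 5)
Your proposal follows essentially the same route as the paper: the composite ring spectrum equivalence $LM^{-TM}\to F_G(EG_+,S[G^c])\to THH_S(S[G])$, smashing with $Hk$ using smallness of $EG$, passing to DGAs via Shipley's zigzag $\Theta$/$\mathbb{H}$, and finishing with the Thom isomorphism $LM^{-TM}\wedge Hk\simeq\Sigma^{-d}S[LM]$. The concern you flag about transporting multiplicative structure through Shipley's equivalence is the same caveat the paper itself leaves partially open (its concluding remark that the $A_\infty$-structure from Proposition~\ref{prop:a-inf-mult-ad} ``should be equivalent'' to that of $CH^*(A,A)$), so you are not missing a step the paper fills in.
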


In particular, the model $\Hom_A(B(k, A, A), \Ad(G))$ for $A = C_*G$ has an $A_\infty$-algebra structure arising from the $A$-coalgebra structure of $B(k, A, A)$~\cite{felix-halperin-thomas:1995} and from the morphism $\tilde{\mu}: \Delta^*(\Ad \otimes \Ad) \to \Ad$ of $A_\infty$-modules over $A$ of Proposition~\ref{prop:a-inf-mult-ad}. This $A_\infty$-algebra structure should be equivalent to that of $CH^*(A, A)$ under the equivalences of Chapter~\ref{ch:hh}.

\subsection{Gerstenhaber and BV Structures}\label{sec:bv-structs}

\subsubsection{Relating the Hochschild and \texorpdfstring{$\Ext$/$\Tor$}{Ext/Tor} cap products}

By analogy with the cup-pairing of McClure and Smith, we introduce the notion of a cap-pairing between simplicial and cosimplicial spaces (or spectra), modeled on the cap product of Hochschild cochains on chains.

\begin{defn}
Let $X^\bullet$ be a cosimplicial space, and let $Y_\bullet$ and $Z_\bullet$ be simplicial spaces. A \term{cap-pairing} $c: (X^\bullet, Y_\bullet) \to Z_\bullet$ is a family of maps
\[
  c_{p,q}: X^p \times Y_{p + q} \to Z_q
\]
satisfying the following relations:
\begin{align*}
	c_{p,q}(d^i f, x) &= c_{p - 1, q}(f, d_i x), && 0 \leq i \leq p, \\
	c_{p,q}(d^p f, x) &= d_0 c_{p - 1, q + 1}(f,x), \\
	c_{p - 1, q}(f, d_{p + i}x) &= d_{i + 1} c_{p - 1, q + 1}(f, x), && 0 \leq i < q, \\
	c_{p,q}(s^i f, x) &= c_{p + 1, q}(f, s_i x), && 0 \leq i \leq p, \\
	c_{p,q}(f, s_{p + i} x) &= s_i c_{p + 1, q - 1}(f,x), && 0 \leq i < q.
\end{align*}
Note all of these relations hold in the space $Z_q$.

A \term{morphism} of cap-pairings from $c: (X^\bullet, Y_\bullet) \to Z_\bullet$ to $c': ({X'}^\bullet, Y'_\bullet) \to Z'_\bullet$ is a triple consisting of a cosimplicial map $\mu_1: X \to X'$ and simplicial maps $\mu_2: Y \to Y'$ and $\mu_3: Z \to Z'$ such that
\[
  \mu_3 \circ c_{p,q} = c'_{p, q} \circ (\mu_1 \times \mu_2) \quad \text{for all $p, q$}. 
\]
Analogous constructions pertain to simplicial and cosimplicial spectra.
\end{defn}

Just as a cup-pairing $\phi: (X^\bullet, Y^\bullet) \to Z^\bullet$ induces a family of maps $\Tot X^\bullet \times \Tot Y^\bullet \to \Tot Z^\bullet$, a cap-pairing induces a map $\Tot X^\bullet \times |Y_\bullet| \to |Z_\bullet|$:

\begin{prop}
Let $c: (X^\bullet, Y_\bullet) \to Z_\bullet$ be a cap-pairing. Then for each $u$ with $0 < u < 1$, $c$ induces a map
\[
  \bar{c}_u: \Tot X^\bullet \times |Y_\bullet| \to |Z_\bullet|.
\]
A morphism $(\mu_1, \mu_2, \mu_3): c \to c'$ of cap-pairings induces a commuting diagram
\[
  \xymatrix{
	\Tot X^\bullet \times |Y_\bullet| \ar[d]_{\Tot \mu_1 \times |\mu_2|} \ar[r]^-{\bar{c}_u} & |Z_\bullet| \ar[d]^{|\mu_3|} \\
	\Tot X'^\bullet \times |Y'_\bullet| \ar[r]^-{\bar{c}'_u} & |Z'_\bullet|
}
\]
\begin{proof}
This follows from the same prismatic subdivision techniques used~\cite{mcclure-smith:2002} to produce the maps $\bar{\phi}_u: \Tot X^\bullet \times \Tot Y^\bullet \to \Tot Z^\bullet$ from a cup-pairing $\phi: (X^\bullet, Y^\bullet) \to Z^\bullet$. For $n \geq 0$, define
\[
  D^n = \left(\coprod_{p = 0}^n \Delta^p \times \Delta^{n - p} \right)/\sim,
\]
where $\sim$ denotes the identifications $(d^{p + 1}s, t) \sim (s, d^0 t)$ for $s \in \Delta^p$ and $t \in \Delta^{n - p - 1}$. For each $u$, let $\sigma^n(u):  D^n \to \Delta^n$ be defined on $(s, t) \in \Delta^p \times \Delta^{n - p}$ by
\[
  \sigma^n(u)(s,t) = (us_0, \dotsc, us_{p - 1}, us_p + (1 - u) t_0, (1 - u) t_1, \dotsc, (1 - u) t_{n - p}).
\]
Then for $0 < u < 1$, $\sigma^n(u)$ is a homeomorphism. We use $\sigma^n(u)$ to define the map $\bar{c}_u$. Take $f \in \Tot X^\bullet$ and $(s, y) \in |Y_\bullet|$, and recall that $f$ is a sequence $(f_0, f_1, \dotsc)$ of functions $f_n: \Delta^n \to X^n$ commuting with the cosimplicial structure maps of $\Delta^\bullet$ and $X^\bullet$. Suppose $s \in \Delta^{p + q}$ and $y \in Y_{p + q}$, and that $\sigma^n(u)^{-1}(s) = (s', s'') \in \Delta^p \times \Delta^q \subset D^{p + q}$. Then
\[
  \bar{c}_u(f, (s, y)) = (s'', c_{p, q}(f(s'), y)) \in \Delta^q \times Z_q.
\]
The properties in the definition of the cap-pairing ensure that this map is well-defined: the second face-coface relation shows that this map is well-defined if a different representative is taken for $\sigma^n(u)^{-1}(s)$, and the other relations show that the map is well-defined for different representatives of $(s, y) \in |Y_\bullet|$. 

The naturality of these constructions in the simplicial and cosimplicial objects then shows that a morphism of cap-pairings induces such a commuting diagram.
\end{proof}
\end{prop}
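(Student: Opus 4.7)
The plan is to mimic the prismatic subdivision construction that McClure and Smith use to extract $\bar{\phi}_u : \Tot X^\bullet \times \Tot Y^\bullet \to \Tot Z^\bullet$ from a cup-pairing, but with the second factor replaced by a geometric realization rather than a totalization. I would first set up the prismatic polytopes $D^n = \bigl(\coprod_{p=0}^n \Delta^p \times \Delta^{n-p}\bigr)/\sim$ with the identification $(d^{p+1}s, t) \sim (s, d^0 t)$, and then introduce the family of maps $\sigma^n(u) : D^n \to \Delta^n$ given on $\Delta^p \times \Delta^{n-p}$ by the formula displayed in the statement. A quick direct check shows that $\sigma^n(u)$ descends to the quotient (the face relation is absorbed by the $d^{p+1}$/$d^0$ matchup) and that for $0 < u < 1$ it is a homeomorphism, with inverse that sends any $s \in \Delta^n$ to a unique $(s', s'') \in \Delta^p \times \Delta^q$ once we choose the block decomposition $n = p + q$.

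Next I would define $\bar c_u$. Given $f = (f_n) \in \Tot X^\bullet$, that is, a family of maps $f_n : \Delta^n \to X^n$ compatible with cofaces and codegeneracies, and given a representative $(s, y) \in \Delta^{p+q} \times Y_{p+q}$ for a point of $|Y_\bullet|$, I would split $s$ via $\sigma^{p+q}(u)^{-1}$ into $(s', s'') \in \Delta^p \times \Delta^q$, form $f_p(s') \in X^p$, and then set
\[
  \bar c_u(f, [s, y]) = \bigl[s'', c_{p,q}(f_p(s'), y)\bigr] \in \Delta^q \times Z_q / \sim.
\]
The core of the proof is then the verification that this assignment descends to $\Tot X^\bullet \times |Y_\bullet| \to |Z_\bullet|$, and this is precisely where the five relations in the definition of a cap-pairing are used. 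The ambiguity in $\sigma^n(u)^{-1}$ coming from the boundary identifications in $D^n$ is resolved by the second coface relation $c_{p,q}(d^p f, x) = d_0 c_{p-1, q+1}(f, x)$, which accounts for the block-boundary where one increases $p$ by one and decreases $q$ by one. The ambiguity in $[s, y] \in |Y_\bullet|$ under the face-degeneracy relations is matched against the remaining four relations: face relations $d_i$ for $i \leq p$ transfer to cofaces on $f$ via the first relation and use the $\Tot$ compatibility of $f$; face relations $d_{p+i+1}$ for $i < q$ pass through $c$ into face maps on $Z$ via the third relation; and the two degeneracy relations are handled symmetrically by the last two identities. That each of these relations is exactly what is needed to handle one of the block-position cases is the main bookkeeping, but no calculation is deep.

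The continuity of $\bar c_u$ is immediate from the continuity of $f_p$, of $c_{p,q}$, and of the inverse $\sigma^{p+q}(u)^{-1}$ on the interior, together with compatibility across the boundary pieces just proved. For the final clause, given a morphism $(\mu_1, \mu_2, \mu_3) : c \to c'$, I would unwind the definitions: $|\mu_3|$ sends $[s'', z] \mapsto [s'', (\mu_3)_q z]$, while $\bar c'_u$ applied to $(\Tot \mu_1 \times |\mu_2|)(f, [s, y])$ produces $[s'', c'_{p,q}((\mu_1)_p f_p(s'), (\mu_2)_{p+q} y)]$, and the defining identity $\mu_3 \circ c_{p,q} = c'_{p,q} \circ (\mu_1 \times \mu_2)$ immediately identifies the two. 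The most tedious step will be the case analysis of well-definedness, since one must keep track of whether a given face or degeneracy index falls in the $\Delta^p$ block, the $\Delta^q$ block, or at the boundary between them, and match each case to exactly the right axiom of the cap-pairing; the geometric picture of prismatic subdivision makes this bookkeeping essentially forced, which is why I would not grind through it here.
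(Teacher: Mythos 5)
Your proposal matches the paper's proof essentially line for line: the same prismatic polytopes $D^n$, the same subdivision maps $\sigma^n(u)$, the same formula for $\bar{c}_u$, and the same division of labor among the five cap-pairing axioms (the second face--coface relation for the block boundary in $D^n$, the remaining four for the simplicial identifications in $|Y_\bullet|$), followed by naturality for the morphism clause. The only additions are small elaborations (an explicit continuity remark and a more detailed unwinding of the morphism case) that the paper leaves implicit.
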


We now apply this cap-pairing framework to the simplicial and cosimplicial spectra above.

\begin{prop}
$S[G^c] \wedge_G EG_+$ is a right module for the ring spectrum $F_G(EG_+, S[G^c])$. Under the equivalences above, this module structure is equivalent to the $THH_S(S[G])$-module structure of $THH^S(S[G])$.
\begin{proof}
We first explain the module structure of $S[G^c] \wedge_G EG_+$ in terms of a cap-pairing between cosimplicial and simplicial spectra. Recall that $F_G(EG_+,  S[G^c]) = \Tot Z^\bullet$, where $Z^n = \Map_G(B_n(G, G, *), S[G^c])$, and that $S[G^c] \wedge_G EG_+ \simeq |W_\bullet|$, where $W_n = S[G^c] \wedge_G B_n(G, G, *)_+$. The cap-pairing is then a collection of compatible maps
\[
  c_{p,q}: Z^p \wedge W_{p + q} \to W_q
\]
given on elements $a \in Z^p$ and $s \wedge g[g_1 \mid \dotsc \mid g_{p + q}] \in W_{p + q}$ by
\[
  c_{p,q}(a, (s \wedge g[g_1 \mid \dotsc \mid g_{p + q}])) = s a(g[g_1 \mid \dotsc \mid g_p]) \wedge g g_1 \dotsm g_p [g_{p + 1} \mid \dotsc \mid g_{p + q}].
\]
As with the cup-pairing on $Z^\bullet$, this map comes from the simplicial diagonal on $B_*(G, G, *)$ composed with the Alexander-Whitney approximation, and then applying the map to the left factor of the diagonal. 

We show that this cap-pairing is compatible with the cup-pairing on $Z^\bullet$ giving rise to the ring structure on $F_G(EG_+, S[G^c])$, and in fact makes $S[G^c] \wedge_G EG_+$ a right module over $F_G(EG_+, S[G^c])$. Take $a \in Z^p, b \in Z^q$, and $c = s \wedge g[g_1 \mid \dotsc \mid g_{p + q + r}] \in W_{p + q + r}$. Then
\begin{align*}
  c_{q, r}(b, c_{p, q + r}(a, c))
  & = c_{q, r}(b, s a(g[g_1 \mid \dotsc \mid g_p]) \wedge g g_1 \dotsm g_p [g_{p + 1} \mid \dotsc \mid g_{p + q + r}]) \\
  &= s a(g[g_1 \mid \dotsc \mid g_p]) b(g g_1 \dotsm g_p [g_{p + 1} \mid \dotsc \mid g_{p + q}]) \\
  & \mbox{}\hspace{2in}\mbox{} \wedge g g_1 \dotsm g_{p + q} [g_{p + q + 1} \mid \dotsc \mid g_{p + q + r}]) \\
  &= s (a \cup b)(g[g_1 \mid \dotsc \mid g_{p + q}]) \wedge g g_1 \dotsm g_{p + q} [g_{p + q + 1} \mid \dotsc \mid g_{p + q + r}] \\
  &= c_{p + q, r}(a \cup b, c)
\end{align*}

Similarly, the right $THH_S(S[G])$-module structure of $THH^S(S[G])$ via the Hochschild cap product can be described in terms of these cap pairings. As above, we have that $\Tot Y^\bullet \simeq THH_S(S[G])$, where $Y^n = \Map(G^n, S[G])$, and that $|V_n| \simeq THH^S(S[G])$, where $V_n = S[G] \wedge_{G \times G^{\op}} B_n(G,G,G)_+$. Levelwise, $V_n = S[G] \wedge (G^n)_+$. Then there is a cap-pairing
\[
  h_{p, q}: Y^p \wedge V_{p + q} \to V_q
\]
given by
\[
  h_{p,q}(f, a \wedge [g_1 \mid \dotsc \mid g_{p + q}]) = a f([g_1 \mid \dotsb \mid g_p]) \wedge [g_{p + 1} \mid \dotsc \mid g_{p + q}].
\]
This cap-pairing thus comes from evaluating the $p$-cochain on the first $p$ factors of the $(p + q)$-chain. A simple calculation shows that this cap-pairing is compatible with the cup-pairing on $Y^\bullet$ and therefore induces the desired right $THH_S(S[G])$-module structure on $THH^S(S[G])$.

We now show that the isomorphisms of simplicial and cosimplicial spectra $\psi^\bullet: Z^\bullet \to Y^\bullet$ and $\chi_\bullet: W_\bullet \to V_\bullet$ are compatible with the cap-pairings $c$ and $h$. Hence, we check that $h_{p,q} \circ (\psi^p \wedge \chi_{p + q}) = \chi_q c_{p,q}$:
\begin{align*}
	h_{p,q}(\psi^p(f) \wedge \mbox{}&\chi_{p + q}(a \wedge [g_1 \mid \dotsb \mid g_{p + q}])) \\
	&= (g_1 \dotsm g_{p + q})^{-1} a \psi^p(f)([g_1 \mid \dotsb \mid g_p]) \wedge [g_{p + 1} \mid \dotsb \mid g_{p + q}] \\
	&= (g_1 \dotsm g_{p + q})^{-1} a f([g_1 \mid \dotsb \mid g_p])g_1 \dotsm g_p \wedge [g_{p + 1} \mid \dotsb \mid g_{p + q}] \\
	&= \chi_q((g_1 \dotsm g_p)^{-1} a f([g_1 \mid \dotsb \mid g_p]) g_1 \dotsm g_p \wedge [g_{p + 1} \mid \dotsb \mid g_{p + q}]) \\
	&= \chi_q(a f([g_1 \mid \dotsb \mid g_p]) \wedge g_1 \dotsm g_p[g_{p + 1} \mid \dotsb \mid g_{p + q}]) \\
	&= \chi_q(c_{p,q}(f \wedge (a \wedge [g_1 \mid \dotsb \mid g_{p + q}]))).
\end{align*}
Since this holds, the right action of $F_G(EG_+, S[G^c])$ on $S[G^c] \wedge_G EG_+$ is equivalent to that of $THH_S(S[G])$ on $THH^S(S[G])$.
\end{proof}
\end{prop}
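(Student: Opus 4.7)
The plan is to realize both module structures at the level of simplicial and cosimplicial spectra via cap-pairings in the sense just introduced, and then observe that the isomorphisms $\psi^\bullet$ and $\chi_\bullet$ already built intertwine them. First I would set up the cap-pairing $c_{p,q} : Z^p \wedge W_{p+q} \to W_q$ whose shape is dictated by the cup-pairing on $Z^\bullet$ that defines the ring structure of $F_G(EG_+, S[G^c])$. The natural formula is to apply the cochain $a \in Z^p = \Map_G(B_p(G,G,*), S[G^c])$ to the first $p$ bar-coordinates and then use the $G$-action on $S[G^c]$ (by conjugation) to reindex the trailing $q$ bar-coordinates by the accumulated product $g_1 \dotsm g_p$, namely
\[
  c_{p,q}\bigl(a,\, s \wedge g[g_1 \mid \dotsb \mid g_{p+q}]\bigr)
  = s\cdot a(g[g_1 \mid \dotsb \mid g_p]) \wedge g g_1 \dotsm g_p[g_{p+1} \mid \dotsb \mid g_{p+q}].
\]
Verifying the face/degeneracy axioms of a cap-pairing is then a bookkeeping exercise: the coface relations on the $Z^\bullet$ side and the face relations on the $W_\bullet$ side both come from the same simplicial diagonal on $B_\bullet(G,G,*)$ followed by Alexander--Whitney splitting, so each axiom reduces to an identity in the underlying Hopf-object structure of $G$.

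Next I would promote $c$ to a map $\Tot Z^\bullet \wedge |W_\bullet| \to |W_\bullet|$ via the geometric-realization/totalization lemma proved just above, and then check associativity against the cup-pairing on $Z^\bullet$. Given $a \in Z^p$, $b \in Z^q$, and a simplex in $W_{p+q+r}$, both $c_{q,r}(b, c_{p,q+r}(a,-))$ and $c_{p+q, r}(a \cup b,-)$ pre-compose the same $S[G^c]$-valued evaluations on the first $p+q$ bar-coordinates, with matching conjugation factors on the remaining $r$. Unitality is automatic from the formula, so this yields the right-module structure of $|W_\bullet| \simeq S[G^c] \wedge_G EG_+$ over $F_G(EG_+, S[G^c])$.

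For the Hochschild side, I would write down the completely parallel cap-pairing $h_{p,q} : Y^p \wedge V_{p+q} \to V_q$ with $V_n \simeq S[G] \wedge G^n_+$. The conjugation is absent there, so the formula is simply $h_{p,q}(f, a \wedge [g_1 \mid \dotsb \mid g_{p+q}]) = a\, f([g_1 \mid \dotsb \mid g_p]) \wedge [g_{p+1} \mid \dotsb \mid g_{p+q}]$. Compatibility with the McClure--Smith cup-pairing is standard and recovers the usual right-module action of $THH_S(S[G])$ on $THH^S(S[G])$ via Hochschild cap product.

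The last step, and the only place any real content enters, is to verify that the pair $(\psi^\bullet, \chi_\bullet)$ is a morphism of cap-pairings, i.e.\ $h_{p,q} \circ (\psi^p \wedge \chi_{p+q}) = \chi_q \circ c_{p,q}$. Expanding both sides using the explicit formula $\psi^p(a)([g_1|\cdots|g_p]) = a([g_1|\cdots|g_p])\,g_1 \dotsm g_p$ from the preceding proposition and the analogous twist defining $\chi_\bullet$, the $(g_1 \dotsm g_p)$ and $(g_{p+1} \dotsm g_{p+q})$ factors telescope to $(g_1 \dotsm g_{p+q})$, exactly matching the factor introduced by $\chi_q$. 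The main obstacle I anticipate is purely organizational: keeping the conjugation twists in $S[G^c]$ synchronized with the reindexing coming from the simplicial homeomorphism $\phi^L_\bullet$ of Proposition~\ref{prop:top-bar-group-homeo}, with no independent step beyond the Hopf-algebraic identity already encoded there. Once this intertwining is established, the induced map on totalizations/realizations is the desired equivalence of right-module structures.
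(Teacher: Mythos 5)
Your proposal follows essentially the same route as the paper: you set up the identical cap-pairings $c_{p,q}$ and $h_{p,q}$ with the same explicit formulas, verify compatibility with the respective cup-pairings, and conclude by checking that $(\psi^\bullet,\chi_\bullet)$ is a morphism of cap-pairings, with the twist factors telescoping exactly as in the paper's computation. The only minor difference is that you make explicit the bookkeeping verification of the cap-pairing face/degeneracy axioms and the passage from the pairing on (co)simplicial levels to a map $\Tot Z^\bullet \wedge |W_\bullet| \to |W_\bullet|$ via the preceding realization lemma, steps the paper leaves implicit.
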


Under the equivalences of Section~\ref{ssec:ring-spectrum-equivs}, these module structures should be equivalent to Klein's module structure of $S[LM]$ over the $A_\infty$-ring spectrum $LM^{-TM}$~\cite{klein:2006}.

Again applying $- \wedge Hk$ and passing to the derived category of chain complexes, we obtain that $\Ad(\Omega M) \otimes^L_{C_*\Omega M} k$ is a right $A_\infty$-module for $R\Hom_{C_*\Omega M}(k, \Ad(\Omega M))$, and that this module structure is equivalent to that of the Hochschild cochains acting on the Hochschild chains.%

We now relate these cap products to the evaluation map and to the isomorphism $D$. 

\begin{prop}
View $\eta: k \to \Ad(\Omega M)$ as a map of $C_*\Omega M$-modules, inducing a map $\eta_*: \Tor_*^{C_*\Omega M}(k, k) \to \Tor_*^{C_*\Omega M}(\Ad, k)$. Then for $z \in \Tor_*^{C_*\Omega M}(k, k)$, $f \in \Ext^*_{C_*\Omega M}(k, \Ad(\Omega M))$, 
\[
  \ev_z(f) = (-1)^{|f||z|} \eta_*(z) \cap f.
\]
\begin{proof}
By the form of the cap-pairing on the spectrum level, the cap product 
\[
  \Ext^*_{C_* \Omega M}(k, \Ad(\Omega M)) \otimes \Tor_*^{C_*\Omega M}(\Ad(\Omega M), k) \to \Tor_*^{C_*\Omega M}(\Ad(\Omega M), k)
\]
is given by the sequence of maps
\begin{multline*}
  \Ext^*_{C_*\Omega M}(k, \Ad) \otimes \Tor_*^{C_*\Omega M}(\Ad, k)
  \xrightarrow{\isom} \Ext^*_{C_*\Omega M}(k, \Ad) \otimes \Tor_*^{C_*\Omega M}(\Ad, \Delta^*(k \otimes k)) \\
  \xrightarrow{\ev} \Tor_*^{C_*\Omega M}(\Ad, \Delta^*(\Ad \otimes k))
  \xrightarrow{\isom} \Tor_*^{C_*\Omega M}(\Delta^*(\Ad \otimes \Ad), k)
  \xrightarrow{\tilde{\mu}_*} \Tor_*^{C_*\Omega M}(\Ad, k),
\end{multline*}
where $\tilde{\mu}$ is the morphism of $A_\infty$-modules over $C_*\Omega M$ given in Prop.~\ref{prop:a-inf-mult-ad}. By introducing an extra $k$ factor via $\Delta_k$ and then collapsing it via $\lambda$, $\ev: \Ext_{C_*\Omega M}^*(k, \Ad) \otimes \Tor_*^{C_*\Omega M}(k, k) \to \Tor_*^{C_*\Omega M}(\Ad, k)$ is similarly given by
\begin{multline*}
\Ext^*_{C_*\Omega M}(k, \Ad) \otimes \Tor_*^{C_*\Omega M}(k, k)
  \xrightarrow{\isom} \Ext^*_{C_*\Omega M}(k, \Ad) \otimes \Tor_*^{C_*\Omega M}(k, \Delta^*(k \otimes k)) \\
  \xrightarrow{\ev} \Tor_*^{C_*\Omega M}(k, \Delta^*(\Ad \otimes k))
  \xrightarrow{\isom} \Tor_*^{C_*\Omega M}(\Delta^*(k \otimes \Ad), k)
  \xrightarrow{\lambda_*} \Tor_*^{C_*\Omega M}(\Ad, k)
\end{multline*}
Then for a given $f \in \Ext^*_{C_*\Omega M}(k, \Ad(\Omega M))$, the sequence of squares
\[
  \xymatrix@C-1em{
	\Tor(k, k) \ar[r] \ar[d]_{\eta_*} & \Tor(k, \Delta^*(k \otimes k)) \ar[r]^{\ev(f)} \ar[d]_{\eta_*}  & \Tor(k, \Delta^*(\Ad \otimes k)) \ar[r] \ar[d]_{\eta_*} & \Tor(\Delta^*(k \otimes \Ad), k) \ar[d]_{\eta_*} \\
	\Tor(\Ad, k) \ar[r] & \Tor(\Ad, \Delta^*(k \otimes k)) \ar[r]^{\ev(f)} & \Tor(\Ad, \Delta^*(\Ad \otimes k)) \ar[r] & \Tor(\Delta^*(\Ad \otimes \Ad), k)
}
\]
commutes. Since $\mu$ is unital, $\mu (\eta \circ \Ad) = \lambda: \Delta^*(k \otimes \Ad) \to \Ad$ as maps of chain complexes. Hence, applying $\Tor^{C_*\Omega M}_*(-, k)$ to the composite
\[
  \Delta^*(k \otimes \Ad) \xrightarrow{\eta \otimes \Ad}
  \Delta^*(\Ad \otimes \Ad) \xrightarrow{\tilde{\mu}}
  \Ad
\]
gives $\Tor^{C_*\Omega M}_*(\lambda, k)$. Taking into account the swap between the $\Ext$ and $\Tor$ tensor factors for the cap product, this establishes the identity $\ev_z(f) = (-1)^{|f||z|} \eta_*(z) \cap f$. 
\end{proof}
\end{prop}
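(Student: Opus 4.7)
My plan is to unpack both sides of the claimed identity into explicit sequences of maps between $\Tor$ groups over $A = C_*\Omega M$, and then show the two sequences agree up to the stated Koszul sign via the unit axiom for $\tilde{\mu}$.

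First, I would write out the cap product $\cap: \Ext^*_A(k, \Ad) \otimes \Tor^A_*(\Ad, k) \to \Tor^A_*(\Ad, k)$ using the cosimplicial/simplicial cap-pairing established in the preceding proposition. Chasing this through the equivalences, it factors as: introduce a formal $k$-factor via the counit isomorphism $k \cong \Delta^*(k \otimes k)$, apply $\ev$ with $f$ to convert the first tensor slot from $k$ to $\Ad$, reassociate the tensor factors, and finally apply $\tilde{\mu}_*: \Tor^A_*(\Delta^*(\Ad \otimes \Ad), k) \to \Tor^A_*(\Ad, k)$, where $\tilde{\mu}$ is the $A_\infty$-module morphism of Proposition~\ref{prop:a-inf-mult-ad}. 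Then I would write out the evaluation $\ev_z: \Ext^*_A(k, \Ad) \otimes \Tor^A_*(k, k) \to \Tor^A_*(\Ad, k)$ in the same style: introduce $k \cong \Delta^*(k \otimes k)$, evaluate $f$ on the first slot to land in $\Tor^A_*(k, \Delta^*(\Ad \otimes k))$, reassociate, and collapse via the left-unit isomorphism $\lambda: \Delta^*(k \otimes \Ad) \to \Ad$.

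The central step is then to compare these two composites under $\eta_*: \Tor^A_*(k, -) \to \Tor^A_*(\Ad, -)$. By naturality of the evaluation of $f$ and of the associativity isomorphisms, the diagram obtained by applying $\eta_*$ levelwise commutes, reducing the claim to the equality of the two terminal maps, $\tilde{\mu} \circ (\eta \otimes \id_{\Ad})$ and $\lambda$, as morphisms $\Delta^*(k \otimes \Ad) \to \Ad$. Since $\tilde{\mu}_1 = \mu$ by construction in Proposition~\ref{prop:a-inf-mult-ad}, and since $\mu \circ (\eta \otimes \id) = \lambda$ by the unit axiom of the DGA $C_*\Omega M$, the relevant $A_\infty$-module morphisms agree on their first structure map; the higher components of $\tilde{\mu} \circ (\eta \otimes \id)$ vanish because $\eta$ is a strict map of $A$-modules from the trivial module $k$, which is detected by $EZ \circ AW = \id$ on $0$-chains. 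Hence this identification passes to $\Tor$ and gives the desired equality of chain-level composites.

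Finally, I would account for the sign. The two composites differ in the position at which $f$ is inserted relative to $z$: in $\ev_z(f) = (f \otimes \id)(z)$ the cochain $f$ acts on the left tensor slot of $z$, whereas in $\eta_*(z) \cap f$ the Hochschild cap product convention (inherited from the cap-pairing $c_{p,q}$) evaluates $f$ on the first $p$ slots after a swap of the $\Ext$ and $\Tor$ factors. Each such swap of graded factors produces a Koszul sign $(-1)^{|f||z|}$, and keeping careful track of these symmetry isomorphisms in $\Ch(k)$ yields exactly the sign stated. The main obstacle will be verifying the sign carefully (and confirming that the higher-order $A_\infty$ terms in $\tilde{\mu}$ really do vanish against $\eta$), but once the two sequences are aligned, the unit identity for $\mu$ does the rest.
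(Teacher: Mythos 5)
Your proposal follows the paper's proof essentially step for step: you unpack both $\cap$ and $\ev_z$ into parallel sequences of $\Tor$-maps (insert $k \cong \Delta^*(k\otimes k)$, evaluate $f$, reassociate, then apply $\tilde\mu_*$ resp.\ $\lambda_*$), commute the two sequences against $\eta_*$ by naturality, reduce to the unit identity $\mu\circ(\eta\otimes\id)=\lambda$, and account for the Koszul sign from the swap of the $\Ext$ and $\Tor$ tensor factors. You go slightly beyond the paper in flagging that one must also know the higher $A_\infty$ components of $\tilde\mu\circ(\eta\otimes\Ad)$ contribute nothing to the induced map on $\Tor$ — the paper asserts $\Tor(\lambda,k)$ directly from agreement of the first components — and this is indeed a point worth addressing; however, your stated justification (``detected by $EZ\circ AW=\id$ on $0$-chains'') is a heuristic rather than a proof, since the homotopies $H^n$ entering $\tilde\mu_n$ act on the algebra factors $C_*(G\times G)^{\otimes(n-1)}$, not on the module slot where $\eta(1)=e$ is inserted, so the actual vanishing (or null-homotopy) of those higher terms deserves a more careful argument.
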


\begin{prop}
The isomorphism $D: HH^*(C_*\Omega M) \to HH_{* + d}(C_*\Omega M)$ is given by
\[
  D(f) = (-1)^{|f| d} z_H \cap f,
\]
where $z_H \in HH_d(C_*\Omega M)$ is the image of $[M] \in \Tor_d^{C_*\Omega M}(k, k)$ under the maps
\[
  \Tor_*^{C_*\Omega M}(k, k) \xrightarrow{\Tor(\eta, k)} \Tor_*^{C_*\Omega M}(\Ad(\Omega M), k) \xrightarrow{\Lambda_*^{-1}} HH_*(C_*\Omega M).
\]
\begin{proof}
By construction, $D(f) = \Lambda_*^{-1}(\ev_{[M]}(\Lambda^* f))$. By the above proposition,
\begin{align*}
  \Lambda_*^{-1}(\ev_{[M]}(\Lambda^* f))
  &= (-1)^{|f| d}\Lambda_*^{-1}(\eta_*[M] \cap \Lambda^* f) \\
  &= (-1)^{|f| d} \Lambda_*^{-1}(\eta_*[M]) \cap f
  = (-1)^{|f| d} z_H \cap f,
\end{align*}
so $D(f) = (-1)^{|f| d} z_H \cap f$.
\end{proof}
\end{prop}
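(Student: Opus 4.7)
The plan is to chain together the definition of $D$ from Theorem~\ref{thm:hh-additive-isom} with the preceding proposition that identifies the evaluation map as a cap product. By construction, $D(f) = \Lambda_*^{-1}(\ev_{[M]}(\Lambda^*(f)))$, where $\Lambda^* = \Lambda^*(C_*\Omega M, C_*\Omega M)$ and $\Lambda_* = \Lambda_*(C_*\Omega M, C_*\Omega M)$ are the isomorphisms of Corollary~\ref{corol:cg-hh-isoms}, viewed at the level of homology. So the computation reduces to two substitutions.

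First I would apply the preceding proposition with $z = [M] \in \Tor_d^{C_*\Omega M}(k,k)$ and replace $\ev_{[M]}(\Lambda^* f)$ by $(-1)^{|f|\cdot d}\, \eta_*([M]) \cap \Lambda^*(f)$, using $|[M]| = d$. This places $D(f)$ inside $\Tor_*^{C_*\Omega M}(\Ad(\Omega M), k)$ as $(-1)^{|f|d} \Lambda_*^{-1}(\eta_*([M]) \cap \Lambda^*(f))$. Second, I would push $\Lambda_*^{-1}$ through the cap product: by the results of Section~\ref{ssec:ring-spectrum-equivs} (especially the cap-pairing compatibility established there, where the Hochschild cap-pairing on $(CH^*, CH_*)$ and the $\Ext$/$\Tor$ cap-pairing on $(\Ext_{C_*\Omega M}^*(k, \Ad), \Tor_*^{C_*\Omega M}(\Ad, k))$ are intertwined by $\Lambda^*$ and $\Lambda_*$), one has the module-map identity
\[
  \Lambda_*^{-1}(w \cap \Lambda^*(f)) = \Lambda_*^{-1}(w) \cap f
\]
for any $w \in \Tor_*^{C_*\Omega M}(\Ad, k)$ and $f \in HH^*(C_*\Omega M)$. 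Applied with $w = \eta_*([M])$, this yields $\Lambda_*^{-1}(\eta_*([M])) \cap f = z_H \cap f$ by the very definition of $z_H$.

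Putting the two substitutions together gives $D(f) = (-1)^{|f|d}\, z_H \cap f$, as claimed.

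The only nontrivial input is the compatibility of $\Lambda_*^{-1}$ with cap products, and this is already the content of the preceding material on cap-pairings: the isomorphisms $\psi^\bullet$ and $\chi_\bullet$ were shown to intertwine the cosimplicial cap-pairings $c$ and $h$ on the spectrum side, and smashing with $Hk$ and passing to the derived category of chain complexes transports this to the statement that $\Lambda^*$ and $\Lambda_*$ intertwine the two cap products. So in effect the proposition is a bookkeeping consequence of the two structural results immediately preceding it, and the only thing to watch carefully is the sign, which comes entirely from the Koszul sign in the previous proposition (no extra signs are introduced by $\Lambda_*^{-1}$ since it is a chain map of degree $0$).
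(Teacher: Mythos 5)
Your proposal follows the paper's proof exactly: unwind $D = \Lambda_*^{-1} \circ \ev_{[M]} \circ \Lambda^*$, substitute the identity $\ev_{[M]}(g) = (-1)^{|g|d}\eta_*[M]\cap g$ from the preceding proposition, and then move $\Lambda_*^{-1}$ past the cap product. You add a slightly more explicit justification of the last step — tracing it to the cap-pairing compatibility of $\psi^\bullet$ and $\chi_\bullet$ — which the paper leaves implicit, but the argument and its dependencies are the same.
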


\begin{prop}
$B(z_H) = 0$.
\begin{proof}
Observe that we have the following commutative diagram:
\[
  \xymatrix{
	H_*(M) \ar[d]^{c_*} \ar[r]^-{\isom} & HH_*(C_*\Omega M, k) \ar[r]^{\Lambda_*}_{\isom} & \Tor_*^{C_*\Omega M}(k, k) \ar[d]^{\Tor(\eta, k)} \\
	H_*(LM) \ar[r]^-{BFG}_-{\isom} & HH_*(C_*\Omega M, C_*\Omega M) \ar[r]^{\Lambda_*}_{\isom} & \Tor_*^{C_*\Omega M}(\Ad(\Omega M), k)
}
\]
where $c: M \to LM$ is the map sending $x \in M$ to the constant loop at $x$. Then $B(z_H) = BFG (\Delta(c_*[M]))$. The trivial action of $S^1$ on $M$ induces a degree-$1$ operator $\Delta$ on $H_*(M)$ that is identically $0$. Since $c$ is $S^1$-equivariant with respect to these actions, $\Delta \circ c_* = c_* \circ \Delta = 0$, so $B(z_H) = 0$. 
\end{proof}
\end{prop}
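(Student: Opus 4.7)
The plan is to transfer the computation from the algebraic side $HH_*(C_*\Omega M)$ to the topological side $H_*(LM)$ using the Burghelea--Fiedorowicz--Goodwillie isomorphism (Theorem~\ref{thm:bfg-iso}), under which $B$ corresponds to $\Delta$. The key geometric insight is that the class $z_H$ should correspond, under $BFG$, to the image of the fundamental class $[M]$ under the constant-loop inclusion $c:M\to LM$, $c(x)(t)=x$. Since $c$ is $S^1$-equivariant with $S^1$ acting trivially on $M$, the $\Delta$-operator annihilates $c_*[M]$ for trivial degree reasons, and this will give $B(z_H)=0$.

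First I would set up the comparison diagram
\[
  \xymatrix{
    H_*(M) \ar[d]^{c_*} \ar[r]^-{\isom} & HH_*(C_*\Omega M,k) \ar[r]^-{\Lambda_*}_-{\isom} & \Tor_*^{C_*\Omega M}(k,k) \ar[d]^{\Tor(\eta,k)} \\
    H_*(LM) \ar[r]^-{BFG}_-{\isom} & HH_*(C_*\Omega M,C_*\Omega M) \ar[r]^-{\Lambda_*}_-{\isom} & \Tor_*^{C_*\Omega M}(\Ad(\Omega M),k)
  }
\]
and check that it commutes. The top row is the Rothenberg--Steenrod identification of $H_*(M)$ with $\Tor^{C_*\Omega M}(k,k)$ (which is how $[M]$ was transported to define $z_H$ in the first place), composed with $\Lambda_*^{-1}$ on trivial coefficients. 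The bottom row is the Goodwillie/Burghelea--Fiedorowicz identification followed by $\Lambda_*^{-1}$ for the coefficient bimodule $C_*\Omega M$. Commutativity of the right square is immediate from the naturality of $\Lambda_*$ in the coefficient module, applied to the $C_*\Omega M$-module map $\eta:k\to\Ad(\Omega M)$. Commutativity of the left square is the statement that the natural map $H_*(M)\to H_*(LM)$ induced by $c$ corresponds, under the bar-model identifications, to the module change from trivial coefficients to $\Ad(\Omega M)$; this is built into the cyclic-bar model for $LM$, where $M\hookrightarrow LM$ is realized by the degeneracy $[*]\to [g]$ in the cyclic bar direction.

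From this diagram, $z_H = (\Lambda_*^{-1}\circ \Tor(\eta,k))([M])$ is identified with $BFG(c_*[M])$. Applying $B$ and using Theorem~\ref{thm:bfg-iso},
\[
  B(z_H) \;=\; B\bigl(BFG(c_*[M])\bigr) \;=\; BFG\bigl(\Delta(c_*[M])\bigr).
\]
The trivial $S^1$-action on $M$ makes $c:M\to LM$ an $S^1$-equivariant map, so $\Delta\circ c_* = c_*\circ \Delta$ on $H_*(M)$. But $\Delta$ on $H_*(M)$ is induced by the action of the generator $[S^1]\in H_1(S^1)$ through the trivial action, which factors through $H_*(\mathrm{pt})\otimes H_*(M)$ and is therefore identically zero. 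Hence $\Delta(c_*[M])=0$ and $B(z_H)=0$.

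The only nontrivial step is the commutativity of the left square, i.e.\ the compatibility of the constant-loop inclusion with the algebraic map $\Tor(\eta,k)$; the right square and the $S^1$-equivariance argument are formal. This compatibility is essentially the chain-level statement that $\eta:k\to \Ad(\Omega M) = C_*\Omega M$ inserts the identity loop, and under the cyclic bar / Hochschild models this corresponds to including constant loops into $LM$, which is standard in the Burghelea--Fiedorowicz--Goodwillie framework already invoked.
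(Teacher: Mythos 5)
Your proof is correct and follows the same route as the paper's own argument: transport $z_H$ across the comparison diagram to identify it with $BFG(c_*[M])$, then use $S^1$-equivariance of the constant-loop map $c$ and triviality of the $S^1$-action on $M$ to kill $\Delta(c_*[M])$. You supply a bit more detail than the paper (which merely asserts the commutativity of the diagram) by explaining why the two squares commute — naturality of $\Lambda_*$ for the right square, and the cyclic-bar realization of $c:M\hookrightarrow LM$ for the left — but this is the same argument.
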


\subsubsection{The BV structures on \texorpdfstring{$HH^*(C_*\Omega M)$}{HH*(C*Omega M)} and String Topology}

Now that we have shown that $D$ arises as a cap product in Hochschild homology, we may employ an algebraic argument of Ginzburg~\cite{ginzburg:2006}, with sign corrections by Menichi~\cite{menichi:2009a}, to show that this gives $HH^*(C_*\Omega M)$ the structure of a BV algebra.

For any DGA $A$, the cup product on $HH^*(A)$ is graded-commutative, so the right cap-product action of $HH^*(A)$ on $HH_*(A)$ also defines a left action, with $a \cdot z = (-1)^{|a||z|} z \cap a$ for $z \in HH_*(A)$ and $a \in HH^*(A)$.  Hence, each $a \in HH^*(A)$ defines a degree-$|a|$ operator $i_a$ on $HH_*(A)$ by $i_a(z) = a \cdot z$. Then $D(a) = a \cdot z_H = i_a(z_H)$. 

Similarly, for each $a \in HH^*(A)$, there is a ``Lie derivative'' operator $L_a$ on $HH_*(A)$ of degree $|a| + 1$, and there is the Connes $B$ operator of degree $1$. It is well known that these operations make $(HH^*(A), HH_*(A))$ into a calculus, an algebraic model of the interaction of differential forms and polyvector fields on a manifold. Tamarkin and Tsygan~\cite{tamarkin-tsygan:2005} in fact extend this calculus structure to a notion of $\infty$-calculus on the Hochschild chains and cohains of $A$, which descends to the usual calculus structure on homology, and they provide explicit descriptions of the operations on the chain level. The Lie derivative in this calculus structure is the graded commutator
\[
  L_a = [B, i_a],
\]
which for $a, b \in HH^*(A)$ satisfies the relations
\[
  i_{[a,b]} = (-1)^{|a| + 1}[L_a, i_b]
  \quad \text{and} \quad
  L_{a \cup b} = L_a i_b + (-1)^{|a|} i_a L_b,
\]
where $[a,b]$ is the usual Gerstenhaber Lie bracket in $HH^*(A)$.

\begin{thm}\label{thm:bv-struct-hh}
$HH^*(C_*\Omega M)$ is a BV algebra under the Hochschild cup product and the operator $\kappa = -D^{-1}BD$. The Lie bracket induced by this BV algebra structure is the standard Gerstenhaber Lie bracket.
\begin{proof}
Recall that $D(a) = a \cdot z_H$, so $B(a \cdot z_H) = -\kappa(a) \cdot z_H$. Then
\begin{align*}
  D([a,b]) &= i_{[a,b]}(z_H) \\
  &= (-1)^{|a| + 1}(L_a i_b - (-1)^{(|a| - 1)|b|} i_b L_a)(z_H) \\
  &= (-1)^{|a| + 1}(B i_a i_b -\! (-1)^{|a|} i_a B i_b -\! (-1)^{(|a| - 1)|b|} i_b B i_a + \!(-1)^{(|a| - 1)|b| + |a|} i_b i_a B)(z_H) \\
  &= (-1)^{|a| + 1} B( (a \cup b) \cdot z_H ) + a \cdot B(b \cdot z_H) + (-1)^{|a||b| + |b| + |a|} b \cdot B(a \cdot z_H) \\
  &= (-1)^{|a|} \kappa(a \cup b) \cdot z_H - (a \cup \kappa(b)) \cdot z_H - (-1)^{|a||b| + |b| + |a|} (b \cup \kappa(a)) \cdot z_H \\
  &= ((-1)^{|a|} \kappa(a \cup b) - (-1)^{|a|} \kappa(a) \cup b - a \cup \kappa (b)) \cdot z_H
\end{align*}
so therefore
\[
  [a,b] = (-1)^{|a|} \kappa(a \cup b) - (-1)^{|a|} \kappa(a) \cup b - a \cup \kappa (b).
\]
Since $HH^*(C_*\Omega M)$ is a Gerstenhaber algebra under $\cup$ and $[\,,\,]$, this identity shows that it is a BV algebra under $\cup$ and $\kappa$.
\end{proof}
\end{thm}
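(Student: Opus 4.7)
The plan is to leverage the calculus structure on $(HH^*(A), HH_*(A))$ for $A = C_*\Omega M$, together with the two special features we have just established: that $D$ is given by $D(a) = (-1)^{|a|d}\,z_H \cap a = i_a(z_H)$, and that $z_H$ is $B$-closed, i.e.\ $B(z_H) = 0$. These two facts turn the question of whether $\kappa = -D^{-1}BD$ defines a BV operator into a purely algebraic identity among the operators $B$, $i_a$, and $L_a$ acting on the single class $z_H$. I will follow Ginzburg's strategy for promoting a calculus with a dualizing class to a BV structure, using Menichi's sign fixes.

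First I would set $\kappa = -D^{-1}BD$ and translate its defining relation into the identity $B(i_a(z_H)) = -i_{\kappa(a)}(z_H)$ in $HH_{*+d}(A)$. Next, starting from $D([a,b]) = i_{[a,b]}(z_H)$, I would substitute the calculus identity $i_{[a,b]} = (-1)^{|a|+1}[L_a, i_b]$ and then expand each $L_a = [B, i_a]$ as a graded commutator, yielding four terms in $B$, $i_a$, $i_b$, and their composites applied to $z_H$. The Leibniz-type relation $L_{a\cup b} = L_a i_b + (-1)^{|a|} i_a L_b$ will be used in parallel to rewrite $B \circ i_{a\cup b}$ on $z_H$ in a comparable form. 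Here is where $B(z_H) = 0$ enters crucially: it kills the ``outermost'' terms in which $B$ would land on $z_H$ directly, collapsing the expansion to three surviving pieces that can each be read off as $\kappa$-composites and cap-products against $z_H$.

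After applying the translation $B \circ i_{(-)}(z_H) = -i_{\kappa(-)}(z_H)$ to the surviving terms and using the $HH^*(A)$-module action $a\cdot (-) = i_a(-)$, the identity reduces to
\[
  [a,b]\cdot z_H = \bigl((-1)^{|a|}\kappa(a\cup b) - (-1)^{|a|}\kappa(a)\cup b - a\cup \kappa(b)\bigr)\cdot z_H.
\]
Since $D$ is an isomorphism (so capping with $z_H$ is injective on $HH^*(A)$), this upgrades to the equation
\[
  [a,b] = (-1)^{|a|}\kappa(a\cup b) - (-1)^{|a|}\kappa(a)\cup b - a\cup \kappa(b).
\]
This is precisely the BV identity relating a would-be BV operator $\kappa$ of degree $1$ with $\cup$ and with the Gerstenhaber bracket $[-,-]$. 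Combined with $\kappa^2 = 0$ (which follows from $B^2 = 0$ via $\kappa = -D^{-1}BD$) and the fact that $(HH^*(A), \cup, [-,-])$ is already a Gerstenhaber algebra, the identity shows that $(HH^*(A), \cup, \kappa)$ is a BV algebra whose induced bracket is the standard Gerstenhaber bracket.

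The main obstacle will be the careful sign bookkeeping in the commutator expansions: the calculus identities, the left-versus-right cap product conversion, and the shift $D(a) = (-1)^{|a|d}z_H\cap a$ each contribute Koszul signs, and the final BV formula is extremely sensitive to these. Menichi's corrections to Ginzburg indicate exactly where sign errors tend to creep in, so I would verify the signs by keeping all operators in the form $i_a$, $B$, $L_a$ with their intrinsic degrees and only at the end passing back to the cap-product notation.
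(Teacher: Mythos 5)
Your proposal is correct and takes essentially the same route as the paper's proof: recognizing that $D$ is cap product against $z_H$ and that $B(z_H)=0$, then expanding $i_{[a,b]}(z_H)$ via the calculus identities $i_{[a,b]} = (-1)^{|a|+1}[L_a,i_b]$ and $L_a = [B,i_a]$, letting $B(z_H)=0$ kill the stray term, and converting back through $B(a\cdot z_H) = -\kappa(a)\cdot z_H$ and the injectivity of $D$. The only minor deviation is your mention of the $L_{a\cup b}$ relation, which the paper's computation does not actually need; otherwise this is Ginzburg--Menichi exactly as the paper executes it.
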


\begin{thm}\label{thm:bv-isom-hh-string-top}
Under the isomorphism $BFG \circ D: HH^*(C_*\Omega M) \to H_{* + d}(LM)$, the BV algebra structure above coincides with the BV algebra structure of string topology.
\begin{proof}
We have seen that the isomorphism $HH^*(C_*\Omega M) \isom H_{* + d}(LM)$ coming from spectra coincides with the composite isomorphism $BFG \circ D$, and so the latter takes the Hoch\-schild cup product to the Chas-Sullivan loop product. Furthermore, 
\[
  BFG \circ D \circ \kappa = - BFG \circ B \circ D = - \Delta \circ BFG \circ D,
\]
so $BFG \circ D$ takes $\kappa$ to $-\Delta$, the negative of the BV operator on string topology.

Tamanoi gives an explicit homotopy-theoretic construction of the loop bracket and BV operator in string topology~\cite{tamanoi:2007b}. In his Section~5, he notes that the bracket associated to the usual $\Delta$ operator is actually the negative $-\{-,-\}$ of the loop bracket, as defined using Thom spectrum constructions. Consequently, $-\Delta$ should be the correct BV operator on $\mathbb{H}_*(LM)$, since the sign change carries through to give $\{-, -\}$ as the bracket induced from the BV algebra structure. Then the Hochschild Lie bracket $[-, -]$ does correspond to the loop bracket under this isomorphism.

We conclude that $BFG \circ D$ is an isomorphism of BV algebras from $(HH^*(C_*\Omega M), \cup, \kappa)$ to the string topology BV algebra $(H_{* + d}(LM), \circ, -\Delta)$.
\end{proof}
\end{thm}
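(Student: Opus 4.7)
The plan is to assemble three ingredients that have been established in the paper: the multiplicative part of the isomorphism, the intertwining of $B$ with the circle action, and Tamanoi's sign convention for the string topology BV operator.

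First, I would verify that $BFG \circ D$ carries the Hochschild cup product to the Chas-Sullivan loop product. This is essentially immediate from the preceding Section~\ref{sec:ring-structs}: the spectrum-level argument identifies $LM^{-TM}$, $F_{\Omega M}(E\Omega M_+, S[\Omega M^c])$, and $THH_S(S[\Omega M])$ as equivalent ring spectra, and smashing with $Hk$ and passing to the derived category shows that the chain-level equivalences underlying $D$ and the Goodwillie isomorphism $BFG$ assemble into an isomorphism of algebras-up-to-homotopy. Hence on homology $BFG\circ D$ takes $\cup$ to $\circ$.

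Next, I would compute how $BFG\circ D$ interacts with the BV operators. By definition $\kappa = -D^{-1}BD$, so
\[
  BFG \circ D \circ \kappa = -BFG \circ B \circ D = -\Delta \circ BFG \circ D,
\]
where the last equality is the Goodwillie identity $BFG \circ B = \Delta \circ BFG$ from Theorem~\ref{thm:bfg-iso}. Thus $BFG\circ D$ sends $\kappa$ to $-\Delta$. Combined with the multiplicative statement above, this establishes an isomorphism of structures $(HH^*(C_*\Omega M), \cup, \kappa) \to (\mathbb{H}_*(LM), \circ, -\Delta)$.

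The main obstacle, and the only subtle point, is reconciling the sign: \emph{a priori} one might expect the string topology BV operator to be $+\Delta$ rather than $-\Delta$. Here I would invoke Tamanoi's explicit homotopy-theoretic construction~\cite{tamanoi:2007b} of the loop bracket from Thom-spectrum umkehr maps, in which Tamanoi observes that the bracket derived from the usual $+\Delta$ operator is the negative $-\{-,-\}$ of the Chas-Sullivan loop bracket. Consequently $-\Delta$ is the operator on $\mathbb{H}_*(LM)$ whose associated Gerstenhaber bracket is the loop bracket itself, so $(\mathbb{H}_*(LM), \circ, -\Delta)$ is the string topology BV algebra in the convention in which the induced bracket recovers $\{-,-\}$. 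Since Theorem~\ref{thm:bv-struct-hh} already identifies the bracket induced by $(\cup, \kappa)$ with the Gerstenhaber bracket $[-,-]$, the chain of identifications shows that $BFG\circ D$ is an isomorphism of BV algebras, completing the proof.
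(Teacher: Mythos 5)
Your proposal matches the paper's own argument step for step: citing the spectrum-level ring equivalences for the multiplicative statement, computing $BFG \circ D \circ \kappa = -\Delta \circ BFG \circ D$ via the Goodwillie intertwining $BFG \circ B = \Delta \circ BFG$, and then invoking Tamanoi's observation that the bracket induced by $+\Delta$ is $-\{-,-\}$ to justify $-\Delta$ as the correct BV operator. This is the same proof.
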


We also compare this result to the previous BV algebra isomorphisms between string topology and Hochschild homology. We note that Vaintrob's argument in~\cite{vaintrob:2007} relies on Ginzburg's algebraic argument without Menichi's sign corrections. With those sign changes in place, the argument appears to carry through to produce $-D^{-1}BD$ as the appropriate BV operator on Hochschild cohomology, and thus to give $-\Delta$ as the BV operator on string topology.

As noted above, Felix and Thomas also construct a BV algebra isomorphism between $\mathbb{H}_*(LM)$ and $HH^*(C^*M)$ when $M$ is simply connected and when $k$ is a field of characteristic $0$~\cite{felix-thomas:2007}. They invoke results of Menichi on cyclic cohomology~\cite{menichi:2001} and of Tradler and Zeinalian~\cite{tradler:2008b,tradler-zeinalian-sullivan:2007} to state that their BV operator on $HH^*(C^*M)$ induces the Gerstenhaber Lie bracket. In light of the sign change above and the isomorphism of Gerstenhaber algebras $HH^*(C^*M) \isom HH^*(C_*\Omega M)$ of Felix, Menichi, and Thomas for $M$ simply connected, it would be of interest to trace through these isomorphisms to check the sign of the induced bracket in their context.

\appendix

\section{Algebraic Structures}\label{app:algebra}

\subsection{Hopf Algebras and Adjoint Actions}\label{ssec:hopf-algs-adjoints}

\subsubsection{Conventions for Chain Complexes and Modules}

The symmetry map $\tau$ in $\Ch(k)$ introduces a Koszul sign:

\begin{defn}
For $A, B$ chain complexes, define the algebraic twist map $\tau_{A, B}: A \otimes B \to B \otimes A$ by $\tau_{A, B}(a \otimes b) = (-1)^{|a| |b|} b \otimes a$. If $A, B$ are clear from context, $\tau_{A, B}$ is written $\tau$.
\end{defn}

We use the following notation when using $\tau$ to permute several factors in a tensor product of complexes (or Cartesian product of spaces):

\begin{notation}\label{sec:twist-notation}
Suppose that $\sigma \in S_n$ is a permutation on $n$ letters $\{ 1, \dotsc, n \}$. Denote by $\tau_{n, \sigma}$, or $\tau_{\sigma}$ if $n$ is understood, the unique morphism $X_1 \otimes \dotsb \otimes X_n \to X_{\sigma^{-1}(1)} \otimes \dotsb \otimes X_{\sigma^{-1}(n)}$ composed of the $\tau_{X_i, X_j}$ and taking the $i$th factor in the source to the $\sigma(i)$th factor in the target. 
\end{notation}

Consequently, for $\rho, \sigma \in S_n$, $\tau_{\rho} \circ \tau_{\sigma} = \tau_{\rho \sigma}$.

We occasionally use \term{Sweedler notation}~\cite{sweedler:1969} when working with the coproduct of a differential graded coalgebra.

\begin{notation}
Suppose $C$ is a DGC, and take $c \in C$. We write the coproduct of $c$ as $\Delta(c) = \sum_c c^{(1)} \otimes c^{(2)}$, with the index $c$ indicating a sum over the relevant summands of $\Delta(c)$. By the coassociativity of $\Delta$, 
\[
  (\id \otimes \Delta)(\Delta(c)) = \sum_c c^{(1)} \otimes c^{(2,1)} \otimes c^{(2,2)}
  = \sum_c c^{(1,1)} \otimes c^{(1,2)} \otimes c^{(2)} = (\Delta \otimes \id)(\Delta(c)).
\]
We instead denote this twice-iterated coproduct unambiguously as $\Delta^2(c) = \sum_c c^{(1)} \otimes c^{(2)} \otimes c^{(3)}$. Higher iterates $\Delta^n(c)$ are denoted similarly, with components $c^{(1)}, \dotsc, c^{(n + 1)}$.
\end{notation}

\begin{exmp}
In Sweedler notation, the counital condition $\id = \rho(\id \otimes \epsilon)\Delta = \lambda(\epsilon \otimes \id)\Delta$ becomes
\[
  c = \sum_c c^{(1)}\epsilon(c^{(2)}) = \sum_c \epsilon(c^{(1)})c^{(2)}.
\]
\end{exmp}

We also introduce notation for ``pullback'' module functors.

\begin{notation}\label{notation:pullback-modules}
If $\phi: B \to A$ is a morphism of DGAs, then $\phi^*: \modulecategory{A} \to \modulecategory{B}$ is the functor taking an $A$-module to a $B$-module with action by $\phi$.
\end{notation}

Recall that $\phi^*$ is an exact functor. We will not distinguish between pullback functors for left and right modules, and instead will recognize them from context.

\subsubsection{Adjoint Actions}

Suppose that $A$ is a differential graded Hopf algebra with an algebra anti-automorphism $S: A \to A$, so that $S: A \to A^{\op}$ is an isomorphism of DGAs. Also assume that $S^2 = \id$.

\begin{defn}\label{defn:adjoint-maps}
For $\epsilon = 0, 1$, define $\ad_{\epsilon} = (\id \otimes S) \tau^\epsilon \Delta: A \to A \otimes A^{\op}$. Note that $\ad_0$ and $\ad_1$ are both DGA morphisms, since $\Delta$ and $\tau$ are. 
\end{defn}

Then if $M$ is an $A$-$A$-bimodule, it is canonically both a left and a right $A^e$-module, and so by pullback $\ad_0^* M$ and $\ad_1^* M$ are (possibly distinct) $A$-modules.

\subsubsection{Bar Resolution Isomorphisms}

Suppose now that $S$ is an antipode for $A$. Then the coinvariant module $k \otimes_A \ad^* A^e$ is isomorphic to $A$ as $A^e$-modules.

\begin{prop}\label{prop:dgh-env-alg-isoms}
If $A$ is a DGH with antipode $S$, then $\phi: \ad_0^*A^e \otimes_A k \to A$ given by $(a \otimes a') \otimes \lambda \mapsto \lambda aa'$ is an isomorphism of left $A^e$-modules, and $\phi': k \otimes_A \ad_1^*A^e \to A$ given by $\lambda \otimes (a \otimes a') = (-1)^{|a||a'|} \lambda a'a$ is an isomorphism of right $A^e$-modules. 
\end{prop}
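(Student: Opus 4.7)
The plan is to construct an explicit inverse map $\psi: A \to \ad_0^*A^e \otimes_A k$ by $\psi(x) = (x \otimes 1) \otimes 1$, and an analogous right-handed version for $\phi'$, and to verify they are two-sided inverses. First I would check that $\phi$ is well-defined on the balanced tensor product and is left $A^e$-linear: the right $A$-action on $\ad_0^* A^e$ is given by $(x \otimes y) \cdot b = \sum_b (x \otimes y)(b_{(1)} \otimes S(b_{(2)}))$, and composing with the multiplication map $\mu$ to $A$ (which is how $\phi$ is built) produces $\sum_b xb_{(1)} S(b_{(2)}) y = \epsilon(b) xy$ by the antipode axiom $\mu(\id \otimes S)\Delta = \eta\epsilon$, matching the balancing. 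Thus $\phi\psi = \id_A$ is tautological.

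The content of the proof is therefore $\psi\phi = \id$, equivalent to the identity $(a \otimes b) \otimes 1 = (ab \otimes 1) \otimes 1$ in $\ad_0^* A^e \otimes_A k$. Using $(a \otimes b) = (a \otimes 1)(1 \otimes b)$ in $A^e$ and left $A^e$-linearity, this reduces to the single case $(1 \otimes b) \otimes 1 = (b \otimes 1) \otimes 1$. The starting observation is that for every $c \in A$, the element $\ad_0(c) - \epsilon(c)(1 \otimes 1) \in A^e$ maps to zero in $\ad_0^* A^e \otimes_A k$, since by the balancing $(1 \otimes 1) \cdot c \otimes 1 = \epsilon(c)(1 \otimes 1) \otimes 1$. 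Left-multiplying these basic relations by elements of $A^e$ yields a family of identifications in the quotient sufficient to equate $(1 \otimes b)$ with $(b \otimes 1)$. A structurally cleaner route is to recognize the classical Hopf--Galois translation map $T: A \otimes A \to A \otimes A$ defined by $T(a \otimes b) = \sum_b ab_{(1)} \otimes S(b_{(2)})$, which is a $k$-linear bijection with inverse built from the antipode and from the 3-term Sweedler identity $\sum_b b_{[1]} \otimes S(b_{[2]}) b_{[3]} = b \otimes 1$; $T$ intertwines the $\ad_0$-action with right multiplication on the second tensor factor, and passage to coinvariants identifies the quotient with $A$ via $\phi$.

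The argument for $\phi'$ proceeds in parallel with $\ad_1 = (\id \otimes S)\tau\Delta$ in place of $\ad_0$; the extra transposition $\tau$ accounts precisely for the Koszul sign $(-1)^{|a||a'|}$ recorded in the definition of $\phi'$. The main obstacle is the bookkeeping of Koszul signs arising from the graded tensor structure on $A^e = A \otimes A^{\op}$ and from the two coassociative presentations $(\Delta \otimes \id)\Delta = (\id \otimes \Delta)\Delta$ of $\Delta^2$ used in any direct expansion of $(1 \otimes b_{(2)}) \cdot b_{(1)}$; but this is essentially mechanical, and the underlying algebraic content reduces in both cases to the standard Hopf-algebraic fact that the translation map $T$ is invertible whenever an antipode is present.
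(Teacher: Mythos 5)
Your proof is correct in substance and proceeds by a genuinely different route than the paper's. The paper does not prove Proposition~\ref{prop:dgh-env-alg-isoms} directly; rather, it first establishes the simplicial isomorphisms of bar constructions in Proposition~\ref{prop:dgh-complex-isos} (for example $\gamma^{L,0}_\bullet: B_\bullet(A,A,A) \to B_\bullet(\ad_0^* A^e, A, k)$), and then observes that $\ad_0^* A^e \otimes_A k$ and $A$ are the respective cokernels of $d_0 - d_1$ in simplicial degrees $1 \to 0$, so the degree-0 level of the simplicial isomorphism descends to the stated isomorphism $\phi$. Your approach works entirely at the module level: you exhibit the candidate inverse $\psi(x) = (x \otimes 1) \otimes 1$, verify well-definedness and $A^e$-linearity of $\phi$ via the antipode axiom $\mu(\id \otimes S)\Delta = \eta\epsilon$, and then reduce $\psi\phi = \id$ to the single identity $(1 \otimes b) \otimes 1 = (b \otimes 1) \otimes 1$, which follows from the balancing relation $\ad_0(c) \otimes 1 = \epsilon(c)(1 \otimes 1) \otimes 1$ together with the three-term identity $\sum_b b^{(1)} \otimes S(b^{(2)}) b^{(3)} = b \otimes 1$. (Concretely: writing $b = \sum_b \epsilon(b^{(1)}) b^{(2)}$ and applying balancing with $c = b^{(1)}$ gives $(a \otimes b) \otimes 1 = \sum_b (a b^{(1)} \otimes S(b^{(2)}) b^{(3)}) \otimes 1 = (ab \otimes 1) \otimes 1$.) What the paper's approach buys is a stronger statement, the full zig-zag of resolution-level isomorphisms, which is precisely what is needed later (e.g.\ to feed Proposition~\ref{prop:top-bar-group-homeo} and the Hochschild-to-$\Tor$ comparisons), whereas your argument is more elementary and self-contained. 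The invocation of the Hopf--Galois translation map $T$ is a nice structural framing but is not strictly necessary given the direct computation; and you are right that the only remaining content is Koszul-sign bookkeeping from $A^e = A \otimes A^{\op}$ being a graded tensor product, which is routine.
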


In fact, these isomorphisms are induced from isomorphisms of $A^e$-resolutions for these modules, which we state below.

\begin{prop}\label{prop:dgh-complex-isos}
Let $A$ be a DGH with antipode $S$. There are simplicial isomorphisms 
\begin{align*}
	& \gamma^{L, 0}_\bullet: B_\bullet(A,A,A) \leftrightarrows B_\bullet(\ad_0^*A^e, A, k) : \phi^{L, 0}_n \\
	& \gamma^{R, 1}_\bullet: B_\bullet(A,A,A) \leftrightarrows B_\bullet(k, A, \ad_1^* A^e): \phi^{R, 1}_n
\end{align*}
which descend to isomorphisms on the corresponding realizations. When $S^2 = \id$, there are isomorphisms $(\gamma^{L, 1}_\bullet, \phi^{L, 1}_\bullet)$ and $(\gamma^{R, 0}_\bullet, \phi^{R, 0}_\bullet)$ in the opposite $\epsilon$-cases as well.
\begin{proof}
We first exhibit an isomorphism $\gamma^{L,0}_\bullet: B_\bullet(A, A, A) \to B_\bullet(\ad_0^*A^e, A, k)$ and its inverse:
\begin{align*}
  \gamma^{L, 0}_n(a[a_1\mid \dotsb\mid a_n]a') &= \pm (a \otimes (a_1\dotsm a_n)^{(2)} a')[a_1^{(1)} \mid \dotsb \mid a_n^{(1)}]  \\
  \phi^{L, 0}_n((b \otimes b')[b_1\mid \dotsb\mid b_n]) &= \pm b[b_1^{(1)} \mid \dotsb \mid b_n^{(1)}] S((b_1 \dotsm b_n)^{(2)}) b'.
\end{align*}
It is straightforward to verify that these are isomorphisms of simplicial $A^e$-modules and thus determine isomorphisms of the associated bar complexes.

Next, we show an isomorphism $\gamma^{R,1}_\bullet: B_\bullet(A, A, A) \to B_\bullet(k, A, \ad_1^* A^e)$ and its inverse:
\begin{align*}
  \gamma^{R, 1}_n(a'[a_1\mid \dotsb\mid a_n]a) &= \pm [a_1^{(2)} \mid \dotsb \mid a_n^{(2)}](a \otimes a'S((a_1\dotsm a_n)^{(1)}))  \\
  \phi^{R, 1}_n([b_1\mid \dotsb\mid b_n](b \otimes b')) &= \pm b'(b_1 \dotsm b_n)^{(1)}[b_1^{(2)} \mid \dotsb \mid b_n^{(2)}] b
\end{align*}

When $S^2 = \id$, we have the isomorphisms
\begin{align*}
  \gamma^{L, 1}_n(a[a_1\mid \dotsb\mid a_n]a') &= \pm (a \otimes (a_1\dotsm a_n)^{(1)} a')[a_1^{(2)} \mid \dotsb \mid a_n^{(2)}]  \\
  \phi^{L, 1}_n((b \otimes b')[b_1\mid \dotsb\mid b_n]) &= \pm b[b_1^{(2)} \mid \dotsb \mid b_n^{(2)}] S((b_1 \dotsm b_n)^{(1)}) b',
\end{align*}
which assemble to an isomorphism $\gamma^{L, 1}_\bullet: B_\bullet(A, A, A) \to B_\bullet(\ad_1^* A^e, A, k)$, and isomorphisms
\begin{align*}
  \gamma^{R, 0}_n(a'[a_1\mid \dotsb\mid a_n]a) &= \pm [a_1^{(1)} \mid \dotsb \mid a_n^{(1)}](a \otimes a'S((a_1\dotsm a_n)^{(2)}))  \\
  \phi^{R, 0}_n([b_1\mid \dotsb\mid b_n](b \otimes b')) &= \pm b'(b_1 \dotsm b_n)^{(2)}[b_1^{(1)} \mid \dotsb \mid b_n^{(1)}] b,
\end{align*}
which produce an isomorphism $\gamma^{R, 0}_\bullet: B_\bullet(A, A, A) \to B_\bullet(k, A, \ad_0^* A^e)$.
\end{proof}
\end{prop}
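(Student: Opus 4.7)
The plan is to construct explicit simplicial maps in each direction, built from $\Delta$ and $S$, and verify they are mutually inverse. The guiding intuition is that pulling back the right $A^e$-action on $A^e$ along $\ad_\epsilon = (\id \otimes S)\tau^\epsilon\Delta$ entangles the adjoint slot with the neighbouring bar degree via $\Delta$ and $S$; a natural simplicial isomorphism should ``disentangle'' this by Sweedler-splitting each interior bar factor, sending one Sweedler component back to the bar degree and absorbing the other into the outer $A^{\op}$-slot. The antipode identity $\mu(\id \otimes S)\Delta = \eta\epsilon$ is then exactly what makes the candidate inverse collapse to the identity.

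For the $(L, 0)$ case I would define $\gamma^{L, 0}_n$ by splitting each interior $a_i$ via $\Delta$, keeping the first Sweedler components $a_i^{(1)}$ in bar degree and pushing the product $a_1^{(2)} \dotsm a_n^{(2)}$ into the $A^{\op}$-slot together with $a'$; the inverse $\phi^{L, 0}_n$ Sweedlers each bar factor on the opposite side, applies $S$ to the product of second components, and multiplies into the outer factor. The $(R, 1)$ case is symmetric, with left and right exchanged: the transposition $\tau$ inside $\ad_1$ is absorbed into the choice of which outer factor gets pushed into $A^e$. Once the formulas are in hand, verification splits into three standard steps: invertibility via coassociativity, the antipode identity, and the counit axiom; $A^e$-equivariance, which is the definition of $\ad_\epsilon$ read backwards; and simplicial compatibility, where inner faces multiply adjacent $a_i a_{i+1}$ and commute with the Sweedler operation because $\Delta$ is an algebra map, extreme faces reduce to the equivariance check, and degeneracies follow from $(\eta \otimes \id)\Delta = \id$ and its right analogue.

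The main obstacle is Koszul-sign bookkeeping. Every Sweedler split in the DG setting is a sum over pairs of components that can carry different degrees, and every transposition needed to match the source and target tensor orderings introduces a sign. Getting the $\pm$'s in $\gamma$ and $\phi$ to match so that the composites collapse to the strict identity, rather than up to a graded correction, is the finicky part; the structural outline, by contrast, is essentially dictated by the Hopf-algebra axioms. Since the maps $\gamma, \phi$ are $A^e$-equivariant and simplicial at each level, they descend automatically to isomorphisms on both the thick and the usual realisations $B$ and $\bar{B}$, so no further work is needed for the descent claim.

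For the two remaining cases $(L, 1)$ and $(R, 0)$, the obvious attempt is to swap the roles of the Sweedler indices $(1)$ and $(2)$ throughout the $(L, 0)$ and $(R, 1)$ formulas. The invertibility calculation then produces an expression of the form $\sum S(y^{(2)}) y^{(1)}$, which is not directly the left-hand side of either standard antipode axiom. Under the hypothesis $S^2 = \id$, however, one has $\sum S(y^{(2)}) y^{(1)} = \sum S(y^{(2)}) S^2(y^{(1)}) = S\bigl(\sum S(y^{(1)}) y^{(2)}\bigr) = S(\epsilon(y) \cdot 1) = \epsilon(y) \cdot 1$, using that $S$ is an anti-algebra map with $S(1) = 1$. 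This single identity is where the extra hypothesis enters, and with it the $(L, 1)$ and $(R, 0)$ constructions go through by exactly the same argument as in the $(L, 0)$ case.
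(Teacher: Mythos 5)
Your proposal matches the paper's proof: the same explicit Sweedler-split formulas for $\gamma^{L,0}$, $\phi^{L,0}$, $\gamma^{R,1}$, $\phi^{R,1}$ (and the index-swapped versions for $(L,1)$, $(R,0)$), with mutual inverseness reduced to the antipode identity, $A^e$-equivariance to the definition of $\ad_\epsilon$, and simplicial compatibility to $\Delta$ being an algebra map and counital. You additionally make explicit why $S^2 = \id$ is needed — namely to obtain the ``wrong-sided'' identity $\sum S(y^{(2)})y^{(1)} = \epsilon(y)\cdot 1$ — which the paper states but does not explain.
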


Relating these simplicial isomorphisms back to Prop.~\ref{prop:dgh-env-alg-isoms}, note that, for example, that $\ad_0^* A^e \otimes_A k$ is the cokernel of $d_0 - d_1: B_1(\ad_0^* A^e, A, k) \to B_0(\ad_0^* A^e, A, k)$, and that $A$ is the cokernel of $d_0 - d_1: B_1(A, A, A) \to B_0(A, A, A)$. Hence, the simplicial isomorphisms induces isomorphisms on these cokernels.

In fact, these simplicial isomorphisms hold for a Hopf object $H$ in an arbitrary symmetric monoidal category $(\mathcal{C}, \otimes, I)$, with a monoid anti-automorphism $S: H \to H$. For example, we apply this result to a topological group $G$ considered as a Hopf object in the category $\Top$ in Proposition~\ref{prop:top-bar-group-homeo}.

\subsection{\texorpdfstring{$A_\infty$}{A-infinity} Algebras and Modules}\label{sec:a-infty-algs}

\subsubsection{\texorpdfstring{$A_\infty$}{A-infinity} Algebras and Morphisms}

We recall briefly from Keller~\cite{keller:2001} the fundamental notions of such algebras and their modules, although we treat chain complexes homologically instead of cohomologically and therefore must reverse the signs of some degrees.

\begin{defn}\label{defn:a-infty-alg}
An $A_\infty$-algebra over $k$ is a graded $k$-module $A_*$ with a sequence of graded $k$-linear maps $m_n: A^{\otimes n} \to A$ of degree $n - 2$ for $n \geq 1$. These maps satisfy the quadratic relations
\[
  \sum_{\substack{r + s + t = n \\ s \geq 1}} (-1)^{r + st} m_{r + 1 + t}(\id^{\otimes r} \otimes m_s \otimes \id^{\otimes t}) = 0
\]
for $n \geq 1$, where $r, t \geq 0$ and $s \geq 1$.
\end{defn}

The first of these relations, $m_1 m_1 = 0$, shows that $m_1$ is a differential, making $A$ a chain complex. The second relation rearranges to
\[
  m_1 m_2 = m_2(m_1 \otimes \id + \id \otimes m_1),
\]
shows that $m_2: A \otimes A \to A$ is a chain map with respect to the differential $m_1$. The third identity rearranges to
\[
  m_2( \id \otimes m_2 - m_2 \otimes \id) = m_1 m_3 + m_3(m_1 \otimes \id^{\otimes 2} + \id \otimes m_1 \otimes \id + \id^{\otimes 2} \otimes m_1),
\]
which shows that $m_2$ is associative only up to chain homotopy, with $m_3$ the homotopy between the two different $m_2$ compositions. The higher relations then describe additional homotopy coherence data for the $m_n$ maps. Such data also describe a degree-$(-1)$ coderivation $b$ of the DGC $B(k, A, k)$ with $b^2 = 0$; for more details on both of these perspectives, see \cite[\S3]{keller:2001}. 

A differential graded algebra $A$ determines an $A_\infty$-algebra with $m_1 = d$, the differential of $A$, $m_2 = \mu$, and $m_n = 0$ for $n \geq 3$. Conversely, any $A_\infty$-algebra with $m_n = 0$ for $n \geq 3$ is a DGA. All of the $A_\infty$-algebras we consider will actually be DGAs.  Likewise, there is a notion of a morphism of $A_\infty$-algebras, but any morphism we consider between these DGAs will be an ordinary morphism of DGAs. In the coalgebra framework, a morphism of $A_\infty$-algebras $A \to A'$ is equivalent to a morphism of DGCs $B(k, A, k) \to B(k, A', k)$. 

\subsubsection{\texorpdfstring{$A_\infty$}{A-infinity} Modules and Morphisms}

We turn to the definition of modules over $A_\infty$ algebras and their morphisms.

\begin{defn}\label{defn:a-infty-module}
A (left) $A_\infty$-module over an $A_\infty$-algebra $A$ is a graded $k$-module $M$ with action maps $m^M_n:  A^{\otimes(n - 1)} \otimes M \to M$ of degree $n - 2$ for $n \geq 1$, satisfying the same relation as in Definition~\ref{defn:a-infty-alg}, with the $m_j$ replaced with $m^M_j$ where appropriate.
\end{defn}

This definition is equivalent to giving a degree-$(-1)$ differential $b_M$ with $b_M^2 = 0$ compatible with the left $B(k, A, k)$-comodule structure on $B(k, A, M)$. If $A$ is a DGA and $M$ is an ordinary $A$-module, then setting $m^M_1 = d_M$, $m^M_2 = a_M$, and $m^M_n = 0$ for $n \geq 3$ gives $M$ the structure of an $A_\infty$-module for $A$. All of the $A_\infty$-modules we consider will arise this way. 

We do need to consider morphisms of $A_\infty$-module which do not arise from morphisms of ordinary modules, however.

\begin{defn}\label{defn:a-infty-module-morphism}
Let $L, M$ be $A_\infty$-modules for an $A_\infty$-algebra $A$. A morphism $f: L \to M$ of $A_\infty$-modules over $A$ is a sequence of maps $f_n: A^{\otimes n - 1} \otimes L \to M$ of degree $n - 1$ satisfying the relations
\[
  \sum_{\substack{r + s + t = n \\ s \geq 1}} (-1)^{r + st} f_{r + t + 1} (\id^{\otimes r} \otimes m_s \otimes \id^{\otimes t}) 
  = \sum_{\substack{r + s = n \\ s \geq 1}} m_{r + 1} (\id^{\otimes r} \otimes f_s),
\]
for $n \geq 1$, where the $m_i$ represent the multiplication maps for the $A_\infty$-algebra $A$ or the action maps for $L$ and $M$.
\end{defn}

This definition is equivalent to specifying a morphism of DG comodules $B(L, A, k) \to B(M, A, k)$. While this perspective is convenient for more theoretical work, the explicit form of the maps and relations above is more suitable for checking that a proposed map is a morphism of modules.  

When $A$ is a DGA and $L$ and $M$ are $A$-modules, the $m_i$ vanish for $i \geq 3$, and we obtain the simplified relations $d_M f_1 = f_1 d_L$ and
\begin{multline*}
  d_M f_n + (-1)^n f_n d_{A^{\otimes n - 1} \otimes L} \\
  = - a_M (\id \otimes f_{n - 1}) + \sum_{r = 0}^{n - 3} (-1)^r f_{n - 1} (\id^{\otimes r} \otimes \mu \otimes \id^{\otimes n - r - 2}) + (-1)^{n - 2} f_{n - 1} (\id^{\otimes n - 2} \otimes a_L)
\end{multline*}
for $n \geq 2$. In this case, $f_1$ is a chain map $L \to M$ which commutes with the action of $A$ only up to a prescribed homotopy, $f_2$. Each subsequent $f_{n + 1}$ gives a homotopy between different ways of interleaving $f_n$ with the action of $n - 1$ copies of $A$. We will use these concepts in Section~\ref{sec:hh-adjoint-comp} when comparing different adjoint module structures over $C_*G$.

Gugenheim and Munkholm~\cite{gugenheim-munkholm:1974} note that $\Tor$ exhibits functoriality with respect to such morphisms, and Keller~\cite{keller:2001} notes that this functoriality also $\Ext_A^*(-, -)$ also exhibits such extended functoriality. We state the form of the results we need below:

\begin{prop}\label{prop:a-infty-extended-functoriality}
Let $A$ be a DGA, and let $L, M, N$ be $A$-modules. Suppose that $A$, $L$, and $M$ are all cofibrant as chain complexes of $k$-modules. Then a morphism of $A_\infty$-modules $f: L \to M$ induces maps
\[
  \Tor^A_*(N, f): \Tor^A_*(N, L) \to \Tor^A_*(N, M)
  \quad \text{and} \quad
  \Ext_A^*(N, f): \Ext_A^*(N, L) \to \Ext_A^*(N, M).
\]
If the chain map $f_1: L \to M$ is a quasi-isomorphism, these induced maps on $\Tor$ and $\Ext$ are also isomorphisms.
\begin{proof}
Since $f$ is an $A_\infty$-module morphism, it induces a morphism of $B(k, A, k)$-comodules $B(k, A, L) \to B(k, A, M)$. Since $B(A, A, L)$ can be described as the cotensor product
\[
  B(A, A, k) \square^{B(k, A, k)} B(k, A, L),
\]
this morphism of comodules induces a chain map
\[
  B(A, A, L) \to B(A, A, M).
\]
Since $A$, $L$, and $M$ are cofibrant over $k$, these bar constructions provide cofibrant replacements for $L$ and $M$ as $A$-modules. Applying the functors $N \otimes_A -$ and $\Hom_A(QN, -)$ and passing to homology then induces the desired maps on $\Ext$ and $\Tor$.

If $f_1$ is a quasi-isomorphism, then $B(A, A, f)$ is a weak equivalence between cofibrant $A$-modules and is therefore a homotopy equivalence. It therefore induces isomorphisms in $\Ext$ and $\Tor$.
\end{proof}
\end{prop}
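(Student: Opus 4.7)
My plan is to exploit the correspondence between $A_\infty$-modules over $A$ and differential graded comodules over the bar coalgebra $B(k,A,k)$ that is summarized just above the statement. Under this correspondence, the data of an $A_\infty$-module morphism $f: L \to M$ is equivalent to a genuine morphism $\tilde{f}: B(k,A,L) \to B(k,A,M)$ of DG left $B(k,A,k)$-comodules; the higher components $f_n$ are absorbed into the single comodule coherence condition. This converts highly structured data into an ordinary morphism in an ordinary module category, where standard homological manipulations apply.

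From $\tilde{f}$ I would build an induced chain map of two-sided bar constructions. The identification $B(A,A,L) \isom B(A,A,k) \square^{B(k,A,k)} B(k,A,L)$ exhibits $B(A,A,-)$ as the result of cotensoring the fixed right $B(k,A,k)$-comodule $B(A,A,k)$ (which also carries a left $A$-module structure from its leftmost factor) against the second argument. Cotensoring $B(A,A,k)$ with $\tilde{f}$ yields a map $B(A,A,f): B(A,A,L) \to B(A,A,M)$ which is automatically $A$-linear, because the $A$-action lives on a tensor factor untouched by $\tilde{f}$. The hypothesis that $A$, $L$, and $M$ are all cofibrant in $\Ch(k)$ then ensures, via the standard semifree/cellular filtration argument, that $B(A,A,L)$ and $B(A,A,M)$ are cofibrant $A$-modules weakly equivalent to $L$ and $M$, so they compute $\Tor^A_*(N,-)$ and $\Ext_A^*(N,-)$. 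Applying $N \otimes_A -$ and $\Hom_A(QN,-)$ to $B(A,A,f)$ and taking homology therefore produces the advertised induced maps $\Tor^A_*(N,f)$ and $\Ext_A^*(N,f)$.

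For the second assertion, suppose that $f_1: L \to M$ is a quasi-isomorphism. I would show that $B(A,A,f)$ is itself a quasi-isomorphism by filtering $B(A,A,-)$ by simplicial wordlength. The associated graded pieces are of the form $A \otimes A^{\otimes n} \otimes (-)$, and $k$-cofibrancy of $A$ combined with the fact that $f_1$ is a quasi-isomorphism of $k$-complexes makes the induced map an isomorphism on $E^1$; since the filtration is exhaustive and bounded below in each total degree the spectral sequence converges strongly, forcing $B(A,A,f)$ to be a quasi-isomorphism. As this is a weak equivalence between cofibrant $A$-modules, Proposition~\ref{prop:cof-pres-wk-equivs} implies that $N \otimes_A B(A,A,f)$ and $\Hom_A(QN, B(A,A,f))$ are weak equivalences of chain complexes, so they induce the claimed isomorphisms on $\Tor$ and $\Ext$. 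The step I expect to take the most care is verifying that the map $B(A,A,f)$ arising abstractly from the cotensor construction genuinely coincides with, and is $A$-linear under, the sign-laden explicit formula one would write by unpacking the coherence conditions of Definition~\ref{defn:a-infty-module-morphism}; once this identification is in hand, the rest is routine homological algebra of bar constructions.
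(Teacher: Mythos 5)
Your proposal follows essentially the same route as the paper: convert the $A_\infty$-morphism $f$ into a $B(k,A,k)$-comodule map, cotensor with $B(A,A,k)$ to obtain an $A$-linear map $B(A,A,L) \to B(A,A,M)$ between cofibrant replacements, and apply $N \otimes_A -$ and $\Hom_A(QN,-)$. Your word-length filtration argument simply fills in the detail behind the paper's assertion that $B(A,A,f)$ is a weak equivalence when $f_1$ is a quasi-isomorphism, so the two proofs coincide in substance.
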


\bibliographystyle{plain}
\bibliography{math-master}

\end{document}